\numberwithin{equation}{section}
\theoremstyle{plain}
\newtheorem{prop}{Proposition}[section]
\newtheorem{coro}[prop]{Corollary}
\newtheorem{lemm}[prop]{Lemma}
\newtheorem{theorem}[prop]{Theorem}
\newtheorem*{thrmA}{Theorem A}
\newtheorem*{thrmB}{Theorem B}
\newtheorem*{thrmC}{Theorem C}
\theoremstyle{definition}
\newtheorem{defi}[prop]{Definition}
\newtheorem{exam}[prop]{Example}
\renewcommand\aa{a}
\newcommand\bb{b}
\newcommand\cc{c}
\newcommand{\chart}{\mathsf{char}}
\newcommand\D[1]{|#1|}
\newcommand\da{associative differential commutative algebra}
\newcommand\adsa{associative differential supercommutative algebra}
\newcommand\dd{d}
\newcommand\DDX{\oD_\mathsf{s}^{0}\NB\XX}
\newcommand\DX{\oD_\mathsf{s}\NB\XX}
\newcommand\fdsa{\kk_{\mathsf{s}}\{\XX\}}
\newcommand\ff{f}
\newcommand\ffb{\lm{\ff}} 
\newcommand\fns[1]{\mathsf{GDN}_{\mathsf{s}}(#1)}
\renewcommand\geq{\geqslant}
\newcommand\HS[1]{\leavevmode\null\hspace{#1mm}}
\newcommand\ie{{\it i.e.}}
\newcommand\ii{i}
\newcommand\id{\mathsf{Id}}
\newcommand\Id[1]{\id(#1)}
\newcommand\IdD[1]{\id_{D}[#1]}
\newcommand\IDD[1]{\id^{0}_{D}[#1]}
\newcounter{ITEM}
\newcommand\ITEM[1]{\setcounter{ITEM}{#1}\leavevmode\hbox{\rm(\roman{ITEM})}}
\newcommand\jj{j}
\newcommand\kk{k}
\newcommand\kds[1]{\kk\oD_{\mathsf{s}}\NB{\XX |#1}}
\newcommand\kdds[1]{\kk\oD_{\mathsf{s}}^{0}\NB{\XX |#1}}
\renewcommand\leq{\leqslant}
\newcommand\lcoe[1]{\mathsf{lc}(#1)}
\renewcommand\ll{l}
\newcommand\lnormed[2]{[#1\wdots#2]{_{_\mathsf{L}}}}
\newcommand\lm[1]{\overline{#1}}
\newcommand\LSUB{_{\mathsf{L}}}
\newcommand\mA{\mathcal{A}}
\newcommand\mm{m}
\newcommand\mB{\mathcal{B}}
\newcommand\mV{\mathcal{V}}
\newcommand\mr[1]{\mathsf{r}(#1)}
\newcommand\nn{n}
\newcommand\ns[2]{\mathsf{GDN}_{\mathsf{s}}(\XX|\SS)}
\newcommand{\nsup}{GDN superalgebra}
\newcommand{\nsups}{GDN superalgebras}
\newcommand{\nsupt}{GDN supertableau}
\newcommand{\nsuptx}{GDN supertableaux}
\newcommand\NB[1]{[#1]} 
\newcommand\oD{D}
\newcommand\ov[1]{\overline{#1}}
\newcommand\pdots{\mathrel{\HS{0.2}{\cdot}{\cdot}{\cdot}\HS{0.2}}}
\newcommand\pp{p}
\newcommand\qq{q}
\newcommand\quldots{ ...}
\newcommand\rr{r}
 \newcommand\rnormed[2]{[#1\wdots#2]{_{_\mathsf{R}}}}
\renewcommand\ss{s}
\renewcommand\SS{S}
\newcommand\tab{\mathsf{Tab}_{\mathsf{s}}(X)}
\newcommand\tl[1]{\widetilde{#1}}
\renewcommand\tt{t}
\newcommand\uu{u}
\newcommand\vv{v}
\newcommand\wdots{, ...\HS{0.2}, }
\newcommand\wt[1]{\mathsf{wt}#1}
\newcommand\xx{x}
\newcommand\XX{X}
\newcommand\XXps{\XX^{(\ast)}}
\newcommand\YY{Y}
\newcommand\YYplus{\YY^+}
\newcommand\yy{y}
\newcommand\zz{z}
\newcommand\ZZ{Z}
\title{On free Gelfand--Dorfman--Novikov superalgebras  and  a PBW type theorem$^{\ddag}$}
\author{Zerui Zhang$^*$}
\address{Z.Z., School of Mathematical Sciences, South China Normal University, Guangzhou 510631, P. R. China}
\email{\small 295841340@qq.com}
\author{Yuqun Chen$^{\sharp}$}
\address{Y.C., School of Mathematical Sciences, South China Normal University, Guangzhou 510631, P. R. China}
\email{yqchen@scnu.edu.cn}
\author{Leonid A. Bokut$^{\dagger}$}
\address{L.A.B., School of Mathematical Sciences, South China Normal University Guangzhou 510631, P. R. China; Sobolev Institute of mathematics, Novosibirsk, 630090, Russia; Novosibirsk State University, Novosibirsk 630090, Russia}
\email{bokut@math.nsc.ru}
\thanks{${}^{\ddag}$Supported by the NNSF of China (11571121), the NSF of Guangdong Province (2017A030313002) and the Science and Technology Program of Guangzhou (201707010137)}
\thanks{${}^*$ Supported by the Innovation Project of Graduate School of South China Normal University}
\thanks{${}^{\sharp}$ Corresponding author}
\thanks{${}^{\dagger}$ Supported by Russian Science Foundation (project 14-21-00065)}
\keywords{\nsup;   Poincar\'{e}-Birkhoff-Witt theorem;
 nilpotency.}
\subjclass{16S15, 17A30, 17A70,  17B30}
\begin{document}

\begin{abstract}
We construct linear bases of free GDN superalgebras.
 As applications,
we prove a Poincar\'{e}--Birkhoff--Witt type theorem, that is, every GDN superalgebra can be  embedded into
its universal enveloping associative differential supercommuative algebra.  An Engel theorem is  given.
\end{abstract}

\maketitle

\section{Introduction}\label{Intro}
We recall that a superalgebra over a field~$\kk$ is a vector space~$\mA$ with a direct sum decomposition
$\mA=\mA_0\oplus \mA_1$
together with a bilinear multiplication $\circ$: $\mA \times \mA\mapsto\mA$ such that~$\mA_i\circ \mA_j\subseteq \mA_{i+j}  $,
where the subscripts are elements of~$\mathbb{Z}_2$. The \emph{parity}~$\D\xx$ of every element~$\xx$ in~$\mA_0$ is~0, and the \emph{parity}~$\D\xx$ of every nonzero element~$\xx$ in~$\mA_1$ is 1.
 If a superalgebra~$\mA$ satisfies the following two identities
 $$
(x\circ(y\circ z))-((x\circ y)\circ z)
=(-1)^{|x||y|}((y\circ(x\circ z))-((y\circ x)\circ z))\ \ \ (\mbox{left supersymmetry}),
$$
and
$$
((x\circ y)\circ z)=(-1)^{|y||z|}((x\circ z)\circ y) \ \ \ (\mbox{right supercommutativity})
$$
for all elements~$\xx,\yy,\zz$ in~$\mathcal{A}_0\cup \mathcal{A}_1$,
then~$\mA$ is called a (left) \emph{Novikov superalgebra}~\cite{Xiaoping Xu super}. (There is a ``right" version of using right supersymmetry and left supercommutativity.) Moreover, if a (left) Novikov superalgebra~$\mA$ equals to its even part, \ie, $\mA=\mA_0$, then~$\mA$ is just an ordinary (left) \emph{Novikov algebra}~\cite{Gelfand,Novikov1,Novikov2}. Since Novikov algebras were invented by Balinskii and Novikov~\cite{Novikov1}, and independently by Gelfand and Dorfman~\cite{Gelfand}, we
also call Novikov algebra as Gelfand--Dorfman--Novikov algebra (GDN algebra) and call
Novikov superalgebra as Gelfand--Dorfman--Novikov superalgebra (GDN superalgebra).

 A rich structure and combinatorial theory of GDN algebras have been done up to now. Zelmanov solved Novikov's problem on classification of simple GDN algebras over an algebraically closed field: There are no such algebras besides trivial~\cite{Zelmanov}. Osborn and Zelmanov classified simple GDN algebras $A$ over an algebraically closed field of
characteristic 0 with a maximal subalgebra $H$ such that~$A /H$ has a finite dimensional irreducible $H$-submodule~\cite{Osborn 5}. Xu gave a complete classification of finite dimensional simple GDN algebras and their irreducible modules over an algebraically closed field with prime characteristic~\cite{Xiaoping Xu 2}, and he introduced some quadratic GDN superalgebras connecting with Gelfand--Dorfman ($\Omega$-bi) algebras~\cite{Xiaoping Xu conformal} (Gelfand--Dorfman ($\Omega$-bi) algebras were invented in~\cite{Gelfand}).
See also, for example, Bai and Meng~\cite{Bai 1, Bai 3}, Burde and Dekimpe~\cite{Burde 3}, Chen,  Niu and Meng~\cite{Chen ly}, Kang and Chen~\cite{super low dimension}, Zhu and Chen~\cite{A001},  Bokut,  Chen and Zhang~\cite{GDN,GDNP}.

 Dzhumadildaev and  L\"{o}fwall proved that
 the set of all the Novikov tableaux (we call them GDN tableaux because of the above reason) over a well-ordered set~$\XX$ forms a linear basis of a free GDN algebra generated by~$\XX$ by using trees and by appealing to the connection with free commutative associative differential algebra~\cite{trees}
(the idea of this connection was given by S.I. Gelfand, see~\cite{Gelfand}).
 And we wonder what would a basis of a free GDN superalgebra be like. The method of using trees developed in~\cite{trees} can not be directly applied for GDN superalgebras, but the idea of tracing a root of a tree can be modified to define the root number of a term. Moreover, the definition of GDN tableau can be easily extended to a definition of GDN supertableau, see Definition~\ref{def-sup-tab}. One of the results we prove below is as follow:
\begin{thrmA}\label{THA}
The set of all the GDN supertableaux over a well-ordered set~$\XX=\XX_0\cup \XX_1$ forms a linear basis of the
free GDN superalgebra $\fns\XX$ generated by~$\XX$, where every element of the set~$\XX_0$ is of parity~0 and every element of the set~$\XX_1$ is of parity~1.
\end{thrmA}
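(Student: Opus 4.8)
The plan is to adapt the tree-and-differential-algebra strategy of Dzhumadildaev and L\"{o}fwall~\cite{trees} to the super setting, handling signs carefully throughout. First I would recall (or introduce) the free associative differential supercommutative algebra $\fdsa$ on $\XX=\XX_0\cup\XX_1$ with its derivation $\dd$; concretely its underlying space has a basis consisting of supercommutative monomials in the "differential letters" $\dd^n(\xx)$ for $\xx\in\XX$, $n\geq 0$, where the parity of $\dd^n(\xx)$ equals $\D\xx$, and where super-sign conventions govern reordering of factors. On $\fdsa$ one defines a bilinear operation by the Gelfand-type formula $\uu\circ\vv := \uu\,\dd(\vv)$ (with no extra sign, since $\dd$ is an even operator), and a direct check shows this makes $\fdsa$ into a GDN superalgebra: left supersymmetry follows from the Leibniz rule $\dd(\uu\vv)=\dd(\uu)\vv+(-1)^{\D\uu}\uu\,\dd(\vv)$ together with supercommutativity, and right supercommutativity follows because $((\uu\circ\vv)\circ\ww)=\uu\,\dd(\vv)\,\dd(\ww)$ is manifestly super-symmetric in $\vv$ and $\ww$ up to the sign $(-1)^{\D\vv\D\ww}$. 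Let $\mW$ denote the GDN subsuperalgebra of $(\fdsa,\circ)$ generated by $\XX$.

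The next step is to establish, via the universal property, a surjective GDN-superalgebra homomorphism $\varphi\colon\fns\XX\twoheadrightarrow\mW$ sending each generator to itself, so it suffices to show that the images of the GDN supertableaux under $\varphi$ are linearly independent in $\fdsa$, \emph{and} that they span $\fns\XX$. For spanning, I would show by induction on the length of a term in $\fns\XX$ that every product of generators can be rewritten, using only left supersymmetry and right supercommutativity (with their signs), as a linear combination of GDN supertableaux — this is the super-analogue of the normal-form/rewriting argument and is where the combinatorial definition of GDN supertableau (Definition~\ref{def-sup-tab}) is used to guarantee termination and that the straightening rules do not escape the spanning set. For linear independence, the key idea, as in~\cite{trees}, is the "root number" bookkeeping: I would track which differential letter $\dd^n(\xx)$ a given supertableau is "rooted" at inside $\fdsa$, show that the leading differential monomial of $\varphi$ applied to a GDN supertableau is determined by the shape and entries of the tableau, and conclude that distinct GDN supertableaux have distinct (or at least linearly independent) images. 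The super-signs must be shown not to cause cancellations between the images of distinct basis elements; this is precisely where the parity data attached to $\XX_0$ and $\XX_1$ enters.

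The main obstacle I anticipate is twofold. First, in the super case a monomial that repeats an odd differential letter $\dd^n(\xx)$ with $\D\xx=1$ is forced to vanish by supercommutativity ($\dd^n(\xx)^2=0$), so the combinatorial model of GDN supertableaux must be set up so that exactly these "degenerate" tableaux are excluded — one must check that the definition does this correctly and that no nonzero supertableau maps to $0$. Second, the sign cancellations in the straightening process are delicate: when one applies left supersymmetry to swap the roles of two sub-terms, the sign $(-1)^{\D\xx\D\yy}$ appears, and one must verify that the induction hypothesis, stated with the correct signs, is preserved; getting these signs mutually consistent across the spanning argument and the independence argument is the technical heart of the proof. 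Everything else — the verification that $\fdsa$ is a GDN superalgebra, the universal property, and the routine length/degree inductions — I expect to go through by careful but standard computation, modelled closely on the non-super case in~\cite{trees} and on the authors' earlier work~\cite{GDN,GDNP}.
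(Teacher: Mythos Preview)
Your overall plan matches the paper's approach: build the free associative differential supercommutative algebra, equip it with the GDN operation $u\circ v=u\cdot D(v)$, and use the induced homomorphism $\varphi$ to split the problem into spanning and independence. However, you have the role of the ``root number'' inverted. In the paper the root number $\mr\mu$ is the key device for the \emph{spanning} argument (Lemma~\ref{generating set}): the straightening proceeds by a double induction, first on length and then \emph{downward} on root number, because each application of left supersymmetry produces three terms, two of which have strictly larger root number (Lemmas~\ref{interchange leaves} and~\ref{interchange two row leaves}) and are therefore handled by the inner induction. A plain induction on length alone will not terminate. For \emph{independence}, by contrast, no root-number bookkeeping is used at all; the paper defines an explicit monomial order on $\DX$ and verifies directly that distinct GDN supertableaux have distinct leading monomials under $\varphi$ (Theorem~\ref{sup-basis}), so no sign-cancellation issue between images of distinct tableaux ever arises.

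One minor correction: since $D$ is an \emph{even} derivation, the Leibniz rule is $D(uv)=D(u)\cdot v+u\cdot D(v)$ with no sign; the rule you wrote, $D(uv)=D(u)v+(-1)^{\D\uu}u\,D(v)$, is the one for an odd derivation and is inconsistent with your own parenthetical remark that $D$ is even.
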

We also prove a Poincar\'{e}--Birkhoff--Witt (PBW) type theorem for GDN superalgebras:
 \begin{thrmB}
Every GDN superalgebra can be  embedded into
its universal enveloping associative differential supercommutative algebra.
 \end{thrmB}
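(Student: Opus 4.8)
The plan is to imitate the classical Poincar\'{e}--Birkhoff--Witt argument, replacing ``universal enveloping associative algebra of a Lie algebra'' by ``universal enveloping associative differential supercommutative algebra $\kk_{\mathsf{s}}\{\mA\}$ of a GDN superalgebra $\mA$''. First I would recall (or set up) the functor $\mA\mapsto \kk_{\mathsf{s}}\{\mA\}$: given a GDN superalgebra $\mA=\mA_0\oplus\mA_1$, one forms the free associative differential supercommutative algebra on the underlying super vector space of $\mA$ and quotients by the relations $a\circ b = a\cdot d(b)$ for all $a,b\in\mA$ (with $d$ the differential and $\cdot$ the supercommutative product), so that $\mA$ maps to $\kk_{\mathsf{s}}\{\mA\}$ as a GDN superalgebra via $a\mapsto a$, the GDN operation being recovered as $x\circ y = x\cdot d(y)$. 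The content of Theorem B is that this canonical map is injective.

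The key step is to reduce the statement to the free case via Theorem A, together with the Gr\"{o}bner--Shirshov machinery that underlies it. Concretely: pick a well-ordered basis of $\mA$ respecting the $\mZ_2$-grading and present $\mA$ as $\mathsf{GDN}_{\mathsf{s}}(\XX\mid\SS)$ for a suitable set of defining relations $\SS$ that forms a \gsb\ (one may always complete to such a basis, or argue with the leading terms directly); then $\Irr{\SS}$, a subset of the GDN supertableaux, is a linear basis of $\mA$ by the Composition--Diamond lemma in this setting. On the enveloping side, the same relations $\SS$ (rewritten through $x\circ y = x\cdot d(y)$) together with the differential-supercommutative rewriting rules present $\kk_{\mathsf{s}}\{\mA\}$; the task is to show that the irreducible differential monomials spanning $\kk_{\mathsf{s}}\{\mA\}$, when restricted to those lying in the image of the GDN supertableaux, are linearly independent and are exactly indexed by $\Irr{\SS}$. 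Thus the composite $\Irr{\SS}\hookrightarrow \fns\XX \to \kk_{\mathsf{s}}\{\mA\}$ sends a basis of $\mA$ to a linearly independent set, which gives injectivity.

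In practice it is cleanest to first prove Theorem B for the \emph{free} GDN superalgebra $\fns\XX$: one shows directly that $\fns\XX$ embeds into the free associative differential supercommutative algebra $\kk_{\mathsf{s}}\{\XX\}$, the GDN supertableaux over $\XX$ going to a linearly independent family of differential monomials there (this is essentially the super-analogue of the Gelfand/Dzhumadildaev--L\"{o}fwall realization and should fall out of the combinatorics developed to prove Theorem A, since the bijection between GDN supertableaux and their images is what makes the ``root number'' bookkeeping work). Then for general $\mA$, write $\mA=\fns\XX/\Id{\SS}$ and check that the ideal generated by $\SS$ in $\fns\XX$ is exactly the preimage of the corresponding differential ideal in $\kk_{\mathsf{s}}\{\XX\}$; passing to quotients yields the embedding $\mA\hookrightarrow \kk_{\mathsf{s}}\{\XX\}/(\text{diff. ideal}) = \kk_{\mathsf{s}}\{\mA\}$.

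The main obstacle I expect is the sign bookkeeping and the verification that $\SS$ (or its image) really is a \gsb\ simultaneously on both sides --- i.e.\ that the compositions (ambiguities) of the GDN superalgebra relations with each other and with the supercommutativity/Leibniz relations all resolve. This is where the parity factors $(-1)^{|x||y|}$ in left supersymmetry and right supercommutativity interact with the Koszul signs coming from the differential supercommutative product, and getting these to match on the nose is the delicate part; everything else is a routine transcription of the classical PBW-via-Gr\"{o}bner--Shirshov argument.
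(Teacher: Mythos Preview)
Your overall architecture is right and matches the paper: first embed the free GDN superalgebra $\fns\XX$ into the free associative differential supercommutative algebra $\fdsa$ (this is indeed exactly what the proof of Theorem~A produces), then for a general $\mA=\fns\XX/\Id\SS$ show that the GDN-ideal $\Id\SS$ is the preimage of the differential ideal generated by the image of $\SS$, and conclude by the isomorphism theorems.

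Where you diverge from the paper is in the tool you propose for the ideal-correspondence step. You reach for Gr\"{o}bner--Shirshov bases---completing $\SS$ to a GSB and arguing it remains one on the differential side---and you correctly anticipate that the sign/composition bookkeeping would be the painful part. The paper avoids all of this. The key observation you are missing is a \emph{weight grading}: assign to a monomial $\oD^{i_1}(a_1)\cdots \oD^{i_n}(a_n)$ the weight $(\sum_j i_j)-n+1$. Then the image of $\fns\XX$ in $\fdsa$ is precisely the weight-$0$ subspace $\kk\DDX$, and the differential ideal $\IdD{\varphi(\SS)}$ is spanned by elements $u\cdot \oD^t(\varphi(s))$. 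Restricting to weight $0$ and showing by two short inductions that every such weight-$0$ element already lies in the GDN ideal $\IDD{\varphi(\SS)}$ yields $\IDD{\varphi(\SS)}=\IdD{\varphi(\SS)}\cap\kk\DDX$ for \emph{arbitrary} homogeneous $\SS$---no completion, no compositions, no compatibility of monomial orders. The embedding then falls out of one line of isomorphism-theorem algebra.

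So your plan is not wrong, but it takes a considerably harder road: the GSB route would require setting up compatible orderings on both categories and resolving ambiguities that involve the Leibniz rule and supercommutativity simultaneously, none of which the paper needs. The weight-grading shortcut is the idea worth internalizing here; it is what makes the passage from the free case to the general case essentially free.
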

As a corollary, we show that every GDN superalgebra generated by a finite set of elements of parity~1 is nilpotent. Several results concerning the nilpotency of certain GDN algebras have been found up to now.
Zelmanov  proved that, if~$\mA$ is a left-nilpotent finite dimensional (right) GDN
algebra  over a field of characteristic zero, then~$\mA^{2}$ is nilpotent~\cite{Zelmanov}.
Filippov proved that a right-nil algebra of bounded index over a field of characteristic
zero is right nilpotent provided that it is right symmetric  and is nilpotent provided that it is a right GDN algebra~\cite{Filippov 2}.
  Dzhumadildaev and  Tulenbaev  proved that if a
(right) GDN algebra~$\mA$   over a field of characteristic~$\pp$ is left-nil of bounded index~$\nn$ and~$\pp=0$ or~$\pp>\nn$, then~$\mA^2$ is nilpotent~\cite{engel novikov}. Again, to some extent, this kind of result can be extended to the case of GDN superalgebras, and we prove the following Engel theorem:
\begin{thrmC}\label{theoremc}
 Let~$\mA=\mA_0\oplus \mA_1$ be a $($left$)$ GDN superalgebra over a field of characteristic~0 generated by~$\XX=\XX_0\cup \XX_1$,
 where every element of the set~$\XX_0$ is of parity~0 and every element of the set~$\XX_1$ is of parity~1. If for some integer~$\nn>0$, the even part~$\mA_0$ is right-nil of bounded index~$\nn$ and $\XX_1$ is a finite set, then~$\mA^2$ is nilpotent.
\end{thrmC}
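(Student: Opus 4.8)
The plan is to derive Theorem~C by combining the PBW embedding of Theorem~B with the Engel theorem for ordinary GDN algebras of Dzhumadildaev and Tulenbaev~\cite{engel novikov}; the genuinely new input will be the finiteness of~$\XX_1$ together with the supercommutativity of the universal enveloping algebra. Throughout, for an ideal~$I$ of a (non-unital) algebra write~$I^{[\kk]}$ for the linear span of all~$\kk$-fold products of elements of~$I$, formed with arbitrary bracketings, so that ``$I$ nilpotent'' means~$I^{[\kk]}=0$ for some~$\kk$.

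By Theorem~B we may view~$\mA$ as a GDN subalgebra of its universal enveloping associative differential supercommutative algebra~$(U,\dd)$, in which the product is recovered as~$\aa\circ \bb=\aa\cdot \dd(\bb)$. As~$\mA$ is generated by~$\XX=\XX_0\cup \XX_1$, the algebra~$U$ is generated as a differential supercommutative algebra by~$\XX$; in particular its odd part is the span of the~$\dd^{\ii}\yy$ with~$\yy\in \XX_1$, $\ii\geq 0$, and since the base field has characteristic~$0$ each of these elements squares to~$0$ in~$U$. Specialising the defining superidentities to parity-$0$ elements shows that~$\mA_0$ is an ordinary (left) GDN algebra; by hypothesis it is right-nil of bounded index~$\nn$, so~\cite{engel novikov} (in characteristic~$0$, after passing to the opposite algebra to match the left/right conventions) gives that~$\mA_0^{2}$ is nilpotent. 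Writing~$\mB$ for the ideal of~$\mA$ generated by~$\XX_1$, every~$\circ$-monomial in the generators containing an odd number of factors from~$\XX_1$ lies in~$\mB$, hence~$\mA_1\subseteq \mB$, the quotient~$\mA/\mB$ equals its own even part and is right-nil of bounded index~$\nn$, and so~$(\mA/\mB)^{2}$ is nilpotent; in particular~$(\mA^{2})^{[\ss]}\subseteq \mB$ for some~$\ss\geq 1$.

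The core of the proof is to promote this to the nilpotency of~$\mA^{2}$, by carrying the counting argument of~\cite{engel novikov} into the super setting. It is enough to treat~$\mA=\fns\XX/\mathcal{I}$, the relatively free GDN superalgebra modulo the ideal~$\mathcal{I}$ defined by the identity ``the even part is right-nil of bounded index~$\nn$''; since over a field of characteristic~$0$ this identity is equivalent to its multilinearisation, $\mathcal{I}$ is homogeneous with respect to the content in the elements of~$\XX_1$, so that~$\mA$ is graded by~$\mathbb{Z}_{\geq 0}^{\,\XX_1}$. One then proceeds by induction on~$\mm=|\XX_1|$, the base case~$\mm=0$ being exactly the Dzhumadildaev--Tulenbaev theorem. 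For the inductive step fix~$\yy\in \XX_1$ and let~$J$ be the ideal of~$\mA$ generated by~$\yy$; then~$\mA/J$ satisfies the hypotheses with~$\XX_1$ replaced by~$\XX_1\setminus\{\yy\}$, so by induction~$(\mA^{2})^{[\ss']}\subseteq J$ for some~$\ss'$, and the task reduces to bounding, inside~$U$, the length of a nonzero~$\circ$-product all of whose factors come from~$(\mA^{2})^{[\ss']}\subseteq J$. Because each~$\dd^{\ii}\yy$ squares to~$0$, such a product must distribute its occurrences of~$\yy$ over pairwise distinct~$\dd$-depths; the right-nil identity on the even part, made explicit through the relations of~$U$, then prevents the even blocks separating consecutive occurrences of~$\yy$ from growing without bound, exactly as in the proof of the corollary to Theorem~B. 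A pigeonhole over the finitely many odd generators, together with the degree estimate furnished by~\cite{engel novikov}, produces an integer~$N$ with~$(\mA^{2})^{[N]}=0$, completing the induction and hence the proof.

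The step I expect to be hardest is the last one. The GDN product is neither commutative nor associative, so one cannot simply collect the even and the odd factors of a long~$\circ$-product; the bookkeeping that at the same time controls the~$\dd$-depths of the~$\mm$ odd generators and invokes the nilpotency of~$\mA_0^{2}$ has to be set up carefully, and also uniformly in the number of even generators so that the resulting bound~$N$ depends only on~$\nn$ and~$\mm$. The cleanest route seems to be to prove the length bound first in the content-graded relatively free superalgebra~$\fns\XX/\mathcal{I}$ and only then transport it to the given~$\mA$.
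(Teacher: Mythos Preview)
Your route is genuinely different from the paper's: the paper makes \emph{no} use of the PBW embedding in proving Theorem~C. It works entirely inside~$\mA$, first reducing nilpotency of~$\mA^{2}$ to right nilpotency of~$\mA$ via the elementary inclusion~$(\mA^{2})^{n}\subseteq \mA^{n+1}\LSUB$, and then showing~$\mA^{q}\LSUB=0$ for the explicit~$q=3\max(\sharp\XX_1,n{+}1)+1$.

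Your argument has a real gap at precisely the place you flag. The sentence ``such a product must distribute its occurrences of~$\yy$ over pairwise distinct~$\oD$-depths; the right-nil identity on the even part \ldots\ then prevents the even blocks \ldots\ from growing without bound'' is the entire problem, not a step. In~$U$ there are infinitely many depths~$\oD^{\ii}\yy$ available, so the square-zero property of odd elements gives nothing by itself: a product living in~$J^{[k]}$ can perfectly well involve~$\yy\cdot\oD\yy\cdot\oD^{2}\yy\cdots\oD^{k-1}\yy$. The right-nil identity~$\xx^{n}\LSUB=0$ for~$\xx\in\mA_0$ translates in~$U$ only to relations of the shape~$\xx\cdot(\oD\xx)^{n-1}=0$ and their multilinearisations; none of these bounds how large an exponent~$\ii$ may appear on an odd letter~$\oD^{\ii}\yy$ inside a long~$\circ$-product. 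So the pigeonhole over~$\XX_1$ never fires, and the induction on~$|\XX_1|$ stalls at the very step it was supposed to resolve.

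The paper's substitute for this missing mechanism is a purely internal splitting lemma: for every~$q\geq 1$,
\[
[\mA_0,\underbrace{\mA_1,\dots,\mA_1}_{q}]\LSUB
\ \subseteq\
\sum_{\substack{2t+m+p=q\\ p\in\{1,2\}}}
[\mA_0,\underbrace{\mA_0,\dots,\mA_0}_{t},
\underbrace{\kk\XX_1,\dots,\kk\XX_1}_{m},
\underbrace{\mA_1,\dots,\mA_1}_{p}]\LSUB.
\]
The crucial feature is that the~$\kk\XX_1$ entries are \emph{bare odd generators sitting in right positions of a left-normed word}. Right supercommutativity lets one permute all right entries up to sign, so as soon as~$m>\sharp\XX_1$ two of them coincide and the word vanishes in characteristic~$\neq 2$; no depth bound is needed at all. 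In the complementary case~$m\leq\sharp\XX_1$, the constraint~$2t+m+p=q$ forces enough~$\mA_0$ entries on the left that~$(\mA_0)^{n+1}\LSUB=0$ (the even Engel theorem of~\cite{engel novikov}) already kills the word. If you want to rescue the PBW route, you would need to manufacture an analogue of this splitting inside~$U$, and the embedding by itself does not supply it.
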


The paper is organized as follows.
In section~\ref{sec-gdnsupertableaux}, we construct a linear generating set~$\tab$ for a free GDN superalgebra generated by a well-ordered set~$\XX$ over a field of characteristic~$\neq2$. (For  the  case of characteristic~$2$, a GDN superalgebra is the same as a GDN algebra, so a linear basis is already known~\cite{trees}). In section~\ref{basis-and-pbw-sec}, we show the linear independence of~$\tab$, and we also prove a PBW type theorem for~\nsups. In section~\ref{nil-sec}, we prove Theorem~C, an Engel type theorem for GDN superalgebras.
\section{A linear generating set of~$\fns\XX$} \label{sec-gdnsupertableaux}
Our aim in this section is to construct a specific linear generating set of the free GDN superalgebra~$\fns\XX$ generated by a well-ordered set~$\XX$. (We shall show that the set we constructed is indeed linearly independent in the next section.) The idea of our construction is reminiscent of what was done for GDN algebras in~\cite{trees}. However, the original method of using trees is not extended directly. So we develop a new notion of root number of a term. In the whole paper, we assume that~$\XX=\XX_0\cup\XX_1$ is a fixed well-ordered set, where every element of the set~$\XX_0$ is of parity~0 and every element of the set~$\XX_1$ is of parity~1.
We also assume that the characteristic~$\chart(\kk)$ of the field~$\kk$ is not 2.

\subsection{The root number of a term}
In this subsection, we first recall the definition of terms. Then we define the root map from the set of all terms over a set~$\XX$ to the set of nonnegative integers. Finally, we develop several handy properties of the root map. They will be useful in the sequel when we try to develop a method of writing an arbitrary term into a linear combination of some specified terms (hereafter called GDN supertableaux).

We recall that \emph{terms} over~$\XX$ are defined by the following induction:

\ITEM1 Every element~$\aa$ of~$\XX$ is a term over~$\XX$;

\ITEM2 If~$\mu$ and~$\nu$ are terms over~$\XX$, then~$(\mu \circ \nu)$ is a term over~$\XX$. Denote by~$\XXps$ the set of all terms over~$\XX$.

For every term~$\mu$ in~$\XXps$, the \emph{length}~$\ell(\mu)$ of~$\mu$ is defined to be~1 if~$\mu$ lies
in~$\XX$, and~$\ell(\mu)$ is defined to be~$\ell(\mu_1)+\ell(\mu_2)$ if~$\mu=(\mu_1\circ\mu_2)$ for some terms~$\mu_1$ and~$\mu_2$ in~$\XXps$.
Similar to the definition of length, for every term~$\mu$ in~$\XX^{(\ast)}$, the parity~$\D\mu$ of~$\mu$ satisfies the following claims: \ITEM1 $\D\mu=0$ if~$\mu$ lies in~$\XX_0$, and $\D\mu=1$ if~$\mu$ lies in~$\XX_1$. \ITEM2 $\D\mu=\D{\mu_1}+\D{\mu_2}$ modulo~2 if~$\mu=(\mu_1\circ \mu_2)$.

\begin{defi}
 We define a \emph{root map}~$\mathsf{\rr}$ from the set~$\XXps$ to the set~$\mathbb{\ZZ}_{\geq 0}$ of nonnegative integers defined inductively as follows:

 \ITEM1  $\mr\aa=0$ for every element~$\aa$ in $\XX$;

\ITEM2  $\mr{(\mu \circ \nu )}=\mr{\mu}+1$ if~$\nu$ lies in~$\XX$, and
        $\mr{ (\mu \circ \nu )}=\mr\mu+\mr\nu$ if~$\nu$ does not lie in~$\XX$.
\end{defi}
For every term~$\mu$ in~$\XXps$, we call~$\mr\mu$ the \emph{root number} of~$\mu$ to indicate that our idea is based on~\cite{trees}, in which the authors appealed to the tool of trees (and roots of trees).
For all terms~$\mu_1, \quldots , \mu_\nn $ in~$\XXps$, to make the notations shorter, define
\begin{align*}
\lnormed{\mu_1}{\mu_\nn}&=((\quldots ((\mu_1\circ \mu_2) \circ\mu_3) \circ\pdots)\circ \mu_\nn) \ \ \mbox{(left-normed bracketing),}&\\
\rnormed{\mu_1}{\mu_\nn}&= (\mu_1\circ(\pdots\circ (\mu_{\nn-2} \circ(\mu_{\nn-1} \circ\mu_{\nn}))\quldots )) \ \ \mbox{(right-normed bracketing).}&
\end{align*}
Moreover, if~$\mu_1\wdots\mu_n$ are elements of~$\XX$, then we call~$\rnormed{\mu_1}{\mu_\nn}$ a~\emph{simple term} over~$\XX$ of \emph{length}~$\nn$.

Below we offer an instance of counting the root number of a term in~$\XXps$.
\begin{exam}\label{example n-1}
 For every positive integer~$\nn$, for all elements~$\aa_\ii$ $(1\leq \ii\leq \nn)$ in~$\XX$,  we have

 \ITEM1  $\mr{\rnormed{\aa_1}{\aa_\nn}}=1$;

 \ITEM2  $\mr{\lnormed{\aa_1}{\aa_\nn}}=\nn-1$.
\end{exam}
In general, the root number of a term~$\mu$ is not uniquely decided by the length~$\ell(\mu)$. The following lemma shows that the root number~$\mr\mu$ is bounded above by~${\ell(\mu)-1}$.

\begin{lemm}{\label{prop n-1}}
For every term~$\mu$ in~$\XXps$, we have~$\mr\mu\leq \ell(\mu)-1$, with equality only if~$\mu=\lnormed{\aa_1}{\aa_{\ell(\mu)}}$ for some elements~$\aa_1\wdots\aa_{\ell(\mu)}$ in~$\XX$.
\end{lemm}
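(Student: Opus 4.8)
The plan is to prove both assertions simultaneously by induction on the length $\ell(\mu)$. The base case $\ell(\mu)=1$ is immediate: then $\mu\in\XX$, so $\mr\mu=0=\ell(\mu)-1$, and the equality clause holds because a single generator is, by convention, the left-normed product $\lnormed{\aa_1}{\aa_1}$. For the inductive step one writes $\mu=(\mu_1\circ\mu_2)$ with $\ell(\mu)=\ell(\mu_1)+\ell(\mu_2)$, and splits according to the two branches of the defining recursion of $\mathsf{r}$, namely whether or not $\mu_2$ lies in $\XX$.

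In the first branch, $\mu_2\in\XX$, so $\ell(\mu_2)=1$, $\ell(\mu_1)=\ell(\mu)-1$, and $\mr\mu=\mr{\mu_1}+1$. Applying the induction hypothesis to $\mu_1$ gives $\mr{\mu_1}\leq\ell(\mu_1)-1=\ell(\mu)-2$, hence $\mr\mu\leq\ell(\mu)-1$. If equality holds then $\mr{\mu_1}=\ell(\mu_1)-1$, so by induction $\mu_1=\lnormed{\aa_1}{\aa_{\ell(\mu_1)}}$ with all $\aa_i\in\XX$; appending the generator $\mu_2$ then exhibits $\mu=\lnormed{\aa_1}{\aa_{\ell(\mu)}}$, as required.

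In the second branch, $\mu_2\notin\XX$, so $\ell(\mu_2)\geq 2$ and $\mr\mu=\mr{\mu_1}+\mr{\mu_2}$. The induction hypothesis yields $\mr{\mu_1}\leq\ell(\mu_1)-1$ and $\mr{\mu_2}\leq\ell(\mu_2)-1$, so $\mr\mu\leq\ell(\mu_1)+\ell(\mu_2)-2=\ell(\mu)-2<\ell(\mu)-1$; thus the bound is strict here and the equality clause is automatically vacuous. Combining the two branches closes the induction.

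I do not expect a genuine obstacle: this is a routine structural induction. The only step that deserves a little care is the equality analysis — one must notice that whenever the right factor $\mu_2$ is not a generator it already consumes $\ell(\mu_2)\geq 2$ units of length while contributing at most $\ell(\mu_2)-1$ to the root number, which is exactly what forbids equality outside the left-normed shape, and one should check that the iterated unfolding of the left factor $\mu_1$ in the first branch terminates in an honest left-normed word over $\XX$ (including the degenerate one-letter case).
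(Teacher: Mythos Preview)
Your proof is correct and follows essentially the same route as the paper's: induction on $\ell(\mu)$, splitting the inductive step according to whether $\mu_2\in\XX$ (equivalently $\ell(\mu_2)=1$) or not, with the equality case reducing to the induction hypothesis on $\mu_1$. The only cosmetic difference is the order in which you treat the two branches and the extra commentary you supply on why equality fails when $\mu_2\notin\XX$.
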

\begin{proof}
Use induction on~$\ell(\mu)$. For~$\ell(\mu)=1$, we have~$\mr\mu=0=\ell(\mu)-1$. For~$\ell(\mu)>1$, we have~$\mu=(\mu_1\circ\mu_2)$ for some terms~$\mu_1$ and~$\mu_2$ in~$\XXps$. If~$\ell(\mu_2)>1$, then by induction hypothesis, we have~$$\mr\mu=\mr{\mu_1}+\mr{\mu_2}\leq \ell(\mu_1)-1+\ell(\mu_2)-1<\ell(\mu)-1.$$
On the other hand, if~$\ell(\mu_2)=1$, then by induction hypothesis, we have~$$\mr\mu=\mr{\mu_1}+1\leq \ell(\mu_1)-1+1=\ell(\mu_1)=\ell(\mu)-1,$$
with the equality only if~$\mr{\mu_1}=\ell(\mu_1)-1$. The claim follows by induction hypothesis.
\end{proof}
The following lemma offers a formula for counting the root number of a term.
\begin{lemm}\label{root-fml}
  For every term~$\mu=(\mu_1,\mu_2)$ in~$\XXps$, we
   have~$$\mr\mu=\mr{\mu_1}+\max(1,\mr{\mu_2})\geq 1,$$ with~$\mr\mu=1$ only if~$\mu=\rnormed{\aa_1}{\aa_{\ell(\mu)}}$ for some elements~$\aa_1\wdots\aa_{\ell(\mu)}$ in~$\XX$.
\end{lemm}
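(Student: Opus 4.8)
The plan is to prove both assertions simultaneously by induction on the length~$\ell(\mu)$, the only ingredients being the two clauses in the definition of the root map. Write~$\mu=(\mu_1\circ\mu_2)$, so that~$\ell(\mu_1),\ell(\mu_2)<\ell(\mu)$ and the induction hypothesis is available for~$\mu_1$, for~$\mu_2$, and for every shorter term.

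First I would establish the formula~$\mr\mu=\mr{\mu_1}+\max(1,\mr{\mu_2})$ together with the bound~$\mr\mu\geq1$. If~$\mu_2$ lies in~$\XX$, then~$\mr{\mu_2}=0$, so~$\max(1,\mr{\mu_2})=1$, and by definition~$\mr\mu=\mr{\mu_1}+1=\mr{\mu_1}+\max(1,\mr{\mu_2})\geq1$. If~$\mu_2$ does not lie in~$\XX$, then~$\ell(\mu_2)>1$, so~$\mu_2=(\nu_1\circ\nu_2)$ for some terms~$\nu_1,\nu_2$ in~$\XXps$, and the induction hypothesis applied to~$\mu_2$ gives~$\mr{\mu_2}=\mr{\nu_1}+\max(1,\mr{\nu_2})\geq1$; hence~$\max(1,\mr{\mu_2})=\mr{\mu_2}$, and by definition~$\mr\mu=\mr{\mu_1}+\mr{\mu_2}=\mr{\mu_1}+\max(1,\mr{\mu_2})\geq1$. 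A byproduct of this, which I would keep in hand (either extracted here or proved first as a one-line lemma), is the auxiliary fact that~$\mr\nu\geq1$ for every~$\nu\in\XXps\setminus\XX$; equivalently,~$\mr\nu=0$ forces~$\nu\in\XX$.

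For the equality case, suppose~$\mr\mu=1$. Since~$\mr{\mu_1}\geq0$ and~$\max(1,\mr{\mu_2})\geq1$, the identity just proved forces~$\mr{\mu_1}=0$ and~$\max(1,\mr{\mu_2})=1$, that is~$\mr{\mu_2}\leq1$. By the auxiliary fact,~$\mr{\mu_1}=0$ gives~$\mu_1\in\XX$, say~$\mu_1=\aa_1$. If~$\mu_2\in\XX$, then~$\ell(\mu)=2$ and~$\mu=(\aa_1\circ\aa_2)=\rnormed{\aa_1}{\aa_{\ell(\mu)}}$ with~$\aa_2:=\mu_2$. If~$\mu_2\notin\XX$, then the auxiliary fact gives~$\mr{\mu_2}\geq1$, hence~$\mr{\mu_2}=1$; since~$\ell(\mu_2)=\ell(\mu)-1$, the ``only if'' clause of the induction hypothesis applied to~$\mu_2$ yields~$\mu_2=\rnormed{\aa_2}{\aa_{\ell(\mu)}}$ for some~$\aa_2\wdots\aa_{\ell(\mu)}$ in~$\XX$, and therefore~$\mu=(\aa_1\circ\rnormed{\aa_2}{\aa_{\ell(\mu)}})=\rnormed{\aa_1}{\aa_{\ell(\mu)}}$, as desired.

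The whole argument is a short induction bookkeeping the two clauses defining the root map. The one point that is not completely automatic is the lower bound~$\mr\mu\geq1$: it is invisible from the defining clauses taken in isolation and genuinely relies on the inductive fact that non-variable terms have positive root number, so the clean way to organize the proof is to carry that fact along inside the same induction. Apart from this minor sequencing issue I foresee no obstacle.
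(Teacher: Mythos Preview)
Your proof is correct and follows essentially the same inductive argument on~$\ell(\mu)$ as the paper: split on whether~$\mu_2\in\XX$, use the induction hypothesis on~$\mu_2$ to get~$\mr{\mu_2}\geq1$ otherwise, and then handle the equality case by forcing~$\mu_1\in\XX$ and applying the ``only if'' clause inductively to~$\mu_2$. You spell out the equality case more explicitly than the paper (which just writes ``the induction hypothesis forces~$\ell(\mu_1)=1$; the claim follows''), but the substance is identical.
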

\begin{proof}
 Use induction on~$\ell(\mu)$. For~$\ell(\mu)=2$, the terms~$\mu_1$ and~$\mu_2$ lie in~$\XX$, so the claim follows. For~$\ell(\mu)>2$,  if~$\mu_2$ lies in~$\XX$, then~$\max(1,\mr{\mu_2})=1$; if~$\ell(\mu_2)\geq 2$, then by induction hypothesis, we have~$\mr{\mu_2}\geq 1$ and so~$\max(1,\mr{\mu_2})=\mr{\mu_2}$.
 Therefore, we obtain~$$\mr\mu=\mr{\mu_1}+\max(1,\mr{\mu_2})\geq 1.$$ If the equality~$\mr\mu=1$ holds, then the induction hypothesis forces ~$\ell(\mu_1)=1$.  The claim follows.
\end{proof}

The following lemma shows that the root map is compatible with the right supercommutativity, and to some extent, the root map is also compatible with the product~$\circ$.
\begin{lemm}\label{root-pro}
For all terms~$\mu_1,\mu_2$ and~$\mu_3$ in~$\XXps$,
we have

 \ITEM1 $\mr{[\mu_1,\mu_2,\mu_3]_{_\mathsf{L}}}
 =\mr{[\mu_1,\mu_3,\mu_2]_{_\mathsf{L}}}$;

 \ITEM2 If~$\mr{\mu_1}>\mr{\mu_2}$, then $\mr{(\mu_1\circ\mu_3)}>\mr{(\mu_2\circ\mu_3)}$;

 \ITEM3 If~$\mr{\mu_1}>\mr{\mu_2}$ and~$\mr{\mu_1}>1$, then~$\mr{(\mu_3\circ\mu_1)}>\mr{(\mu_3\circ\mu_2)}$;

 \ITEM4 If~$\mr{\mu_1}>\mr{\mu_2}$ and~$\mr{\mu_1}=1$, then~$\mr{(\mu_3\circ\mu_1)}=\mr{(\mu_3\circ\mu_2)}$.
\end{lemm}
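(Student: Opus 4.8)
The plan is to verify all four items by induction on the length of the terms involved, using the two formulas already established for the root number: the defining recursion (item (ii) of the definition) and especially the compact formula $\mr{(\mu_1\circ\mu_2)}=\mr{\mu_1}+\max(1,\mr{\mu_2})$ from Lemma~\ref{root-fml}. That formula is the workhorse here because it treats the two cases ($\mu_2\in\XX$ versus $\ell(\mu_2)\geq2$) uniformly, so most of the bookkeeping disappears.

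For item~(i), unwind both sides with the defining recursion. Writing $\nu=(\mu_1\circ\mu_2)$ and computing $\mr{(\nu\circ\mu_3)}$, there are two subcases according to whether $\mu_3\in\XX$. If $\mu_3\notin\XX$, then $\mr{(\nu\circ\mu_3)}=\mr{\nu}+\mr{\mu_3}=\mr{\mu_1}+\max(1,\mr{\mu_2})+\mr{\mu_3}$, and $\max(1,\mr{\mu_3})=\mr{\mu_3}$ by Lemma~\ref{root-fml}, so this equals $\mr{\mu_1}+\max(1,\mr{\mu_2})+\max(1,\mr{\mu_3})$, which is symmetric in $\mu_2$ and $\mu_3$. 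If $\mu_3\in\XX$, then $\mr{(\nu\circ\mu_3)}=\mr{\nu}+1=\mr{\mu_1}+\max(1,\mr{\mu_2})+1$; one must check that the other bracketing $[\mu_1,\mu_3,\mu_2]_{_\mathsf{L}}=((\mu_1\circ\mu_3)\circ\mu_2)$ gives the same value. Here $\mr{(\mu_1\circ\mu_3)}=\mr{\mu_1}+1$ since $\mu_3\in\XX$, and then $\mr{((\mu_1\circ\mu_3)\circ\mu_2)}=\mr{\mu_1}+1+\max(1,\mr{\mu_2})$ by Lemma~\ref{root-fml} again. So both sides agree; no induction is actually needed for~(i), just the two formulas.

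For items~(ii), (iii), (iv), apply Lemma~\ref{root-fml} directly. Item~(ii): $\mr{(\mu_i\circ\mu_3)}=\mr{\mu_i}+\max(1,\mr{\mu_3})$ for $i=1,2$, and since the second summand is common, $\mr{\mu_1}>\mr{\mu_2}$ gives the strict inequality at once. Item~(iii): $\mr{(\mu_3\circ\mu_i)}=\mr{\mu_3}+\max(1,\mr{\mu_i})$; the hypothesis $\mr{\mu_1}>1$ forces $\max(1,\mr{\mu_1})=\mr{\mu_1}>\mr{\mu_2}\geq\max(1,\mr{\mu_2})$ — wait, one needs $\max(1,\mr{\mu_1})>\max(1,\mr{\mu_2})$, which holds because $\max(1,\cdot)$ is weakly monotone and becomes strict once the larger argument exceeds $1$; more precisely $\max(1,\mr{\mu_1})=\mr{\mu_1}$ and $\max(1,\mr{\mu_2})\leq\max(1,\mr{\mu_1}-1)=\mr{\mu_1}-1<\mr{\mu_1}$ when $\mr{\mu_1}\geq 2$. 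Item~(iv): if $\mr{\mu_1}=1$ and $\mr{\mu_1}>\mr{\mu_2}$, then $\mr{\mu_2}=0$, so $\mu_2\in\XX$ and $\max(1,\mr{\mu_2})=1=\max(1,\mr{\mu_1})$, hence $\mr{(\mu_3\circ\mu_1)}=\mr{\mu_3}+1=\mr{(\mu_3\circ\mu_2)}$.

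I do not expect a genuine obstacle here: once Lemma~\ref{root-fml} is in hand the statement is essentially a sequence of elementary observations about the function $\max(1,-)$ on the nonnegative integers, and the only care needed is to handle the two alternatives ($\mu_3\in\XX$ or not) in item~(i) and to note in item~(iv) that $\mr{\mu}=0$ forces $\mu\in\XX$ (immediate from Lemma~\ref{root-fml}, which gives $\mr{\mu}\geq1$ for every non-generator). The mildly delicate point is the strictness bookkeeping in~(iii), i.e.\ making explicit that $\max(1,-)$ is strictly increasing on $\{1,2,3,\dots\}$ so that $\mr{\mu_1}>\mr{\mu_2}$ with $\mr{\mu_1}\geq2$ transfers to $\max(1,\mr{\mu_1})>\max(1,\mr{\mu_2})$.
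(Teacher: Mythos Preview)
Your proposal is correct and follows exactly the paper's approach: the paper's proof consists of the single sentence ``The lemma follows immediately from Lemma~\ref{root-fml},'' and you have simply spelled out the routine verifications that this entails. Your case split in item~(i) is in fact unnecessary, since Lemma~\ref{root-fml} already gives $\mr{((\mu_1\circ\mu_2)\circ\mu_3)}=\mr{\mu_1}+\max(1,\mr{\mu_2})+\max(1,\mr{\mu_3})$ uniformly, which is visibly symmetric in $\mu_2$ and $\mu_3$.
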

\begin{proof} The lemma follows immediately from Lemma~\ref{root-fml}.
\end{proof}

\subsection{GDN supertableaux}

Now we are ready to define the notion of a GDN supertableau, which is directly reminiscent of the notation of a GDN tableau. Our aim in this subsection is to show that, if~$\chart(\kk)\neq 2$, then the set of all GDN supertableaux over~$\XX$ forms a linear generating set of the free GDN superalgebra~$\fns\XX$.
 \begin{defi}\label{def-sup-tab} We call a term~$\mu$  a \textit{Gelfand--Dorfman--Novikov supertableau} (GDN supertableau) over~a well-ordered set~$\XX=\XX_0\cup\XX_1$ if, for some letter~$\aa$ in~$\XX$, for some nonnegative integer~$\nn$, and for some simple terms~$\mu_{i}=\rnormed{\aa_{\ii,\rr_\ii}}{\aa_{\ii,1}}$ ($1\leq \ii\leq \nn$) over~$\XX$ of length~$\rr_i\geq 1$, we have
\begin{eqnarray}\label{form-sup-tab}
\mu=\lnormed{\aa,\mu_1}{\mu_{n}}
\end{eqnarray}
   such that
the following conditions hold:

\ITEM1 The integers~$\rr_1\wdots\rr_n$ satisfy that~$r_1\geq  \cdots \geq r_n$;

\ITEM2  If $r_i=r_{i+1}$, then $ a_{i,1}\geq a_{i+1,1}$ holds;

\ITEM3  The inequality~$a\geq a_{1,r_{1}}\geq \cdots \geq a_{1,2}\geq a_{2,r_{2}}\geq \cdots \geq a_{2,2}\geq \cdots \geq a_{n,r_{n}}\geq \cdots \geq a_{n,2}$ holds;

\ITEM4   If for some integers~$\ii,\tt,\jj$ and~$\ll$ satisfying~$\ii\leq n$, $\tt\leq n$, $2\leq \jj\leq \rr_j$ and~$2\leq\ll\leq \rr_t$, the letters~$a_{i,  j}$ and~$a_{t,  l}$ lie in~$\XX_1$, then the inequality~$a_{\ii,\jj}\neq a_{t,  l}$ holds;

\ITEM5   If for some integer~$\ii\leq \nn-1$, the elements~$\aa_{\ii,1}$ and~$\aa_{\ii+1,1}$ lie in~$\XX_1$, and~$\rr_{\ii}=\rr_{\ii+1}$, then the inequality~$\aa_{\ii,1}\neq\aa_{\ii+1,1}$ holds.
\end{defi}

Every term of the form~\eqref{form-sup-tab} satisfying Points~\ITEM1-\ITEM3 is called a \emph{Novikov tableau}~\cite{trees} over~$\XX$, and we call it a GDN tableau because of the reason explained in the introduction. Denote by~$\tab$ the set of all the GDN supertableaux over~$\XX$.
  It is quite easy to show that every term in~$\XXps\subseteq \fns\XX$ of the form~\eqref{form-sup-tab} can be written as a linear combination of terms satisfying Points~\ITEM1 and~\ITEM2 by the right supercommutativity, but
what remains becomes complicated and we will need the notion of root number of a term.

  The strategy for rewriting is to apply the right supercommutativity and the left supersymmetry. Unfortunately, whenever we apply the left supersymmetry to a term, we shall get three other terms in return, and thus this process becomes complicated. So a simplified notation is needed. Because of this reason, we introduce the following notation.
\begin{defi}
  For all terms~$\mu$ and~$\nu$ in~$\XXps$ such that~$\mr\mu=\mr\nu$ and~$\ell(\mu)=\ell(\nu)$, for all nonzero elements~$\alpha$ and~$\beta$ in the field~$\kk$,  the polynomials~$\alpha\mu$ and~$\beta\nu$ in~$\fns\XX$ are said to be equivalent, denoted by~$$\alpha\mu \sim \beta\nu,$$ if~$\alpha\mu-\beta\nu=\sum_{\ii}\alpha_i\mu_i$ in~$\fns\XX$  for some elements~$\alpha_i$ in~$\kk$ and terms~$\mu_i$ in~$\XXps$ such that~$\mr\mu<\mr{\mu_i}$ and~${\ell(\mu)=\ell(\mu_i)}$ for every~$\ii$.
\end{defi}
It is clear that if~$\alpha\mu \sim \beta\nu$ and~$\beta\nu\sim\alpha'\mu'$, then we get~$\alpha\mu \sim\alpha'\mu'$.

Recall that for every element~$\aa$ of~$\XX$, the parity~$\D{\aa}$ of~$\aa$ is~$\ii$ if~$\aa$ lies in $\XX_i$ with $i=0,1$. Moreover, for all elements~$\aa_1\wdots\aa_n$ ($\nn\geq 1$) of~$\XX$, we define the parity~$\D{\aa_1...\aa_n}$ of the string~$\aa_1...\aa_n$ to be~$\D{\aa_1}+\pdots+\D{\aa_n}$ modulo 2, extended with~$\D\varepsilon=0$ for the empty string~$\varepsilon$.

 The following lemma shows that, for every simple term~$\rnormed{\aa_r}{\aa_1}$ with~$\rr\geq 3$, we can rearrange~$\aa_r\wdots\aa_2$ at the expense of adding a linear combination of terms of length~$\rr$ and with root numbers~$>1$. We shall see in Lemma~\ref{generating set} that the added terms do not increase the difficulty of rewriting an arbitrary term into a linear combination of GDN supertableaux.
\begin{lemm}\label{interchange leaves} For all elements~$\aa_1\wdots\aa_r$ in~$\XX$,
for every simple term~$\mu=\rnormed{\aa_\rr}{\aa_1}$, the following claims hold:

\ITEM1 For every integer~$\jj$ such that~$2\leq \jj<\rr$, we
have~$$\mu\sim(-1)^{\D{\aa_\jj}\D{\aa_{\jj+1}...\aa_\rr }}\rnormed{\aa_\jj,\aa_r}{\aa_{\jj+1},\aa_{\jj-1}\wdots\aa_1};$$

\ITEM2 For all integers~$\ii$ and~$\jj$ such that~$2\leq \jj<\ii \leq \rr$,
we have~$$\mu\sim  (-1)^{\D{\aa_\ii}\D{\aa_\jj\dots \aa_{\ii-1} }
 +\D{\aa_\jj}\D{\aa_{\jj+1}\dots \aa_{\ii-1} }} \rnormed{\aa_r\wdots\aa_{\ii+1},\aa_\jj,\aa_{\ii-1}}{\aa_{\jj+1},
 \aa_{\ii},\aa_{\jj-1}\wdots\aa_1}.
 $$
\end{lemm}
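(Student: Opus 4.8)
The plan is to reduce both claims to one elementary move coming from left supersymmetry, and then to iterate it. \emph{The basic move.} For letters $\aa,\bb\in\XX$ and any simple term $w$ (a single letter allowed), left supersymmetry rewrites
\[
\aa\circ(\bb\circ w)=(-1)^{\D\aa\D\bb}\,\bb\circ(\aa\circ w)+\Bigl((\aa\circ\bb)\circ w-(-1)^{\D\aa\D\bb}(\bb\circ\aa)\circ w\Bigr).
\]
Here $\aa\circ(\bb\circ w)$ and $\bb\circ(\aa\circ w)$ are simple terms and so have root number~$1$ by Example~\ref{example n-1}, whereas $(\aa\circ\bb)\circ w$ and $(\bb\circ\aa)\circ w$ have root number~$\geq2$ by Lemma~\ref{root-fml}; all four terms have length $\ell(w)+2$. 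By the definition of~$\sim$, this means $\aa\circ(\bb\circ w)\sim(-1)^{\D\aa\D\bb}\,\bb\circ(\aa\circ w)$. I will also use the remark that if $\alpha\mu\sim\beta\nu$ with $\ell(\mu)\geq2$, then $\alpha(\aa\circ\mu)\sim\beta(\aa\circ\nu)$ for every letter~$\aa$, since $\mr{\aa\circ\sigma}=\mr\sigma$ for every term $\sigma\notin\XX$; together with transitivity of~$\sim$, this carries equivalences through the outermost product.

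Part~\ITEM1 would be proved by induction on $\rr=\ell(\mu)$ (the statement is vacuous for $\rr\leq2$). If $\jj=\rr-1$, write $\mu=\aa_\rr\circ(\aa_{\rr-1}\circ z)$ with $z=\rnormed{\aa_{\rr-2}}{\aa_1}$ a simple term; the basic move gives the claim at once. If $\jj\leq\rr-2$ (so $\rr\geq4$), apply the induction hypothesis to the length-$(\rr-1)$ simple term $\rnormed{\aa_{\rr-1}}{\aa_1}$ to pull $\aa_\jj$ to its front, obtaining $\rnormed{\aa_{\rr-1}}{\aa_1}\sim(-1)^{\D{\aa_\jj}\D{\aa_{\jj+1}\dots\aa_{\rr-1}}}\,\aa_\jj\circ w$, where $w$ is the simple term obtained by deleting $\aa_\jj$ from $\rnormed{\aa_{\rr-1}}{\aa_1}$. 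Apply $\aa_\rr\circ(-)$, then the basic move to $\aa_\rr\circ(\aa_\jj\circ w)$: since $\aa_\jj\circ(\aa_\rr\circ w)$ is precisely the right-hand side of~\ITEM1 and the two signs multiply to $(-1)^{\D{\aa_\jj}\D{\aa_{\jj+1}\dots\aa_\rr}}$, the induction closes.

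For Part~\ITEM2, let $T$ denote its right-hand side, i.e.\ the term obtained from $\mu$ by interchanging $\aa_\ii$ and $\aa_\jj$ in place. Using Part~\ITEM1 (and reading the pull-to-front of a letter that already heads a term as the identity), pull $\aa_\jj$ to the front of $\mu$ and then $\aa_\ii$ to the front of the remainder; symmetrically, pull $\aa_\ii$ to the front of $T$ and then $\aa_\jj$ to the front of the remainder. Deleting both $\aa_\ii$ and $\aa_\jj$ from $\mu$, respectively from $T$, yields the same simple term $W$, so we get $\mu\sim\varepsilon_1\,\aa_\jj\circ(\aa_\ii\circ W)$ and $T\sim\varepsilon_2\,\aa_\ii\circ(\aa_\jj\circ W)$, where $\varepsilon_1,\varepsilon_2$ are the products of the signs of the two front-pulls on each side; by~\ITEM1 these signs are fixed by the sets of letters lying above the moved letter at each stage, namely $\{\aa_{\jj+1},\dots,\aa_\rr\}$ then $\{\aa_{\ii+1},\dots,\aa_\rr\}$ on the $\mu$-side, and $\{\aa_{\ii+1},\dots,\aa_\rr\}\cup\{\aa_\jj\}\cup\{\aa_{\jj+1},\dots,\aa_{\ii-1}\}$ then $\{\aa_{\ii+1},\dots,\aa_\rr\}$ on the $T$-side. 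A final basic move $\aa_\jj\circ(\aa_\ii\circ W)\sim(-1)^{\D{\aa_\ii}\D{\aa_\jj}}\,\aa_\ii\circ(\aa_\jj\circ W)$ and transitivity give $\mu\sim\bigl(\varepsilon_1\varepsilon_2(-1)^{\D{\aa_\ii}\D{\aa_\jj}}\bigr)T$. In the resulting exponent the contributions $\D{\aa_\ii}\D{\aa_{\ii+1}\dots\aa_\rr}$ and $\D{\aa_\ii}\D{\aa_\jj}$ each occur twice and vanish modulo~$2$, and then, using $\D{\aa_{\jj+1}\dots\aa_\rr}=\D{\aa_{\jj+1}\dots\aa_{\ii-1}}+\D{\aa_\ii}+\D{\aa_{\ii+1}\dots\aa_\rr}$, a short cancellation reduces the exponent to $\D{\aa_\ii}\D{\aa_\jj\dots\aa_{\ii-1}}+\D{\aa_\jj}\D{\aa_{\jj+1}\dots\aa_{\ii-1}}$, which is the one in~\ITEM2.

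The combinatorial skeleton here is short; the step demanding real care is the sign bookkeeping in Part~\ITEM2 — keeping straight which letters lie above $\aa_\ii$ and above $\aa_\jj$ after each partial rearrangement — together with checking that the degenerate case $\ii=\rr$, where two of the four front-pulls collapse to identities, is still governed by the same sign formulas once an empty parity string is read as~$0$.
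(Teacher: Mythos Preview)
Your proof is correct and follows essentially the same approach as the paper: both isolate the adjacent-transposition move coming from left supersymmetry (your ``basic move''), observe that the two extra terms have root number~$\geq 2$, and then iterate. The paper inducts on $\rr-\jj$ for Part~\ITEM1, swapping $\aa_\jj$ one step leftward at a time, whereas you induct on $\rr$ by peeling off the outermost letter first; these are just two ways of organizing the same sequence of swaps. For Part~\ITEM2 the paper says only that it ``can be proved in a similar way'', while you actually carry out the sign computation by pulling $\aa_\jj$ and $\aa_\ii$ to the front on each side and matching exponents---a welcome addition, and your bookkeeping (including the degenerate case $\ii=\rr$) checks out.
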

\begin{proof} We shall just prove Point~\ITEM1, because Point~\ITEM2 can be proved in a similar way.
 Assume~$\nu=\rnormed{\aa_{\jj-1}}{\aa_1}$. Then by the left supersymmetry, we have
\begin{multline*}
 \mu=(-1)^{\D{\aa_\jj}\D{\aa_{\jj+1}}}\rnormed{\aa_\rr}{\aa_{\jj+2},\aa_\jj,\aa_{\jj+1},\nu}
      + \rnormed{\aa_\rr}{\aa_{\jj+2},(\aa_{\jj+1}\circ \aa_\jj),\nu}\\
      -(-1)^{\D{\aa_\jj}\D{\aa_{\jj+1}}}\rnormed{\aa_\rr}{\aa_{\jj+2},(\aa_{\jj}\circ \aa_{\jj+1}),\nu}.
\end{multline*}
  Since \begin{multline*}
  \mr{\rnormed{\aa_\rr}{\aa_{\jj+2},(\aa_{\jj+1}\circ \aa_\jj),\nu}}=\mr{\rnormed{\aa_\rr}{\aa_{\jj+2},(\aa_{\jj}\circ \aa_{\jj+1}),\nu}}=2\\
  >1=\mr\mu=\mr{\rnormed{\aa_\rr}{\aa_{\jj+2},\aa_\jj,\aa_{\jj+1},\nu}},
  \end{multline*}
we have
$$
\mu\sim(-1)^{\D{\aa_\jj}\D{\aa_{\jj+1}}}\rnormed{\aa_\rr}{\aa_{\jj+2},\aa_\jj,\aa_{\jj+1},\nu}.
$$
By induction on~$\rr-\jj$, we obtain
$$
\mu\sim(-1)^{\D{\aa_\jj}\D{\aa_{\jj+1}}}
\rnormed{\aa_\rr}{\aa_{\jj+2},\aa_\jj,\aa_{\jj+1},\nu}
\sim(-1)^{\D{\aa_\jj}\D{\aa_{\jj+1}...\aa_{\rr}}}
\rnormed{\aa_\jj,\aa_r}{\aa_{\jj+1},\aa_{\jj-1}\wdots\aa_1}.
$$
The proof is completed.
\end{proof}

For every simple term~$\mu=\rnormed{\aa_\rr}{\aa_1}$ in~$\XXps$, for every integer~$\ii$ such that~$2\leq i\leq \rr$, we define
 $$
 \mu_{\hat{\aa_\ii}}= \rnormed{\aa_r\wdots\aa_{i+1},\aa_{i-1}}{\aa_1}
 $$
 and
 $$
 \mu_{\aa_\ii\mapsto \bb_\jj}= \rnormed{\aa_r\wdots\aa_{i+1},\bb_\jj,\aa_{i-1}}{\aa_1}.
 $$

The following lemma is crucial to the construction of a linear basis of the free GDN superalgebra~$\fns\XX$. It shows that, for the product of two simple terms, we can ``interchange" certain letters of the two simple terms in the sense of adding a linear combination of some nonessential terms. We shall see that, as a result of the following lemma, the set of all the GDN tableaux over~$\XX$ is not linearly independent in~$\fns\XX$ provided that~$\XX_1$ is nonempty and the characteristic of the field is not~2.

\begin{lemm}\label{interchange two row leaves}
For all elements~$\aa_1\wdots\aa_{\rr+1},\bb_1\wdots\bb_\mm$ $(\rr\geq 2,\mm\geq 2)$ in~$\XX$, for all integers~$\ii,\jj$ such that $2\leq \ii\leq \rr+1$ and~$2\leq \jj\leq \mm$, for all simple terms~$\mu=\rnormed{\aa_{\rr+1}}{\aa_1}$ and~${\nu=\rnormed{\bb_{\mm}}{\bb_1}}$,  we can interchange $\aa_i$ and~$\bb_\jj$ in~$(\mu\circ\nu)$ in the sense that
\begin{eqnarray}\label{intechange-equ}
(\mu\circ\nu)\sim (-1)^{\D{\aa_\ii}\D{\aa_{\ii-1}... \aa_1\bb_\mm...\bb_\jj}+\D{\bb_\jj}\D{\aa_{\ii-1}...\aa_1\bb_\mm...\bb_{\jj+1}}}
(\mu_{\aa_\ii\mapsto \bb_\jj}\circ \nu_{\bb_\jj\mapsto \aa_\ii}).
\end{eqnarray}
In particular, if $\rr=\mm$, $\aa_1=\bb_1$ and~$\D{\aa_1}=1$, then we
get~$(\mu\circ\nu)\sim -(\mu\circ\nu)$. Since~${\chart(\kk)\neq 2}$,  the term~$(\mu\circ\nu)$ can be written as a linear combination of terms that are of root numbers~$>\mr\mu+\mr\nu$ and with lengths~$\ell(\mu)+\ell(\nu)$.
\end{lemm}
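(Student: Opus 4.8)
The plan is to reduce the general statement to the special case $\ii=\rr+1$ and $\jj=\mm$ (i.e.\ interchanging the last leaf of the left factor with the last leaf of the right factor), and then handle that special case by a direct application of the left supersymmetry together with the root-number estimates already in hand. First I would use Lemma~\ref{interchange leaves}\ITEM1 to rewrite $\mu=\rnormed{\aa_{\rr+1}}{\aa_1}$ so that $\aa_\ii$ is moved into the innermost (rightmost) position: up to the sign $(-1)^{\D{\aa_\ii}\D{\aa_{\ii-1}\dots\aa_1}}$ and a correction term of root number $>1=\mr\mu$ and of the same length, we may replace $\mu$ by a simple term $\mu'$ whose innermost letter is $\aa_\ii$ and whose remaining letters are $\aa_{\rr+1},\dots,\aa_{\ii+1},\aa_{\ii-1},\dots,\aa_1$ in that order. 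Similarly, by Lemma~\ref{interchange leaves}\ITEM1 I would bring $\bb_\jj$ to the innermost position of $\nu$, at the cost of the sign $(-1)^{\D{\bb_\jj}\D{\bb_{\jj-1}\dots\bb_1}}$ and correction terms of root number $>1=\mr\nu$. Here I should be careful that $\sim$ as defined compares terms of equal length and equal root number while bounding the error terms from above in root number; by Lemma~\ref{root-pro}\ITEM2 and \ITEM3 these error terms, when multiplied on the left or right by the other simple factor, still have root number strictly exceeding $\mr\mu+\mr\nu$, which is exactly what the final clause of the statement asks for, so they are harmless. (I would also need to check that $\mr{(\mu'\circ\nu')}=\mr\mu+\mr\nu$, which follows from Lemma~\ref{root-fml} since $\mr{\mu'}=\mr\mu$, $\mr{\nu'}=\mr\nu$ and $\mr\nu\le\mr{\nu'}$, with the appropriate $\max$.)

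Next, for the reduced configuration I would write $\mu'=\lnormed{\nu_1}{\aa_{\ii+1},\aa_{\ii-1}\wdots\aa_1,\aa_\ii}$ — more precisely $\mu'=(\lambda\circ\aa_\ii)$ where $\lambda$ is the simple term obtained from $\mu'$ by deleting its innermost letter — and similarly $\nu'=(\rho\circ\bb_\jj)$ if $\jj\ge 2$. Then $(\mu'\circ\nu')=((\lambda\circ\aa_\ii)\circ(\rho\circ\bb_\jj))$, and I would apply the left supersymmetry identity with $x=\lambda\circ\aa_\ii$... actually the cleaner route is: since $(\mu'\circ\nu')$ has $\nu'=\rnormed{\bb_\mm}{\bb_{\jj+1},\bb_\jj,\rho'}$ with $\rho'=\rnormed{\bb_{\jj-1}}{\bb_1}$, apply left supersymmetry at the spot $(\aa_\ii\circ(\bb_\jj\circ\rho'))$ sitting inside, exactly as in the proof of Lemma~\ref{interchange leaves}, to swap $\aa_\ii$ past $\bb_\jj$; this produces the main term $(-1)^{\D{\aa_\ii}\D{\bb_\jj}}$ times the term with $\aa_\ii$ and $\bb_\jj$ swapped, plus two terms in which a genuine product $(\bb_\jj\circ\aa_\ii)$ or $(\aa_\ii\circ\bb_\jj)$ appears as a non-leaf; by Lemma~\ref{root-fml} those two terms have root number $\ge\mr\mu+\mr\nu+1$, hence are swallowed by $\sim$. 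The net swap then exhibits $(\mu'\circ\nu')\sim(-1)^{\D{\aa_\ii}\D{\bb_\jj}}(\mu''\circ\nu'')$ where $\mu''$ has innermost letter $\bb_\jj$ and $\nu''$ has innermost letter $\aa_\ii$. Finally I would run Lemma~\ref{interchange leaves}\ITEM1 in reverse inside each factor to move $\bb_\jj$ back to position $\ii$ of the left factor and $\aa_\ii$ back to position $\jj$ of the right factor, picking up the signs $(-1)^{\D{\bb_\jj}\D{\aa_{\ii-1}\dots\aa_1}}$ and $(-1)^{\D{\aa_\ii}\D{\bb_{\jj-1}\dots\bb_1}}$ respectively, and again only error terms of root number $>\mr\mu+\mr\nu$ appear. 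Collecting the four accumulated signs gives the total sign
$$
(-1)^{\D{\aa_\ii}\D{\aa_{\ii-1}\dots\aa_1}+\D{\bb_\jj}\D{\bb_{\jj-1}\dots\bb_1}+\D{\aa_\ii}\D{\bb_\jj}+\D{\bb_\jj}\D{\aa_{\ii-1}\dots\aa_1}+\D{\aa_\ii}\D{\bb_{\jj-1}\dots\bb_1}},
$$
and a bookkeeping check on the exponents modulo~$2$ should match $\D{\aa_\ii}\D{\aa_{\ii-1}\dots\aa_1\bb_\mm\dots\bb_\jj}+\D{\bb_\jj}\D{\aa_{\ii-1}\dots\aa_1\bb_\mm\dots\bb_{\jj+1}}$ once one notes $\D{\bb_\mm\dots\bb_{\jj+1}}=\D{\bb_1\dots\bb_{\jj-1}}+\D{\bb_\jj}+\D{\nu}$-type relations coming from the overall parity of $\nu$; the matching is forced because both sides describe the same sign incurred by the same permutation of parity-carrying letters, so I would simply verify it by expanding.

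For the ``in particular'' clause: when $\rr=\mm$, $\aa_1=\bb_1$ and $\D{\aa_1}=1$, take $\ii=\jj=1$... but the hypothesis demands $\ii,\jj\ge2$, so instead I would apply the displayed equivalence with $\ii=\jj=2$ after first using Lemma~\ref{interchange leaves} to bring the common innermost letter $\aa_1=\bb_1$ into position $2$ in both factors; interchanging these two equal parity-$1$ letters returns $(\mu\circ\nu)$ itself but with an extra factor $(-1)^{\D{\aa_1}^2}=(-1)^{1}=-1$ from the swap sign $(-1)^{\D{\aa_\ii}\D{\bb_\jj}}$, all other sign contributions cancelling because the two swapped letters are identical. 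Hence $(\mu\circ\nu)\sim-(\mu\circ\nu)$, i.e.\ $2(\mu\circ\nu)$ is a $\kk$-linear combination of terms of root number $>\mr\mu+\mr\nu$ and length $\ell(\mu)+\ell(\nu)$; since $\chart(\kk)\neq2$ we may divide by $2$, giving the last assertion.

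The main obstacle I anticipate is purely the sign bookkeeping: keeping track of the Koszul signs through the three passes (move $\aa_\ii,\bb_\jj$ inward, swap, move them outward into each other's old slots) and confirming the accumulated exponent reduces modulo~$2$ to the exponent claimed in \eqref{intechange-equ}. Everything else — that the discarded terms always have strictly larger root number and unchanged length — is immediate from Lemmas~\ref{root-fml} and~\ref{root-pro} together with the definition of $\sim$, and is essentially the same argument already used in Lemma~\ref{interchange leaves}. I would organize the sign computation by first doing the parity-homogeneous sub-case (all letters of parity $1$, so signs are genuine $\pm1$'s recording a permutation) where the identity is transparent, and then noting that in general each parity-$0$ letter contributes a trivial factor, so the homogeneous computation already determines the answer.
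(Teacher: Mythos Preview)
Your plan has a structural gap at the swap step. Lemma~\ref{interchange leaves} only permutes the letters in positions $2,\dots,r$ of a simple term $\rnormed{a_r}{a_1}$; the innermost letter $a_1$ is untouched (both parts of that lemma require the moved index to be $\ge 2$). So you cannot bring $a_i$ into position~$1$ of $\mu$ as your first paragraph proposes, and the sign $(-1)^{\D{a_i}\D{a_{i-1}\dots a_1}}$ you quote is not what Lemma~\ref{interchange leaves}\ITEM1 produces. More importantly, even granting that move, the ``spot $(a_i\circ(b_j\circ\rho'))$'' you want to attack with left supersymmetry does not exist in $(\mu\circ\nu)$: the letters of $\mu$ and of $\nu$ live in disjoint subtrees, joined only at the top-level product, so no application of left supersymmetry alone can make $a_i$ and $b_j$ adjacent. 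The paper's missing ingredient is right supercommutativity used \emph{between} the two factors. One sends $a_i$ to the \emph{outermost} slot of $\mu$ via Lemma~\ref{interchange leaves}\ITEM1, obtaining $\mu\sim\pm(a_i\circ\mu_{\hat a_i})$; then right supercommutativity gives $((a_i\circ\mu_{\hat a_i})\circ\nu)=\pm((a_i\circ\nu)\circ\mu_{\hat a_i})$. Now $(a_i\circ\nu)$ is itself a simple term in which both $a_i$ and $b_j$ occupy positions $\ge 2$, so Lemma~\ref{interchange leaves} swaps them. Undoing the two steps (right supercommutativity back, then reinsert $b_j$ into position $i$) yields \eqref{intechange-equ}, and Lemma~\ref{root-pro}\ITEM1 guarantees the root number is preserved under the right-supercommutativity moves.

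The ``in particular'' clause has the same obstruction. Since $a_1=b_1$ sit at position~$1$ in each factor, neither Lemma~\ref{interchange leaves} nor the general equivalence \eqref{intechange-equ} (which requires $i,j\ge 2$) can touch them, so your plan to ``bring the common innermost letter into position~$2$'' is not available. The paper's argument instead applies \eqref{intechange-equ} repeatedly at the \emph{allowed} positions, swapping $a_t\leftrightarrow b_t$ for $t=r,r-1,\dots,2$; after these $r-1$ swaps the left factor is $(a_{r+1}\circ\nu_{b_1\mapsto a_1})$ and the right factor is $(\mu_{\hat a_{r+1}})_{a_1\mapsto b_1}$, and one final right supercommutativity returns $(\mu\circ\nu)$ on the nose (using $a_1=b_1$). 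The accumulated sign is then checked to be $-1$.
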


\begin{proof}
By Lemmas~\ref{root-pro} and~\ref{interchange leaves}, we get
\begin{multline*}
(\mu\circ\nu) \sim  (-1)^{\D{\aa_\ii}\D{\aa_{\ii+1}... \aa_{\rr+1}}}((\aa_i\circ \mu_{\hat{\aa_\ii}})\circ \nu)
 \sim  (-1)^{|a_i||a_{i+1}... a_{r+1}|+|\mu_{\hat{\aa_\ii}}||\nu|}((a_i\circ \nu)\circ \mu_{\hat{a}_i})\\
 \sim  (-1)^{|a_i||a_{i+1}... a_{r+1}|+|\mu_{\hat{\aa_\ii}}||\nu|+|a_i||b_j... b_m|+|b_j||b_{j+1}... b_m|}
((b_j\circ \nu_{b_j\mapsto a_i})\circ \mu_{\hat{\aa_\ii}})\\
 \sim  (-1)^{|a_i||a_{i+1}... a_{r+1}|+|\mu||\nu|+|a_i||b_1... b_{j-1}|+|b_j||b_{j+1}... b_m|+|\mu_{\hat{a}_i}||\nu_{b_j\mapsto a_i}|}
((b_j\circ \mu_{\hat{\aa_\ii}})\circ \nu_{b_j\mapsto a_i})\\
 \sim (-1)^{|a_i||a_{i+1}... a_{r+1}b_1... b_{j-1}|+|\mu||\nu|+|b_j||a_{i+1}... a_{r+1}b_{j+1}... b_m|+|\mu_{\hat{a}_i}||\nu_{b_j\mapsto a_i}|}
(\mu_{a_i\mapsto b_j}\circ \nu_{b_j\mapsto a_i})\\
\sim (-1)^{|a_i||a_{i-1}... a_{1}b_m... b_{j+1}|+|b_j||a_{i}... a_{1}b_{j+1}... b_m|}
(\mu_{a_i\mapsto b_j}\circ \nu_{b_j\mapsto a_i}).
\end{multline*}
In particular, if $\rr=\mm$, $\aa_1=\bb_1$ and~$\D{\aa_1}=1$, then we obtain
\begin{multline*}
(\mu\circ\nu)\sim  (-1)^{|a_r||a_{r-1}... a_{1}|+|b_r||a_{r}... a_{1}|}
(\mu_{a_r\mapsto b_r}\circ \nu_{b_r\mapsto a_r})\\
\sim (-1)^{|a_r||a_{r-1}... a_{1}|+|b_rb_{r-1}||a_{r}... a_{1}|+|a_{r-1}||a_{r}\hat{a}_{r-1}... a_{1}|}
((\mu_{a_r\mapsto b_r})_{a_{r-1}\mapsto b_{r-1}}\circ (\nu_{b_r\mapsto a_r})_{b_{r-1}\mapsto a_{r-1}})\\
\sim  \cdots
\sim (-1)^{|a_ra_{r-1}... a_{2}|+|a_ra_{r-1}... a_{2}||a_{r}... a_{1}|+|b_rb_{r-1}... b_2||a_{r}... a_{1}|}
((a_{r+1}\circ \nu_{b_1\mapsto a_1})\circ (\mu_{\hat{a}_{r+1}})_{a_1\mapsto b_1})\\
\sim (-1)^{|a_r... a_{2}||a_{1}|+|b_r... b_2||a_{r}... a_{1}|+|a_ra_{r-1}... a_{2}b_1||b_rb_{r-1}... b_2a_1|}
((a_{r+1}\circ(\mu_{\hat{a}_{r+1}})_{a_1\mapsto b_1})\circ \nu_{b_1\mapsto a_1}).
\end{multline*}
Since~$\aa_1=\bb_1$, we obtain
$(a_{r+1}\circ(\mu_{\hat{a}_{r+1}})_{a_1\mapsto b_1})=\mu$ and~$\nu_{b_1\mapsto a_1}=\nu$.
Moreover, ${\D{\aa_1}=\D{\bb_1}=1}$ implies
that~$(-1)^{|a_r... a_{2}||a_{1}|+|b_r... b_2||a_{r}... a_{1}|+|a_ra_{r-1}... a_{2}b_1||b_rb_{r-1}... b_2a_1|}=(-1).$ Therefore, we obtain~$(\mu\circ\nu)\sim-(\mu,\nu)$.
Since $\chart(\kk)\neq 2$, the claim follows.
\end{proof}

Now we are in a position to show that the set of all the \nsuptx\ over a well-ordered set~$X=X_0 \cup X_1$ forms a linear generating set of the free GDN superalgebra~$\fns\XX$ generated by~$\XX$.
\begin{lemm}\label{generating set}
For every term~$\lambda$ in~$\XXps$, we
  have~$\lambda=\sum_i\alpha_i\lambda_i$
  for some elements~$\alpha_i$ in the field~$\kk$ and for some GDN supertableaux~$\lambda_i$ such
  that~$\ell(\lambda_i)=\ell(\lambda)$ and~$\mr{\lambda_i}\geq \mr\lambda$.
\end{lemm}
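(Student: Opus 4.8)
The plan is to argue by induction on the length $\ell(\lambda)$, and, for a fixed length, by downward induction on the root number $\mr\lambda$. This is well founded because $\mr\lambda\leq\ell(\lambda)-1$ by Lemma~\ref{prop n-1}, and it is exactly matched to the relation $\sim$: each rewriting step we shall use — the right supercommutativity, Lemma~\ref{interchange leaves}, Lemma~\ref{interchange two row leaves} — replaces a term by another of the same length modulo a combination of terms of the same length but \emph{strictly larger} root number, and the latter are already combinations of GDN supertableaux of root number $\geq\mr\lambda$ by the downward hypothesis. The two base cases are immediate: if $\ell(\lambda)=1$ then $\lambda\in\XX$ is already a GDN supertableau; and if $\mr\lambda=\ell(\lambda)-1$, Lemma~\ref{prop n-1} forces $\lambda=\lnormed{\aa_1}{\aa_{\ell(\lambda)}}$ for some letters, which is of the shape~\eqref{form-sup-tab} with all rows of length one, and the right supercommutativity arranges $\aa_2,\dots,\aa_{\ell(\lambda)}$ in non-increasing order and annihilates (since $\chart(\kk)\neq2$) any adjacent repetition of an odd letter, producing a GDN supertableau of root number $\ell(\lambda)-1$.

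For the inductive step write $\lambda=(\lambda_1\circ\lambda_2)$ with $\ell(\lambda_1),\ell(\lambda_2)<\ell(\lambda)$. By the length hypothesis each factor is a combination of GDN supertableaux of root number $\geq\mr{\lambda_1}$, resp.\ $\geq\mr{\lambda_2}$; since $\mr\mu+\max(1,\mr\nu)$ is monotone in both variables (Lemma~\ref{root-fml}), multiplying out and disposing of any product of root number $>\mr\lambda$ by the downward hypothesis reduces us to $\lambda=(\sigma\circ\tau)$ with $\sigma,\tau$ GDN supertableaux and $\mr{(\sigma\circ\tau)}=\mr\lambda$. Write $\tau=\lnormed{\bb,\nu_1}{\nu_q}$ with $\bb\in\XX$ and each $\nu_i$ a simple term; if $q=0$ then $(\sigma\circ\tau)=(\sigma\circ\bb)=\lnormed{\aa,\mu_1}{\mu_p,\bb}$ is already of the shape~\eqref{form-sup-tab}, and if $q\geq1$ we set $\tau_-=\lnormed{\bb,\nu_1}{\nu_{q-1}}$ (again a GDN supertableau) and expand $\sigma\circ(\tau_-\circ\nu_q)$ by the left supersymmetry into the three summands $(\sigma\circ\tau_-)\circ\nu_q$, $\pm\,\tau_-\circ(\sigma\circ\nu_q)$ and $\pm\,(\tau_-\circ\sigma)\circ\nu_q$. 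The inner products $\sigma\circ\tau_-$, $\sigma\circ\nu_q$ and $\tau_-\circ\sigma$ all have length $<\ell(\lambda)$, so the length hypothesis rewrites each as a combination of GDN supertableaux; appending the simple term $\nu_q$ on the right in the first and third summand lands in the shape~\eqref{form-sup-tab}, while in the middle summand $\sigma\circ\nu_q$ is itself of the shape~\eqref{form-sup-tab}, so one iterates. The key point is to run this unfolding through a secondary induction — on, say, the number of simple factors of the right-hand factor, together with the root-number estimates of Lemmas~\ref{root-fml} and~\ref{root-pro} and Example~\ref{example n-1} — so that it terminates and every summand produced has length $\ell(\lambda)$ and root number $\geq\mr\lambda$. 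This expresses $\lambda$ as a combination of terms of the shape~\eqref{form-sup-tab} of length $\ell(\lambda)$ and root number $\geq\mr\lambda$.

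It remains to rewrite a single term $\eta=\lnormed{\aa,\mu_1}{\mu_n}$ of the shape~\eqref{form-sup-tab} as a combination of GDN supertableaux of the same length and root number $\geq\mr\eta$. Using the right supercommutativity, permute the $\mu_i$ (with signs, and without altering the root number, by Lemma~\ref{root-pro}\,(i)) so that (i) $r_1\geq\cdots\geq r_n$ and (ii) $a_{i,1}\geq a_{i+1,1}$ whenever $r_i=r_{i+1}$. Next, Lemma~\ref{interchange leaves} reorders the leaves inside each $\mu_i$ so as to enforce (iii) up to $\sim$, and Lemma~\ref{interchange two row leaves} enforces (iv) and (v): whenever an odd letter occurs twice among the leaves $a_{i,j}$ with $j\geq2$, or two adjacent equal-length rows share an odd first letter, that lemma together with $\chart(\kk)\neq2$ rewrites the term as a combination of terms of strictly larger root number. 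One checks that these local rewritings propagate through the left-normed product: replacing a simple factor $\mu_i$, which has root number $1$, by a term of strictly larger root number strictly increases $\mr\eta$ by Lemma~\ref{root-pro}\,(ii)--(iii). All correction terms are of the same length and of strictly larger root number, hence already handled, which closes the induction.

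I expect the main obstacle to be the unfolding in the second paragraph: since the ``$y(xz)$'' summand of the left supersymmetry interchanges the two arguments, a naive size measure on $(\sigma\circ\tau)$ need not decrease, and one must identify the right combined measure — built from the length, the number of simple factors, and the root-number estimates of Lemmas~\ref{root-fml} and~\ref{root-pro} — that makes the process terminate without ever producing a summand of root number below $\mr\lambda$. By comparison the sorting in the third paragraph and the verification that the $\sim$-moves only raise the root number are routine.
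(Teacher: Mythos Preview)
Your double induction (on $\ell(\lambda)$, then downward on $\mr\lambda$), the base cases, and the final ``sorting'' step are exactly the paper's framework. The genuine gap is in your second paragraph, and it is precisely the obstacle you flag yourself: the unfolding via left supersymmetry does not terminate. Concretely, applying your step once to $\sigma\circ\tau=\sigma\circ(\tau_-\circ\nu_q)$ gives, as middle summand, $\tau_-\circ(\sigma\circ\nu_q)$; but $\sigma\circ\nu_q$ is again of the shape~\eqref{form-sup-tab} with last simple factor $\nu_q$, so iterating your step with $x=\tau_-$, $y=\sigma$, $z=\nu_q$ produces $\sigma\circ(\tau_-\circ\nu_q)=\sigma\circ\tau$ back as the new middle summand. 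Substituting, the two rounds collapse to the tautology $\sigma\circ\tau=\sigma\circ\tau$; no combined measure built from lengths, root numbers, or numbers of simple factors will break this loop, because the identity used is literally symmetric under swapping $x\leftrightarrow y$.

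The paper avoids this entirely by a one–line trick you are missing: when the \emph{left} factor $\sigma=\lnormed{\aa,\mu_1}{\mu_p}$ has $p\geq 1$, use \emph{right supercommutativity} rather than left supersymmetry, namely
\[
(\sigma\circ\tau)=(-1)^{\D\tau\D{\mu_p}}\bigl(\lnormed{\aa,\mu_1}{\mu_{p-1},\tau}\circ\mu_p\bigr).
\]
The bracket on the left has length $\ell(\lambda)-\ell(\mu_p)<\ell(\lambda)$, so the length hypothesis rewrites it as a combination of GDN supertableaux, and appending the simple term $\mu_p$ lands directly in the shape~\eqref{form-sup-tab}; Lemma~\ref{root-pro}\ITEM1 guarantees the root number does not drop. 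Only when $\ell(\sigma)=1$ (so $\sigma=a\in\XX$) and $q>1$ does the paper invoke left supersymmetry once; then in the ``swapped'' summand $\tau_-\circ(a\circ\nu_q)$ the new left factor $\tau_-$ has length $\geq 2$, so it immediately falls under the case just handled---no recursion on your unfolding is needed.

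A smaller point on your third paragraph: Lemma~\ref{interchange leaves} alone does not enforce condition~\ITEM3, since that condition compares leaves across \emph{different} rows (e.g.\ $a_{i,2}\geq a_{i+1,r_{i+1}}$); one needs Lemma~\ref{interchange two row leaves} here as well. Likewise, the ``in particular'' clause of Lemma~\ref{interchange two row leaves} directly handles condition~\ITEM5, but condition~\ITEM4 for a repeated odd leaf \emph{within} one row is dispatched by a direct application of left supersymmetry to that simple term (and Lemma~\ref{root-pro}\ITEM3 then propagates the strict root increase to the ambient product), while the cross-row case of~\ITEM4 is reduced to a within-row repetition by a short manoeuvre with Lemma~\ref{interchange leaves} and right supercommutativity.
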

\begin{proof}
 We use induction on~$\ell(\lambda)$. For~$\ell(\lambda)\leq 2$, it is clear.
 For ${\ell(\lambda)>2}$, we use a second (downward) induction on~$\mr\lambda$.
 For~$\mr\lambda=\ell(\lambda)-1$, by Lemma~\ref{prop n-1} and by the right supercommuativity,
 we may assume~$\lambda=\lnormed{\aa_1}{\aa_{\ell(\lambda)}}$ and~$\aa_2\geq \pdots\geq\aa_{\ell(\lambda)}$. If Condition Definition~\ref{def-sup-tab}\ITEM5 is not satisfied, then~$\lambda=0$, otherwise, $\lnormed{\aa_1}{\aa_{\ell(\lambda)}}$ is already a GDN supertableau. For~$\mr\lambda<\ell(\lambda)-1$, we may assume that~$\lambda=(\mu\circ\nu)$ for some terms~$\mu,\nu$ with~${\ell(\mu)<\ell(\lambda)}$ and~$\ell(\nu)<\ell(\lambda)$.
 By induction hypothesis, both~$\mu$ and~$\nu$ can be written as linear combinations of GDN supertableaux, say~$\mu=\sum_{\ii}\alpha_\ii\mu_i'$ and~$\nu=\sum_{\jj}\beta_\jj\nu_j'$.
  Then for all~$\ii,\jj$, we have~${\mr{(\mu_\ii'\circ\nu_\jj')}=\mr{\mu_\ii'}+\max(1,\mr{\nu_\jj'})\geq \mr{\mu}+\max(1,\mr{\nu})=\mr{(\mu\circ\nu)}}$ and~${\ell((\mu_\ii'\circ\nu_\jj'))=\ell((\mu\circ\nu))}$.

Now we can assume that~$\mu$ and~$\nu$ are GDN supertableaux.
  Suppose that~${\mu=\lnormed{\aa,\mu_1}{\mu_p}}$ and~$\nu=\lnormed{\bb,\nu_1}{\nu_\qq}$,
  where all the~$\mu_\ii$ and~$\nu_\jj$ are simple terms, and~$\aa,\bb$ are elements in~$\XX$.

For~$\ell(\mu)>1$, we have~$\pp>0$ and~$$\lambda=(\mu\circ\nu)
=(-1)^{\D{\nu}\D{\mu_\pp}}
(\lnormed{\aa,\mu_1}{\mu_{\pp-1},\nu}\circ\mu_\pp).$$
By induction hypothesis again, we can write the term~$\lnormed{\aa,\mu_1}{\mu_{\pp-1},\nu}$ as a linear combination of GDN supertableaux. Therefore, we may assume that~$\ell(\mu)>1$ and~$\nu$ is a simple term. In other words, we can assume that~$\lambda=\lnormed{\aa,\lambda_1}{\lambda_n}$ $(\nn\geq 1)$, where~$\aa$ is an element in~$\XX$ and each~$\lambda_i=\rnormed{\aa_{\ii,\rr_\ii}}{\aa_{\ii,1}}$ is a simple term.
 We first show that in this case~$\lambda$ can be written as a linear combination of GDN supertableaux with the claimed conditions. This is the main case with which we should deal.

Applying the right supercommuativity whenever necessary, we may assume that the conditions of Definition~\ref{def-sup-tab}\ITEM1-\ITEM2
are satisfied. By Lemmas~\ref{root-pro}, \ref{interchange leaves} and~\ref{interchange two row leaves}, and by induction hypothesis, the conditions of Definition~\ref{def-sup-tab}\ITEM3 can also be obtained. For instance, say~$\aa_{2,2}<\aa_{4,3}$.
Then we need to interchange~$\aa_{2,2}$ and~$\aa_{4,3}$ in the sense of Equation~\eqref{intechange-equ}.  Since~$\lnormed{\aa,\lambda_1}{\lambda_n}=(-1)^{\D{\lambda_2}\D{\lambda_1}+\D{\lambda_{4}}\D{\lambda_{3}}
+\D{\lambda_{4}}\D{\lambda_{1}}}\lnormed{\aa,\lambda_2,\lambda_4,\lambda_1,\lambda_3,\lambda_5}{\lambda_n}$, we can apply Lemma~\ref{interchange two row leaves} to the term $((\aa\circ\lambda_2)\circ\lambda_4)$.

Suppose that the conditions of Definition~\ref{def-sup-tab}\ITEM4 are destroyed. If~${\aa_{i,j}=\aa_{i,j+1}\in \XX_1}$ for some integers~$\ii$ and~$\jj$ such that~$1\leq i\leq n$ and~$2\leq \jj< \rr_i$, then by left supersymmetry, we
obtain
  \begin{multline*}
  \lambda_\ii-\rnormed{\aa_{\ii,\rr_\ii}\wdots \aa_{\ii,\jj+2}, (\aa_{\ii,\jj+1}\circ\aa_{i,\jj}),\aa_{i,\jj-1} }{\aa_{i,1}}\\
  =-1(\lambda_\ii-\rnormed{\aa_{\ii,\rr_\ii}\wdots \aa_{\ii,\jj+2}, (\aa_{\ii,\jj}\circ\aa_{i,\jj+1}),\aa_{i,\jj-1} }{\aa_{i,1}}).
\end{multline*}
 Since~$\chart(\kk)\neq 2$, we have~$\lambda_\ii=\rnormed{\aa_{\ii,\rr_\ii}\wdots \aa_{\ii,\jj+2}, (\aa_{\ii,\jj+1}\circ\aa_{i,\jj}),\aa_{i,\jj-1} }{\aa_{i,1}}$ in~$\fns\XX$ and~$\mr{\rnormed{\aa_{\ii,\rr_\ii}\wdots \aa_{\ii,\jj+2}, (\aa_{\ii,\jj+1}\circ\aa_{i,\jj}),\aa_{i,\jj-1} }{\aa_{i,1}}}=2>1=\mr{\lambda_i}$. By the second induction hypothesis on root numbers, we are done.
  Therefore, we may assume that, for every integer~$\jj\geq 2$, we have~$\aa_{\ii,\jj}\neq \aa_{\ii,\jj+1}$ if~$\aa_{\ii,\jj}$ lies in~$\XX_1$.
 Similarly, we may assume that~$\aa\neq \aa_{1,\rr_1}$ if~$\aa$ lies in~$\XX_1$.
 On the other hand, if~$\aa_{\ii,2}=\aa_{\ii+1,\rr_{\ii+1}}$, then
 we have~\begin{multline*}
   \lambda=\alpha_1\lnormed{\aa,\lambda_\ii,\lambda_{\ii+1}, \lambda_1,\wdots, \lambda_{\ii-1},\lambda_{\ii+2}}{\lambda_n}\\
   \sim\alpha_2\lnormed{\aa_{\ii,2},(\lambda_\ii)_{\aa_{\ii,2}\mapsto\aa},\lambda_{\ii+1}, \lambda_1,\wdots, \lambda_{\ii-1},\lambda_{\ii+2}}{\lambda_n}\\
   \sim\alpha_3\lnormed{\aa_{\ii,2},\lambda_{\ii+1}, (\lambda_\ii)_{\aa_{\ii,2}\mapsto\aa},\lambda_1,\wdots, \lambda_{\ii-1},\lambda_{\ii+2}}{\lambda_n}
 \end{multline*}
 for some elements~$\alpha_1,\alpha_2$ and~$\alpha_3$ in~$\kk$. The claim follows by the above reasoning.

 Finally, by Lemma~\ref{interchange two row leaves}, right supercommutativity and by induction hypothesis,  the conditions of Definition~\ref{def-sup-tab}\ITEM5 can also be satisfied.

For~$\ell(\mu)=1$, we have~$\ell(\nu)\geq 2$ and thus~$\qq\geq 1$.
For~$\qq=1$, the term~$\lambda=(\mu\circ\nu)$ is a simple term. By Lemma~\ref{interchange leaves} and induction hypothesis on root number, we are done. For~$\qq>1$, we shall resort to the case of~$\ell(\mu)>1$. By left supersymmetry, we have
\begin{multline*}
  \lambda=(\mu\circ\lnormed{\bb,\nu_1}{\nu_q})
  =((\mu\circ\lnormed{b,\nu_1}{\nu_{q-1}})\circ\nu_q)\\
  +(-1)^{\D{\mu}\D{\lnormed{b,\nu_1}{\nu_{q-1}}}}
  ((\lnormed{b,\nu_1}{\nu_{q-1}} \circ(\mu\circ\nu_q))
 -((\lnormed{b,\nu_1}{\nu_{q-1}} \circ\mu)\circ\nu_q)).
\end{multline*}
By induction hypothesis and the above reasoning for the case of~$\ell(\mu)>1$, the result follows.
\end{proof}

\section{A linear basis of~$\fns\XX$ and a Poincar\'{e}-Birkhoff-Witt type Theorem}\label{basis-and-pbw-sec}
Our aim in this section is to show that the set~$\tab$ of all the GDN supertableaux over a well-ordered set~$\XX=\XX_0\cup\XX_1$ forms a linear basis of the free GDN superalgebra~$\fns\XX$. We already know that it is a linear generating set of~$\fns\XX$, so what remains is to prove the linear independence. We shall also prove a
PBW type theorem for GDN superalgebra, that is, every GDN superalgebra can be embedded into its universal enveloping associative differential supercommutative algebra.

\subsection{Associative differential supercommutative algebra}In this subsection, we shall first construct the free associative differential supercommuative algebra generated by~$\XX$. It will be instrumental in proving the linear independence of the set~$\tab$ of all the GDN supertableaux over~$\XX$.

Recall that a supercommutative algebra is a superalgebra~$\mA$ satisfying the following identity:
$$\xx\cdot\yy=(-1)^{\D\xx\D\yy}\yy\cdot\xx $$
 for all elements $\xx,\yy$ in~$\mathcal{A}_{0}\cup\mathcal{A}_{1}$, and
 an \textit{associative differential supercommutative algebra} is an associative supercommutative algebra~$(\mA, \cdot,\oD)$ with a linear derivation~$D$ of parity~$0$ satisfying that~$D(\mA_i)\subseteq \mA_i$ $(\ii=0,1)$ and the
identity:
$$ \oD(\xx\cdot \yy)=\oD(\xx)\cdot \yy+\xx \cdot \oD(\yy),$$
 for all elements $\xx,\yy$ in~$\mathcal{A}_{0}\cup\mathcal{A}_{1}$.

S.I. Gelfand~\cite{Gelfand} pointed out that, every \da~$(\mA,\cdot,\oD)$ becomes a GDN algebra under the new operation~$\circ$ defined by~$\xx\circ\yy:=\xx\cdot \oD(\yy)$, and with the help of this discovery, Dzhumadildaev and L\"{o}fwall proved that the set of all the GDN tableaux over a well-ordered set~$\XX$ forms a linear basis of the free GDN algebra generated by~$\XX$. This idea motivates us to establish the connection of GDN superalgebra and \adsa. The proof for the following observation is straightforward and thus omitted.

\begin{lemm}\label{become-gdn}
 For every \adsa~$(\mA,\cdot,\oD)$, if we define a new bilinear operation on~$\mA$ by the rule:
 $$\xx\circ\yy=\xx\cdot\oD(\yy)$$
 for all elements~$\xx$ and~$\yy$ in~$\mA$,
 then~$(\mA,\circ)$ becomes a GDN superalgebra.
\end{lemm}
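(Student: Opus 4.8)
The plan is to verify the two defining identities of a GDN superalgebra directly, using the associativity, supercommutativity, and the Leibniz rule for $D$. Write $x \circ y = x \cdot D(y)$ throughout. Since all three identities in question are $\mathbb{Z}_2$-homogeneous, it suffices to check them on elements $x, y, z$ of $\mathcal{A}_0 \cup \mathcal{A}_1$, so all the sign factors $(-1)^{|x||y|}$ make sense.

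First I would check right supercommutativity. We compute
\[
(x \circ y) \circ z = (x \cdot D(y)) \cdot D(z) = x \cdot D(y) \cdot D(z),
\]
using associativity to drop the parentheses. Now $D(y)$ and $D(z)$ both have the same parity as $y$ and $z$ respectively (since $D$ has parity $0$), so supercommutativity of $\cdot$ gives $D(y) \cdot D(z) = (-1)^{|y||z|} D(z) \cdot D(y)$, whence $(x \circ y) \circ z = (-1)^{|y||z|} x \cdot D(z) \cdot D(y) = (-1)^{|y||z|} (x \circ z) \circ y$. This is exactly right supercommutativity.

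Next I would check left supersymmetry, i.e.\ that the associator-like expression $x\circ(y\circ z) - (x\circ y)\circ z$ is super-skew in $x, y$ up to the sign $(-1)^{|x||y|}$. Expand the left term using the Leibniz rule:
\[
x \circ (y \circ z) = x \cdot D(y \cdot D(z)) = x \cdot D(y) \cdot D(z) + x \cdot y \cdot D^2(z).
\]
The first summand is $(x \circ y) \circ z$ as computed above, so
\[
x \circ (y \circ z) - (x \circ y) \circ z = x \cdot y \cdot D^2(z).
\]
Now I swap the roles of $x$ and $y$: the analogous expression is $y \cdot x \cdot D^2(z)$, and supercommutativity of $\cdot$ gives $y \cdot x = (-1)^{|x||y|} x \cdot y$, hence $y \cdot x \cdot D^2(z) = (-1)^{|x||y|} x \cdot y \cdot D^2(z)$. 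Therefore $x\circ(y\circ z) - (x\circ y)\circ z = (-1)^{|x||y|}\bigl(y\circ(x\circ z) - (y\circ x)\circ z\bigr)$, which is left supersymmetry. Finally, $D$ has parity $0$ and preserves each $\mathcal{A}_i$, so $x \circ y = x \cdot D(y) \in \mathcal{A}_{|x| + |y|}$, confirming that $\circ$ respects the $\mathbb{Z}_2$-grading; thus $(\mathcal{A}, \circ)$ is a GDN superalgebra.

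There is no real obstacle here: the computation is a direct unwinding of the definitions, exactly as in the non-super case of S.I.\ Gelfand, and the only point requiring a little care is bookkeeping the sign $(-1)^{|y||z|}$ when applying supercommutativity and checking that $D$'s parity-$0$ hypothesis makes $|D(y)| = |y|$ so that these signs are the correct ones. Since the paper itself calls the proof ``straightforward and thus omitted,'' I would at most include the two one-line computations above rather than belabor it.
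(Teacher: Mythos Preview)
Your proof is correct and is exactly the straightforward verification the paper has in mind; since the paper omits the proof entirely as ``straightforward,'' your direct check of right supercommutativity and left supersymmetry via associativity, supercommutativity, and the Leibniz rule is precisely the intended argument.
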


Let~$\mA$ be an associative differential supercommutative algebra over~$\kk$ generated by a set~$\XX=\XX_0\cup\XX_1$. We say~$\mA$ is \emph{free} on~$\XX$ if, for every map~$\psi$ of~$\XX$ into an associative differential supercommuative algebra~$\mB=\mB_0\oplus\mB_1$ such that~$\psi(\XX_i)\subseteq\mB_i$ $(i=0,1)$, there exists a unique homomorphism~$\varphi:$ $\mA\rightarrow \mB$ extending~$\psi$. We shall construct the free associative differential supercommutative algebra~$\fdsa$ generated by a set~$\XX$ directly.

Define~$\oD^{0}(\aa)=\aa$ for every~$\aa$ in~$\XX$. Define~$\YY=\{\oD^{\nn}(a)\mid \aa\in \XX, \nn\geq 0, \nn\in \mathbb{N}\}$ and let~$\YYplus$ be the free semigroup (without unit) generated by~$\YY$. For every~$\uu=\oD^{\ii_1}(\aa_1)...\oD^{\ii_n}(\aa_n)$ in~$\YYplus$, define the parity~$\D\uu$ of~$\uu$ to be~$\D{\aa_1}+\pdots+\D{\aa_n}$ modulo by~$2$, and define~$\oD^{\ii}(\aa)<\oD^\jj(\bb)$ if~$(\ii,\aa)<(\jj,\bb)$ lexicographically. Finally, define
\begin{gather*}
\parbox{152mm}{$\DX:=\{\oD^{\ii_1}(\aa_1)...\oD^{\ii_n}(\aa_n)\in \YYplus\mid \oD^{\ii_1}(\aa_1)\wdots\oD^{\ii_n}(\aa_n)\in Y$, $\oD^{\ii_1}(\aa_1)\leq \pdots\leq \oD^{\ii_n}(\aa_n)$, if~$\aa_\pp=\aa_\qq\in \XX_1$  for some integers~$\pp\neq \qq\leq n$,  then ~$\ii_\pp\neq\ii_\qq\}$.}
\end{gather*}

Let~$\kk\DX$ be the $\kk$~linear space with a $\kk$-basis~$\DX$.
Define a bilinear operation~$\cdot$ on the space~$\kk\DX$ as follows: For all~
\begin{equation}\label{xyform}
 \uu=\oD^{\ii_1}(\aa_1)...\oD^{\ii_n}(\aa_n), \ \vv=\oD^{\jj_1}(\bb_1)...\oD^{\jj_m}(\bb_m )  \mbox{ and } \oD^{\jj}(\bb)
 \mbox{ in } \DX,
\end{equation}
if~$\bb$ lies in~$\XX_1$ and~$\oD^{\jj}(\bb)=\oD^{\ii_t}(\aa_t)$ for some integer~$\tt\leq n$, then~${\uu\cdot\oD^{\jj}(\bb)}$ is defined to be~$0$.
Otherwise, assume that~$\oD^{\ii_1}(\aa_1)...\oD^{\ii_{t-1}}(\aa_{t-1})\oD^{\jj}(\bb)
\oD^{\ii_{t}}(\aa_{t})... \oD^{\ii_n}(\aa_n)$ lies in~$\DX$ for some integer~$\tt$ satisfying~$1\leq \tt\leq \nn+1$, where~$\tt=1$ (or~$\tt=n+1$, resp.) means~$\oD^{\ii_1}(\aa_1)...\oD^{\ii_{t-1}}(\aa_{t-1})$ (or~$\oD^{\ii_{t}}(\aa_{t})... \oD^{\ii_n}(\aa_n)$, resp.) is an empty sequence. Then define~$\uu\cdot\oD^{\jj}(\bb)$ to be~$$(-1)^{\sum_{\pp}(\D{\aa_{\pp}}\D\bb)}
\oD^{\ii_1}(\aa_1)...\oD^{\ii_{t-1}}(\aa_{t-1})\oD^{\jj}(\bb)
\oD^{\ii_{t}}(\aa_{t})... \oD^{\ii_n}(\aa_n),$$
where the sum is over all the integer~$\pp$ such that~$\oD^{\ii_p}(\aa_p)>\oD^{\jj}(\bb)$.
Next, the product~$\uu\cdot\vv$ is defined inductively as follows: $$\uu\cdot\vv
:=(\uu\cdot\oD^{\jj_1}(\bb_1))\cdot\oD^{\jj_2}(\bb_2)...\oD^{\jj_m}(\bb_m).
 $$ Finally, define a unary linear operation~$\oD$ on~$\kk\DX$ as follows:
  $$\oD(\uu)=\sum_{1\leq t\leq n}
  (\oD^{\ii_1}(\aa_1)...\oD^{\ii_{t-1}}(\aa_{t-1}) \cdot \oD^{\ii_{t}+1}(\aa_{t})) \cdot \oD^{\ii_{t+1}}(\aa_{t+1})...\oD^{\ii_{n}}(\aa_{n}).$$

The following lemma offers an explicit formula for calculating the product of arbitrary two elements in~$\DX$.
\begin{lemm}\label{product-fomula}
Let~$\uu$ and~$\vv$ be as in Equation~\eqref{xyform}. If~$\uu\cdot\vv\neq 0$, then we have~$$\uu\cdot \vv=(-1)^{\sum_{(\pp,\qq)}(\D{\aa_\pp}\D{\bb_\qq})}\oD^{l_1}(\dd_1)...\oD^{l_{n+m}}
(\dd_{n+m}),$$
where~$\oD^{l_1}(\dd_1)\wdots\oD^{l_{n+m}}(\dd_{n+m})$ is a reordering of~$\oD^{\ii_1}(\aa_1)\wdots\oD^{\ii_n}(\aa_n)$, $\oD^{\jj_1}(\bb_1)\wdots\oD^{\jj_m}(\bb_m)$ such that~$\oD^{l_1}(\dd_1)...\oD^{l_{n+m}}(\dd_{n+m})$ lies in~$\DX$, and the sum is over all the pairs~$(\pp,\qq)$ such that~$\oD^{\ii_p}(\aa_p)>\oD^{\jj_\qq}(\bb_\qq)$.   Moreover, the equality~${\uu\cdot \vv =0}$ holds if and only if, for some integers~$t\leq n$ and~$l\leq m$, we have~$\ii_t=\jj_l$ and $\aa_t=\bb_l\in \XX_1$.
\end{lemm}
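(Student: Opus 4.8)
The plan is to argue by induction on $m$, the number of factors of $\vv$, following the very definition of the product: with $\vv'=\oD^{j_1}(b_1)\cdots\oD^{j_{m-1}}(b_{m-1})$ one has $\uu\cdot\vv=(\uu\cdot\vv')\cdot\oD^{j_m}(b_m)$, and $\vv'\in\DX$ because every prefix of a word in $\DX$ is again $\leq$-ordered and satisfies the repetition constraint. The base case $m=1$ is nothing but the definition of $\uu\cdot\oD^{j_1}(b_1)$ unwound: this product is $0$ exactly when $b_1\in\XX_1$ and $\oD^{j_1}(b_1)=\oD^{i_t}(a_t)$ for some $t$, i.e.\ $a_t=b_1\in\XX_1$ and $i_t=j_1$; otherwise it equals $(-1)^{\sum_p\D{a_p}\D{b_1}}$, the sum being over $p$ with $\oD^{i_p}(a_p)>\oD^{j_1}(b_1)$, times the unique element of $\DX$ obtained by inserting $\oD^{j_1}(b_1)$ into its ordered position, which is precisely the asserted identity with $q$ ranging over $\{1\}$.

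For the inductive step I would first settle the vanishing criterion. If $\uu\cdot\vv'=0$, the induction hypothesis produces $t\leq n$ and $l\leq m-1$ with $i_t=j_l$ and $a_t=b_l\in\XX_1$, which also witnesses $\uu\cdot\vv=0$. If instead $\uu\cdot\vv'=\pm\ww'$ with $\ww'\in\DX$ but $\ww'\cdot\oD^{j_m}(b_m)=0$, then by the single-letter rule $b_m\in\XX_1$ and $\oD^{j_m}(b_m)$ is one of the letters of $\ww'$; by the induction hypothesis $\ww'$ is a rearrangement of the letters of $\uu$ together with those of $\vv'$, and since $\vv\in\DX$ forbids $\oD^{j_q}(b_q)=\oD^{j_m}(b_m)$ for $q<m$ when $b_m\in\XX_1$, this letter must be some $\oD^{i_t}(a_t)$, giving the pair $(t,m)$. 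Conversely, if a pair $(t,l)$ exists I would look at the partial product $\uu\cdot\oD^{j_1}(b_1)\cdots\oD^{j_{l-1}}(b_{l-1})$: if it already vanishes we are done, and otherwise (again by the induction hypothesis) it is a rearrangement of the letters of $\uu$ and of $\oD^{j_1}(b_1)\wdots\oD^{j_{l-1}}(b_{l-1})$, so it still contains the letter $\oD^{i_t}(a_t)=\oD^{j_l}(b_l)$ of $\uu$; multiplying by $\oD^{j_l}(b_l)$ with $b_l\in\XX_1$ then gives $0$ by the single-letter rule, whence $\uu\cdot\vv=0$.

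Now suppose $\uu\cdot\vv\neq0$; then $\uu\cdot\vv'\neq0$, so by the induction hypothesis $\uu\cdot\vv'=(-1)^{\varepsilon'}\ww'$ with $\ww'\in\DX$ a rearrangement of the letters of $\uu$ and $\vv'$ and $\varepsilon'=\sum\D{a_p}\D{b_q}$ over the pairs $(p,q)$ with $q\leq m-1$ and $\oD^{i_p}(a_p)>\oD^{j_q}(b_q)$. Applying the $m=1$ case to $\ww'\cdot\oD^{j_m}(b_m)$ gives $\uu\cdot\vv=(-1)^{\varepsilon'+\eta}\ww$, where $\ww\in\DX$ is $\ww'$ with $\oD^{j_m}(b_m)$ inserted in ordered position — it does satisfy the repetition constraint, since otherwise the product would vanish — and $\eta$ is the sum of $\D{\ell}\D{b_m}$ over the letters $\ell$ of $\ww'$ with $\ell>\oD^{j_m}(b_m)$. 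The crux is that the letters of $\ww'$ coming from $\vv'$ are $\oD^{j_1}(b_1)\wdots\oD^{j_{m-1}}(b_{m-1})$, all $\leq\oD^{j_m}(b_m)$ because $\vv$ is $\leq$-ordered, so only the $\oD^{i_p}(a_p)$ can strictly exceed $\oD^{j_m}(b_m)$; hence $\eta=\sum\D{a_p}\D{b_m}$ over $p$ with $\oD^{i_p}(a_p)>\oD^{j_m}(b_m)$, and $\varepsilon'+\eta$ is exactly the full double sum over all pairs $(p,q)$ with $\oD^{i_p}(a_p)>\oD^{j_q}(b_q)$. As $\ww$ is a reordering of all $n+m$ letters lying in $\DX$, this is the claimed formula.

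The step I expect to be the main obstacle is this last sign accounting: one must use the rearrangement supplied by the induction hypothesis to classify the letters of $\ww'$ as coming from $\uu$ or from $\vv'$, and then observe that the $\leq$-ordering of $\vv$ is precisely what prevents the $\vv'$-letters from contributing to $\eta$, so that the partial Koszul signs telescope exactly into the announced double sum. The vanishing analysis, by comparison, should be a routine case split resting on the repetition constraint built into the definition of $\DX$.
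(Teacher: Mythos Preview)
Your proposal is correct and follows essentially the same inductive strategy as the paper: both argue by induction on $m$ and use the $\leq$-ordering of $\vv$ to ensure that the letters coming from $\vv$ contribute no extra inversions, so the Koszul signs telescope into the stated double sum. The only cosmetic difference is that you peel off the \emph{last} factor $\oD^{j_m}(b_m)$ (which requires the easy observation that the recursive definition unwinds to a left-normed product), whereas the paper peels off the \emph{first} factor $\oD^{j_1}(b_1)$ directly from the definition; your treatment of the vanishing criterion is also more explicit than the paper's, which simply declares that part ``clear''.
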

\begin{proof}
The second claim is clear, so we just prove the first one. Use induction on~$\mm$. For~$\mm=1$, the claim follows by the definition of the operation~$\cdot$. For~$\mm>1$, since the inequality~$
\oD^{\jj_1}(\bb_1)\leq\oD^{\jj_2}(\bb_2)\leq \pdots \leq \oD^{\jj_m}(\bb_m)$ holds, we obtain
\begin{multline*}
 \uu\cdot\vv=(\uu\cdot \oD^{\jj_1}(\bb_1))\cdot\oD^{\jj_2}(\bb_2)...\oD^{\jj_m}(\bb_m)\\
 =(-1)^{\sum_{\pp}(\D{\aa_{\pp}}\D{\bb_1})}
\oD^{\ii_1}(\aa_1)...\oD^{\ii_{t-1}}(\aa_{t-1})\oD^{\jj_1}(\bb_1)
\oD^{\ii_{t}}(\aa_{t})... \oD^{\ii_n}(\aa_n) \cdot \oD^{\jj_2}(\bb_2)...\oD^{\jj_m}(\bb_m)\\
=(-1)^{\sum_{(\pp,\qq)}(\D{\aa_\pp}\D{\bb_\qq})}\oD^{l_1}(\dd_1)...\oD^{l_{n+m}}
(\dd_{n+m})
\end{multline*}
with the desired properties.
\end{proof}
Now we are in a position to show that, endowed with the defined operations~$\cdot$ and~$\oD$, the vector space~$\kk\DX$ becomes a free \adsa.
\begin{lemm}\label{free-dsa}
The algebra~$(\kk\DX,\cdot,\oD)$ is isomorphic to the free \adsa~$\fdsa$ generated by~$\XX$. In particular, if we define a linear operation~$\circ$ on~$(\kk\DX,\cdot,\oD)$ by the rule: $\uu\circ\vv=\uu\cdot \oD(\vv)$ for all~$\uu$ and~$\vv$ in~$\DX$, then~$(\kk\DX,\circ)$ becomes a GDN superalgebra.
\end{lemm}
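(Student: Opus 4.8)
The plan is to verify directly that the explicitly constructed algebra $(\kk\DX,\cdot,\oD)$ is an \adsa\ and that it satisfies the universal mapping property defining $\fdsa$; the ``in particular'' assertion then follows at once from Lemma~\ref{become-gdn} applied to this algebra.

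First I would record that the operation $\cdot$ is well defined --- for a fixed $\uu$ and a fixed letter $\oD^{\jj}(\bb)$ there is a unique position at which to insert $\oD^{\jj}(\bb)$ among the $\oD$-powers of $\uu$ so as to remain in $\DX$, ties among equal even letters causing no ambiguity --- and then read off associativity and supercommutativity from Lemma~\ref{product-fomula}. By that lemma, for $\uu,\vv$ as in~\eqref{xyform} with $\uu\cdot\vv\neq0$, the product equals, up to a sign recording the parity-weighted number of ``out of order'' pairs between the two strings of $\oD$-powers (pairs $(\pp,\qq)$ with $\oD^{\ii_\pp}(\aa_\pp)>\oD^{\jj_\qq}(\bb_\qq)$, weighted by $\D{\aa_\pp}\D{\bb_\qq}$), the unique element of $\DX$ whose multiset of $\oD$-powers is the union of those of $\uu$ and $\vv$; moreover a product vanishes exactly when that multiset has a repeated $\oD^{\ii}(\aa)$ with $\aa\in\XX_1$. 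Since neither the union of three such multisets nor the total parity-weighted number of out-of-order pairs among three strings depends on the bracketing, and the vanishing condition is likewise symmetric, $(\uu\cdot\vv)\cdot\ww$ and $\uu\cdot(\vv\cdot\ww)$ coincide, which is associativity; and comparing $\uu\cdot\vv$ with $\vv\cdot\uu$ the two out-of-order counts sum to $\sum_{\pp,\qq}\D{\aa_\pp}\D{\bb_\qq}\equiv\D\uu\,\D\vv$ modulo~$2$ (coinciding letters contribute $0$), which is supercommutativity. That $\oD$ is a derivation of parity $0$ is seen as follows: each summand of $\oD(\uu)$ is $\uu$ with a single $\oD^{\ii_t}(\aa_t)$ replaced by $\oD^{\ii_t+1}(\aa_t)$, so $\oD$ preserves parities; and for the Leibniz identity, by bilinearity and the inductive definition $\uu\cdot\vv=(\uu\cdot\oD^{\jj_1}(\bb_1))\cdot\oD^{\jj_2}(\bb_2)\cdots\oD^{\jj_m}(\bb_m)$ it suffices to take $\vv=\oD^{\jj}(\bb)$ a single generator, whereupon expanding $\oD(\uu\cdot\oD^{\jj}(\bb))$, $\oD(\uu)\cdot\oD^{\jj}(\bb)$ and $\uu\cdot\oD^{\jj+1}(\bb)$ by the definitions of $\oD$ and $\cdot$ and using the multiset-and-sign description of Lemma~\ref{product-fomula}, the summands on the two sides match term by term (in the degenerate case $\uu\cdot\oD^{\jj}(\bb)=0$, two summands on the right cancel), once the signs are tracked.

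For the universal property, let $\psi\colon\XX\to\mB$ be a map into an \adsa\ $\mB=\mB_0\oplus\mB_1$ with $\psi(\XX_\ii)\subseteq\mB_\ii$, let $\oD_\mB$ denote the derivation of $\mB$, and define $\varphi\colon\kk\DX\to\mB$ on the basis $\DX$ by
\[
\varphi\bigl(\oD^{\ii_1}(\aa_1)\cdots\oD^{\ii_n}(\aa_n)\bigr)
=\oD_\mB^{\ii_1}(\psi(\aa_1))\cdots\oD_\mB^{\ii_n}(\psi(\aa_n)),
\]
the right-hand product being taken in $\mB$, and extend $\varphi$ linearly; it clearly respects the $\mZ_2$-grading and extends $\psi$, since $\oD_\mB^{0}$ is the identity. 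Multiplicativity of $\varphi$ is Lemma~\ref{product-fomula} read inside $\mB$: the sign incurred by sorting the concatenated string to an element of $\DX$ is exactly the sign that supercommutativity of $\mB$ accumulates while reordering the corresponding product, and a repeated factor $\oD_\mB^{\ii}(\psi(\aa))$ with $\aa\in\XX_1$ makes the product vanish because an odd element of a supercommutative algebra squares to $0$ (here $\chart(\kk)\neq2$ is used). The identity $\varphi\circ\oD=\oD_\mB\circ\varphi$ is checked on a basis element $\uu$ by expanding $\oD_\mB(\varphi(\uu))$ through the Leibniz rule of $\mB$ and identifying the result with $\varphi(\oD(\uu))$. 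For uniqueness, any homomorphism extending $\psi$ must send the length-one word $\oD^{\nn}(\aa)\in\DX$, which in $\kk\DX$ is the $\nn$-fold $\oD$-image of $\aa$, to $\oD_\mB^{\nn}(\psi(\aa))$, and is then determined on all of $\DX$ by multiplicativity. This establishes $(\kk\DX,\cdot,\oD)\cong\fdsa$; and since this algebra is an \adsa, Lemma~\ref{become-gdn} shows that $(\kk\DX,\circ)$ with $\uu\circ\vv=\uu\cdot\oD(\vv)$ is a GDN superalgebra.

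The step I expect to be the main obstacle is the sign bookkeeping: principally, verifying the Leibniz rule for $\oD$ (including the correct-sign cancellation occurring when $\uu\cdot\oD^{\jj}(\bb)=0$), and matching the inversion-count signs of Lemma~\ref{product-fomula} with the supercommutativity signs of $\mB$ when proving $\varphi$ multiplicative.
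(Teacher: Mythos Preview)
Your verification that $(\kk\DX,\cdot,\oD)$ is an \adsa\ tracks the paper's argument closely: associativity and supercommutativity from Lemma~\ref{product-fomula}, and the Leibniz rule by induction on the length of $\vv$, reducing to $\vv=\oD^{\jj}(\bb)$. You are also right to flag the degenerate case $\uu\cdot\oD^{\jj}(\bb)=0$ in the Leibniz check; the paper's displayed computation tacitly assumes nonvanishing, whereas you note that the two surviving summands on the right cancel. The one place where you diverge is the freeness argument. The paper takes the abstract free \adsa\ $\fdsa$ as given, observes that left-normed products indexed by elements of $\DX$ span $\fdsa$, and then uses the universal property of $\fdsa$ to produce a homomorphism $\tl\psi\colon\fdsa\to\kk\DX$ which it shows is bijective. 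You instead verify the universal property of $\kk\DX$ directly, building the comparison map $\varphi\colon\kk\DX\to\mB$ by hand on the basis $\DX$ and checking multiplicativity and $\oD$-compatibility via Lemma~\ref{product-fomula} and the Leibniz rule in $\mB$. Both routes are standard and correct; yours is self-contained and does not presuppose the existence of $\fdsa$, while the paper's is a little shorter because the map it constructs lands in the explicit algebra $\kk\DX$ where linear independence of $\DX$ finishes the argument immediately.
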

\begin{proof}We first show that~$(\kk\DX,\cdot,\oD)$ is an \adsa.
By Lemma~\ref{product-fomula}, the associativity is straightforward.
As for the supercommutativity, let~$\uu$ and~$\vv$ be as in Equation~\eqref{xyform}. For~$\uu\cdot\vv=0$, it is clear that~$\vv\cdot\uu=0=(-1)^{\D\uu\D\vv}\uu\cdot\vv$.
For~$\uu\cdot\vv\neq 0$,
with the same notation of Lemma~\ref{product-fomula}, we have
$$
\uu\cdot\vv=(-1)^{\sum_{(\pp,\qq)}(\D{\aa_\pp}\D{\bb_\qq})}\oD^{l_1}(\dd_1)...\oD^{l_{n+m}}(\dd_{n+m})
,
$$
where the sum is over all the pairs~$(\pp,q)$ such that~$\oD^{\ii_p}(\aa_p)>\oD^{\jj}(\bb)$.
Similarly, we get
$$
\vv\cdot\uu=(-1)^{\sum_{(\pp',\qq)}(\D{\aa_{\pp'}}\D{\bb_\qq})}\oD^{l_1}(\dd_1)...\oD^{l_{n+m}}
(\dd_{n+m}),
$$
where the sum is over all the pairs~$(\pp',q)$ such that~$\oD^{\ii_{p'}}(\aa_{p'})<\oD^{\jj}(\bb)$.
Combining the above two formulas, we get
$$\uu\cdot \vv=(-1)^{\sum_{(\pp,\qq)}(\D{\aa_\pp}\D{\bb_\qq})}\vv\cdot \uu,$$
where the sum is over all the pairs~$(\pp,q)$ such that~$\oD^{\ii_p}(\aa_p)\neq\oD^{\jj}(\bb)$.
Moreover, if~$\ii_p=\jj_q$ and~$\aa_p=\bb_q$ for some integers~$\pp,\qq$ such that~$1\leq p\leq n$ and~$1\leq q\leq m$, then~$\aa_\pp$ lies in~$\XX_0$ and thus~$(-1)^{\D{\aa_p}\D{\bb_q}}=1$. So we obtain
$\vv\cdot\uu=(-1)^{\D\uu\D\vv}\uu\cdot\vv$.

To show that~$\oD(\uu\cdot\vv)=\oD(\uu)\cdot \vv+\uu\cdot\oD(\vv)$, we use induction on~$\mm$. For~$\mm=1$, we have
\begin{multline*}
  \oD(\uu\cdot\oD^{\jj_1}(\bb_1))
  =(-1)^{\sum_\pp(\D{\aa_\pp}\D{\bb_1})}
  \oD(\oD^{\ii_1}(\aa_1)...\oD^{\ii_{t-1}}(\aa_{t-1})\oD^{\jj}(\bb)
\oD^{\ii_{t}}(\aa_{t})... \oD^{\ii_n}(\aa_n))\\
=(-1)^{\sum_\pp(\D{\aa_\pp}\D{\bb_1})}
((\oD^{\ii_1}(\aa_1)...\oD^{\ii_{\tt-1}}(\aa_{\tt-1}) \cdot \oD^{\jj_1+1}(\bb_{1})) \cdot \oD^{\ii_{\tt}}(\aa_{\tt})...\oD^{\ii_{n}}(\aa_{n})\\
+\sum_{1\leq \qq\leq \tt-1}(\oD^{\ii_1}(\aa_1)...\oD^{\ii_{q-1}}(\aa_{q-1}) \cdot \oD^{\ii_{q}+1}(\aa_{q})) \cdot \oD^{\ii_{q+1}}(\aa_{q+1})...\oD^{\jj_1}(\bb_1)...\oD^{\ii_{n}}(\aa_{n})\\
+
\sum_{\tt\leq \qq\leq \nn}(\oD^{\ii_1}(\aa_1)...\oD^{\jj_1}(\bb_1)...\oD^{\ii_{p-1}}(\aa_{p-1}) \cdot \oD^{\ii_{q}+1}(\aa_{q})) \cdot \oD^{\ii_{q+1}}(\aa_{q+1})...\oD^{\ii_{n}}(\aa_{n})
)\\
=  \oD(\uu) \cdot \oD^{\jj_1}(\bb_{1})
+ \uu \cdot \oD^{\jj_1+1}(\bb_{1})  \mbox{ (by applying associativity and supercommutativity)}.
\end{multline*}
 For~$\mm>1$, we obtain
 \begin{multline*}
   \oD(\uu\cdot \vv)=\oD((\uu\cdot\oD^{\jj_1}(\bb_1))
   \cdot\oD^{\jj_2}(\bb_2)...\oD^{\jj_m}(\bb_m))\\
   =\oD(\uu\cdot\oD^{\jj_1}(\bb_1))
   \cdot\oD^{\jj_2}(\bb_2)...\oD^{\jj_m}(\bb_m)+  (\uu\cdot\oD^{\jj_1}(\bb_1))
   \cdot\oD(\oD^{\jj_2}(\bb_2)...\oD^{\jj_m}(\bb_m))\\
   =(\oD(\uu)\cdot\oD^{\jj_1}(\bb_1))
   \cdot\oD^{\jj_2}(\bb_2)...\oD^{\jj_m}(\bb_m)
   + (\uu\cdot \oD^{\jj_1+1}(\bb_1))
   \cdot \oD^{\jj_2}(\bb_2)...\oD^{\jj_m}(\bb_m)\\ +  (\uu\cdot\oD^{\jj_1}(\bb_1))
   \cdot\oD(\oD^{\jj_2}(\bb_2)...\oD^{\jj_m}(\bb_m))
   =\oD(\uu)\cdot \vv+\uu\cdot\oD(\vv).
 \end{multline*}
 Therefore, $(\kk\DX,\cdot,\oD)$ is an \adsa.

It remains to show that~$(\kk\DX,\cdot,\oD)$ is free on~$\XX$. By applying associativity and supercommutativity in~$\fdsa$,
it is easy to see that the set of all the monomials of the form:
$$(((...(\oD^{\ii_1}(\aa_1) \cdot \oD^{\ii_2}(\aa_2))\cdot\pdots)
\cdot\oD^{\ii_{n-1}}(\aa_{n-1}))
\cdot\oD^{\ii_{n}}(\aa_{n}) ) \mbox{ (left-normed bracketting)}$$ such that~$\oD^{\ii_1}(\aa_1)...\oD^{\ii_n}(\aa_n)$ lies in~$\DX$ forms a linear generating set of~$\fdsa$.
Define a map~$\psi$: $\XX \rightarrow \kk\DX$ by~$\psi(\aa)=\aa$ for every~$\aa$ in~$\XX$, and extend~$\psi$ to a superalgebra homomorphism~$\tl{\psi}$: $ \fdsa \rightarrow \kk\DX$. Then
$$\tl{\psi}((((...(\oD^{\ii_1}(\aa_1) \cdot \oD^{\ii_2}(\aa_2))\cdot\pdots)
\cdot\oD^{\ii_{n-1}}(\aa_{n-1}))
\cdot\oD^{\ii_{n}}(\aa_{n})))
=\oD^{\ii_1}(\aa_1)...\oD^{\ii_n}(\aa_n).$$ Since the set~$\DX$ is linearly independent in~$\kk\DX$, the homomorphism~$\tl\psi$ is an isomorphism.
\end{proof}
Thanks to Lemma~\ref{free-dsa}, we can identify~$\fdsa$ with~$\kk\DX$.

\subsection{The linear independence of the set~$\tab$}
 Our aim in this subsection is to show that the set of all the GDN supertableaux~$\tab$ over~$\XX$ is linearly independent. Our strategy is to construct an GDN superalgebra homomorphism
 from~$(\fns\XX,\circ)$ to~$(\kk\DX, \circ)$, and show that the image of~$\tab$ is linearly independent in~$\kk\DX$, where the operation~$\circ$ is defined in Lemma~\ref{become-gdn}.

We define an ordering~$<$ on~$\DX$ as follows:
For all~$\uu$ and~$\vv$ be as in Equation~\eqref{xyform}, we define
\begin{gather}\label{ordering}\tag{$\ast$}
\parbox{132mm}{$\uu<\vv \Leftrightarrow (n,\ii_n, \aa_n\wdots \ii_1,\aa_1)<(m,\jj_m, \bb_m, \wdots
\jj_1,\bb_1) \mbox{ lexicographically}$,}
\end{gather}
and define the \emph{length}~$\ell(\uu)$ of~$\uu$ to be~$\nn$.

For every element~$\ff=\sum_{1\leq i\leq n}\alpha_i\uu_i$ with each~$\alpha_i\neq 0$ in~$\kk$ and~$\uu_1>\uu_2>...>\uu_n$ in~$\DX$, we call~$\ffb:=\uu_1$ the \emph{leading monomial} of~$\ff$, and call~$\lcoe\ff:=\alpha_1$ the \emph{leading coefficient} of~$\ff$.

Now we are ready to show that the set~$\tab$ is linearly independent in~$\fns\XX$.  Recall that by Lemma~\ref{become-gdn}, $(\kk\DX, \circ)$ is a GDN superalgebra.
  \begin{theorem}\label{sup-basis}
  Let~$\varphi$: $(\fns\XX, \circ)\rightarrow (\kk\DX,\circ)$ be a GDN superalgebra homomorphism induced by~$\varphi(\aa)=\aa$ for every element~$\aa$ in~$\XX$. Then~$\varphi$ is injective. Moreover, the set~$\tab$ of all the GDN supertableaux over $\XX$ forms a linear basis of the free GDN superalgebra~$\fns\XX$.
\end{theorem}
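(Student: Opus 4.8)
The plan is to exploit the map $\varphi\colon (\fns\XX,\circ)\to(\kk\DX,\circ)$ which exists and is well-defined because, by Lemma~\ref{become-gdn}, $(\kk\DX,\circ)$ is a GDN superalgebra and $\fns\XX$ is free. Since Lemma~\ref{generating set} already tells us that $\tab$ spans $\fns\XX$, it suffices to show that the image $\varphi(\tab)\subseteq\kk\DX$ is linearly independent; injectivity of $\varphi$ on $\fns\XX$ then follows formally, because any linear dependence among GDN supertableaux would map to a dependence among their images, and conversely a nonzero element of $\ker\varphi$ would, after rewriting via Lemma~\ref{generating set}, produce a dependence among the $\varphi(\lambda_i)$.

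The heart of the argument is a careful computation of $\varphi(\lambda)$ for a GDN supertableau $\lambda=\lnormed{\aa,\mu_1}{\mu_n}$ with $\mu_i=\rnormed{\aa_{i,r_i}}{\aa_{i,1}}$, followed by an analysis of its leading monomial with respect to the ordering~\eqref{ordering}. First I would record the effect of $\varphi$ on a single simple term: unwinding $\uu\circ\vv=\uu\cdot\oD(\vv)$ and using the explicit derivation formula on $\kk\DX$, one gets $\varphi(\rnormed{\aa_r}{\aa_1})=\oD^{r-1}(\aa_r)\cdot\aa_{r-1}\cdots\aa_1+(\text{lower terms})$, where ``lower'' means monomials whose sorted sequence of $(\ii,\aa)$-pairs is strictly smaller because one differentiation has been ``moved'' to an earlier letter; here the nonredundancy conditions Definition~\ref{def-sup-tab}\ITEM4--\ITEM5 guarantee the leading monomial does not collapse to $0$ in $\kk\DX$. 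Then, iterating $\lambda=\lnormed{\aa,\mu_1}{\mu_n}$, each further right-multiplication by $\oD(\varphi(\mu_i))$ appends the block $\oD^{r_i-1+?}(\aa_{i,r_i})\,\aa_{i,r_i-1}\cdots\aa_{i,1}$ and, after sorting into $\DX$, the claim will be that
\[
\lm{\varphi(\lambda)}=\aa\cdot\prod_{i=1}^{n}\bigl(\oD^{r_i-1}(\aa_{i,r_i})\cdot\aa_{i,r_i-1}\cdots\aa_{i,1}\bigr)^{\!\text{sorted}},
\]
and that distinct GDN supertableaux yield distinct leading monomials. This last injectivity-on-leading-monomials step is where Conditions~\ITEM1--\ITEM3 of Definition~\ref{def-sup-tab} are used: they are exactly designed so that one can recover $(a;r_1,\dots,r_n;a_{i,j})$ from the multiset of $\oD$-powers and letters appearing, reading off the $r_i$ from the exponents of the differentiated letters and then reconstructing each row by the weak-decreasing constraints.

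The main obstacle I expect is bookkeeping the lower-order correction terms: applying $\oD$ to a product in $\kk\DX$ produces a sum over all positions, so $\varphi(\lambda)$ is a large linear combination, and one must verify both (a) that none of the would-be leading monomials are killed by the $\XX_1$-repetition rule defining $\DX$ (this is precisely where Definition~\ref{def-sup-tab}\ITEM4--\ITEM5 and $\chart(\kk)\neq2$ enter), and (b) that all the genuinely-lower terms coming from different supertableaux cannot conspire to cancel a surviving leading monomial of another. A clean way to handle (b) is to introduce a second grading — for instance the ``depth'' or root number is not enough, but the total number of differentiations $\sum(r_i-1)$ together with the ordering~\eqref{ordering} — and argue that within each fixed total-length and within the span of supertableaux whose images share a top monomial, a triangularity argument (ordering supertableaux by their leading monomials) forces all coefficients to vanish. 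I would organize this as: (i) compute $\lm{\varphi(\mu)}$ for simple $\mu$; (ii) show $\varphi$ restricted to simple terms of fixed length, modulo lower terms, is given by the stated monomial; (iii) extend multiplicatively to get $\lm{\varphi(\lambda)}$ for all $\lambda\in\tab$; (iv) check the map $\lambda\mapsto\lm{\varphi(\lambda)}$ is injective on $\tab$; (v) conclude by the standard leading-term / triangularity argument that $\varphi(\tab)$ is linearly independent, hence $\tab$ is a basis and $\varphi$ is injective.
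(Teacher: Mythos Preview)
Your overall strategy---compute the leading monomial of $\varphi(\lambda)$ with respect to the ordering~\eqref{ordering}, check that distinct supertableaux give distinct leading monomials, and conclude by triangularity---is exactly the paper's approach. However, your computation of the leading monomial is wrong in a way that matters for step~(iv).

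For a simple term $\rnormed{\aa_r}{\aa_1}$ one has $\varphi(\aa_r\circ\nu)=\aa_r\cdot\oD(\varphi(\nu))$, so the outermost letter $\aa_r$ receives \emph{no} derivative; all the differentiation piles onto the innermost letter. Concretely $\lm{\varphi(\rnormed{\aa_r}{\aa_1})}=\aa_r\aa_{r-1}\cdots\aa_2\,\oD^{r-1}(\aa_1)$, not $\oD^{r-1}(\aa_r)\cdot\aa_{r-1}\cdots\aa_1$. After the extra $\oD$ coming from $\varphi(\lambda)=\aa\cdot\oD(\varphi(\mu_1))\cdots\oD(\varphi(\mu_n))$, the paper's leading monomial is
\[
\lm{\varphi(\lambda)}=\aa_{n,2}\cdots\aa_{n,r_n}\cdots\aa_{1,2}\cdots\aa_{1,r_1}\,\aa\;\oD^{r_n}(\aa_{n,1})\cdots\oD^{r_1}(\aa_{1,1}),
\]
with leading coefficient $\pm1$. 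This matters because the conditions in Definition~\ref{def-sup-tab} are tailored to \emph{this} monomial: from it one reads off the multiset $\{(r_i,\aa_{i,1})\}$ directly from the differentiated letters, and conditions~\ITEM1--\ITEM2 fix their ordering; then condition~\ITEM3 is precisely the statement that the undifferentiated block $\aa_{n,2}\cdots\aa$ is already sorted, so the remaining data $(\aa;\aa_{i,j},\,j\geq2)$ is recovered uniquely; conditions~\ITEM4--\ITEM5 guarantee no odd-letter collision in $\DX$, so $\lm{\varphi(\lambda)}\neq0$. With your inverted formula the link to conditions~\ITEM2--\ITEM3 breaks and the injectivity argument would not go through as you sketch it.

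Finally, your concern~(b) about lower terms ``conspiring'' is unnecessary. Once the map $\lambda\mapsto\lm{\varphi(\lambda)}$ is injective on $\tab$, a putative relation $\sum_i\alpha_i\varphi(\lambda_i)=0$ is killed immediately by looking at the largest $\lm{\varphi(\lambda_i)}$: only one summand contributes to it, forcing that $\alpha_i=0$. No second grading is needed; this is the paper's one-line conclusion.
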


\begin{proof}
We first show that~$\varphi$ is injective.
Let~$\mu$ be a GDN supertableau as in Equation~\eqref{form-sup-tab}.
Then it is easy to see that~
$$\varphi(\mu)=\varphi(\aa)\cdot \oD(\varphi(\mu_1))\cdot \pdots \cdot \oD(\varphi(\mu_n)).$$
Therefore, it is straightforward to show that
$$
\overline{\varphi(\mu)}=a_{n,2}...a_{n,r_{n}}a_{n-1,2}...a_{n-1,r_{n-1}}...a_{1,2}...\aa_{1,r_{1}}\aa \oD^{r_n}(\aa_{n,1})...\oD^{r_1}(\aa_{1,1}),
$$
and~$\lcoe{\varphi(\mu)}=1$ or~$\lcoe{\varphi(\mu)}=-1$.

Therefore, for all  GDN supertableaux $\mu$ and~$\nu$, if~$\mu\neq \nu$, then we obtain~$\ov{\varphi(\mu)}\neq \ov{\varphi(\nu)}$.
 Suppose that for some pairwise different GDN supertableaux~$\mu_1\wdots\mu_n$ in~$\tab$, for some nonzero elements~$\alpha_1\wdots\alpha_n$ in~$\kk$, we have  $\sum\alpha_i\mu_i=0$.
 Then the equality~$\sum\varphi(\alpha_i\mu_i)=0$ contradicts to the fact that $\ov{\varphi(\mu_i)}$  are pairwise different. Therefore, the set~$\tab$ is linearly independent and the homorphism~$\varphi$ is injective.
 In particular, by Lemma~\ref{generating set}, the set~$\tab$ is a linear basis of~$\fns\XX$.
\end{proof}

\subsection{A Poincar\'{e}-Birkhoff-Witt type Theorem}
We call an \adsa~$\mB=\mB_0\oplus \mB_1$ a universal enveloping algebra of a GDN superalgebra~$\mA=\mA_0\oplus \mA_1$ if, there is a linear map~$\psi$: $\mA\rightarrow \mB$ satisfying $\varphi(\mA_i)\subseteq\mB_i$ ($i=0,1$) and
\begin{equation}\label{universal-def}
  \psi(\xx\circ\yy)=\psi(\xx)\cdot \oD(\psi(\yy))
\end{equation}
for all~$\xx$ and~$\yy$ in~$\mA$, and the following holds: for an arbitrary \adsa~$\mathcal{C}=\mathcal{C}_0\oplus\mathcal{C}_1$,
for every linear map~$\psi'$: $\mA\rightarrow \mathcal{C}$ satisfying the equation~$\psi'(\xx\circ\yy)=\psi'(\xx)\cdot \oD(\psi'(\yy))$ for all~$\xx$ and~$\yy$ in~$\mA$,   and~$\psi'(\mA_i)\subseteq\mathcal{C}_i$ ($i=1,2$), there exists a unique homomorphism of \adsa s~${\varphi:\ \mB\rightarrow\mathcal{C}}$ such that~$\varphi \circ \psi=\psi'$.
It is easy to see that whenever such an universal enveloping algebra~$\mB$ exists, then it is unique up to isomorphism.

Let $\mA=\mA_0\oplus \mA_1$  be a  superalgebra and let~$\SS$ be a subset of~$\mA$. We call~$S$ a \textit{homogeneous} set if~$\SS$ is a subset of~$\mA_0\cup \mA_1$.
 For every homogenous subset~$\SS$ of~$\fns\XX$, the notation~$\ns\XX\SS$ means the quotient superalgebra~$\fns\XX/\Id\SS$, where~$\Id\SS$ means the ideal of~$\fns\XX$ generated by~$\SS$.
  Let~$\varphi$ be as that in Theorem~\ref{sup-basis}, and denote by~$\IdD{\varphi{(\SS)}}$ the \adsa\ ideal of~$(\kk\DX, \cdot,\oD)$ generated by~$\varphi(\SS)$.
 By convention, the notation~$\kds{\varphi{(\SS)}}$ means  the \adsa\ generated by~$X$ with the set~$\varphi{(\SS)}$ of defining relations, that is, the quotient superalgebra~$\kk\DX/\IdD{\varphi{(\SS)}}$. Then it is easy to see that, for every GDN superalgebra~$\ns\XX\SS$, the \adsa~$\kds{\varphi(\SS)}$ is the universal enveloping algebra of~$\ns\XX\SS$.

Our aim in this subsection is to show that every GDN superalgebra can be embedded into its universal enveloping \adsa. We first consider the subalgebra of~$(\kk\DX,\circ)$ (as GDN superalgebra) generated by~$\XX$.

For every monomial~$\uu=\oD^{\ii_1}(\aa_1)...\oD^{\ii_n}(\aa_n)$ in~$\DX$, define the \emph{weight}~$\wt(\uu)$ of~$\uu$ to be~${(\sum_{1\leq j\leq n}\ii_\jj)-n
+1}$. Then it is easy to see that~$\wt(\uu\cdot \vv)=\wt(\uu)+\wt(\vv)-1$ for all~$\uu$ and~$\vv$ in~$\DX$ such that~$\uu\cdot \vv\neq 0$.

The following lemma offers another linear basis of the free GDN superalgebra generated by~$\XX$, that is, the set~$\DDX$ of all the monomials of weight~$0$ in~$\DX$.
 \begin{lemm}\label{isomorphism}
 Let~$\kk\DDX$ be the subspace of~$\kk\DX$ spanned by all the monomials of weight~$0$ in~$\DX$. Then~$(\kk\DDX,\circ)$ is the subalgebra of~$(\kk\DX,\circ)$ generated by~$\XX$.  Moreover, let~$\varphi  :\fns\XX\longrightarrow \kk\DDX$ be the GDN superalgebra homomorphism induced by $\varphi(a)=a$ for every~$\aa$ in~$\XX$.  Then~$\varphi$ is an isomorphism.
 \end{lemm}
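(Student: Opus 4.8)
The plan is to derive the lemma from Theorem~\ref{sup-basis} together with a weight count. Since $\varphi$ is injective by Theorem~\ref{sup-basis}, it suffices to prove that $\varphi(\fns\XX)=\kk\DDX$: this gives at once that $\kk\DDX$ is the subalgebra of $(\kk\DX,\circ)$ generated by $\XX$ and that $\varphi$ is an isomorphism. The inclusion $\varphi(\fns\XX)\subseteq\kk\DDX$ is a weight count: every monomial occurring in $\oD(\vv)$ comes from $\vv$ by raising one exponent by $1$, hence has weight $\wt(\vv)+1$, and since $\wt(\uu\cdot\ww)=\wt(\uu)+\wt(\ww)-1$ it follows that every monomial occurring in $\uu\circ\vv=\uu\cdot\oD(\vv)$ has weight $\wt(\uu)+\wt(\vv)$; as $\wt(\aa)=0$ for $\aa\in\XX$, the subspace $\kk\DDX$ is closed under $\circ$ and contains $\XX$, hence contains the subalgebra generated by $\XX$, which is exactly $\varphi(\fns\XX)$.

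The core of the argument is to show that $\mu\mapsto\lm{\varphi(\mu)}$ is a bijection from $\tab$ onto the set of weight-$0$ monomials of $\DX$; granting this, the reverse inclusion follows by triangularity. That this map is injective, that $\lcoe{\varphi(\mu)}=\pm1$, and the formula
$$\lm{\varphi(\mu)}=\aa_{n,2}\dots\aa_{n,\rr_n}\,\aa_{n-1,2}\dots\aa_{n-1,\rr_{n-1}}\dots\aa_{1,2}\dots\aa_{1,\rr_1}\,\aa\;\oD^{\rr_n}(\aa_{n,1})\dots\oD^{\rr_1}(\aa_{1,1})$$
for $\mu=\lnormed{\aa,\mu_1}{\mu_n}$ of the form~\eqref{form-sup-tab} are contained in the proof of Theorem~\ref{sup-basis}; note in particular that this monomial has weight $0$ and that its exponent-$0$ letters precede all its exponent-$\geq1$ letters. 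For surjectivity I would invert the formula. Given $\ww=\oD^{\jj_1}(\bb_1)\dots\oD^{\jj_m}(\bb_m)\in\DX$ of weight $0$, the ordering on $Y$ forces its exponent-$0$ entries to come first, say $\ww=\bb_1\dots\bb_p\,\oD^{\jj_{p+1}}(\bb_{p+1})\dots\oD^{\jj_m}(\bb_m)$ with $\jj_{p+1},\dots,\jj_m\geq1$, and the weight-$0$ condition reads $\sum_{l>p}\jj_l=m-1$, so $p\geq1$. Put $n:=m-p$ and, for $1\leq i\leq n$, $\rr_i:=\jj_{m-i+1}$, $\aa_{i,1}:=\bb_{m-i+1}$; then $\rr_1\geq\dots\geq\rr_n\geq1$ and $\sum_i(\rr_i-1)+1=p$, so filling the positions $\aa_{n,2},\dots,\aa_{n,\rr_n},\dots,\aa_{1,2},\dots,\aa_{1,\rr_1},\aa$ with $\bb_1,\dots,\bb_p$ in this order gives a term $\mu=\lnormed{\aa,\mu_1}{\mu_n}$, $\mu_i=\rnormed{\aa_{i,\rr_i}}{\aa_{i,1}}$, with $\lm{\varphi(\mu)}=\ww$. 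One then checks $\mu\in\tab$: the increasing order of the entries of $\ww$ yields Definition~\ref{def-sup-tab}~\ITEM1--\ITEM3, and the clause of the definition of $\DX$ requiring two occurrences of the same element of $\XX_1$ to carry distinct exponents yields precisely \ITEM4 and \ITEM5. I expect this clause-by-clause matching of the constraints defining $\DX$ with those defining a GDN supertableau to be the main obstacle; the rest is formal.

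To finish, note that every monomial of $\varphi(\mu)$ has weight $0$, so $\varphi(\mu)=\pm\lm{\varphi(\mu)}+\big(\text{weight-}0\text{ monomials strictly below }\lm{\varphi(\mu)}\big)$ for each $\mu\in\tab$. The ordering~\eqref{ordering} is well-founded on $\DX$: an infinite strictly decreasing chain would have eventually constant length $n$, contradicting that the lexicographic order on $2n$-tuples over the well-ordered sets $\mZ_{\geq0}$ and $\XX$ has no infinite descending chain. Since $\mu\mapsto\lm{\varphi(\mu)}$ is moreover a bijection onto the weight-$0$ monomials, the standard unitriangular back-substitution shows that $\{\varphi(\mu):\mu\in\tab\}$ is a $\kk$-basis of $\kk\DDX$; in particular $\varphi(\fns\XX)=\kk\DDX$, completing the proof.
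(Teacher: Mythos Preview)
Your proof is correct and follows essentially the same route as the paper: both show the weight-$0$ subspace is closed under~$\circ$ via the weight identity $\wt(\uu\circ\vv)=\wt(\uu)+\wt(\vv)$, both exhibit every weight-$0$ monomial as the leading monomial of a suitable GDN supertableau, and both conclude by a downward induction (your ``unitriangular back-substitution'') along the order~\eqref{ordering}. You are more explicit than the paper in inverting the leading-monomial formula and in verifying that the order is well-founded, but these are exactly the ``straightforward'' steps the paper leaves to the reader.
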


\begin{proof}
We first show that~$(\kk\DDX,\circ)$ is a GDN superalgebra. It is enough to show that, for all~$\uu$ and~$\vv$ in~$\DDX$, the product~$\uu\cdot \oD(\vv)$ lies in~$\kk\DDX$. Assume that~$\uu$ and~$\vv$ are as in Equation~\ref{xyform} such that~$\uu\cdot \oD\vv\neq 0$ and~$\wt(\uu)=\wt(\vv)=0$. Then by Lemma~\ref{product-fomula} and by the definition of the operation~$\oD$, we obtain that each monomial in~$\uu\cdot\oD(\vv)$ is of weight~
$$\sum_{1\leq t\leq n}i_t+\sum_{1\leq l\leq m}\jj_l+1-n-m+1
=(\sum_{1\leq t\leq n}i_t-n+1)+(\sum_{1\leq l\leq m}\jj_l-m+1)=0.$$

To show that every monomial~$\uu$ of weight~$0$ lies in the subalgebra of~$(\kk\DX,\circ)$ generated by~$\XX$, we use induction on~$\uu$ with respect to the order~$<$ defined by~\eqref{ordering}. For~$\uu=\aa\in \XX$, it is obvious. For~$\uu=\oD^{j_1}(\bb_1)...\oD^{j_m}(\bb_m)$ in~$\DX$ such that~$(\sum_{1\leq l\leq m}j_l)-m+1=0$, we have~$\jj_1=0$, because~$(\jj_1,\bb_1)\leq \pdots\leq(\jj_m,\bb_m)$ lexicographically forces~$\jj_1\leq \pdots\leq \jj_m$.
Therefore, we may assume that~$$\uu=a_{n,2}...a_{n,r_{n}}a_{n-1,2}...a_{n-1,r_{n-1}}...a_{1,2}...\aa_{1,r_{1}}\aa \oD^{r_n}(\aa_{n,1})...\oD^{r_1}(\aa_{1,1})\in \DX,$$
where~$1\leq \rr_\nn\leq \pdots\leq \rr_1$ and~$\nn\geq 1$.
Let~$\mu$ be a GDN supertableau as in Equation~\eqref{form-sup-tab}. Then~$\mu$ lies in the subalgebra of~$(\kk\DX,\circ)$ generated by~$\XX$ and it is straightforward to show that~$\ov\mu=\uu$. By induction hypothesis, the element~$\uu-\lcoe{\mu}^{-1}\mu$ lies in the subalgebra of~$(\kk\DX,\circ)$ generated by~$\XX$. The first claim of the lemma follows.

As for the second claim, notice that by the proof of Theorem~\ref{sup-basis}, the homomorphism~$\varphi$ is an injection. By the first claim, the homomorphism~$\varphi$ is an epimorphism.
\end{proof}

By Lemma~\ref{isomorphism}, we can identify~$(\kk\DDX,\circ)$ with~$(\fns\XX, \circ)$. This identification indicates some properties inherited
from~$\kk\DDX$. For instance, the following corollary offers a sufficient condition under which~$\fns\XX$ is nilpotent.
\begin{coro}
If $\XX=\XX_1$ is a finite set, where  each element of~$\XX_1$ is of parity~$1$, then~$\fns\XX$ is nilpotent. In particular,
 for every GDN superalgebra~$\mA=\mA_0\oplus\mA_1$, if~$\mA$ is generated by finite elements of~$\mA_1$,
 then~$\mA$ is nilpotent.
\end{coro}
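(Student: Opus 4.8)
The plan is to deduce nilpotency from the isomorphism $(\fns\XX,\circ)\cong(\kk\DDX,\circ)$ of Lemma~\ref{isomorphism}, combined with a uniform upper bound on the length of weight-$0$ monomials of~$\DX$ when $\XX_1$ is finite.

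First I would note that the GDN product on $\kk\DX$ is additive on lengths. The derivation $\oD$ carries a monomial to a sum of monomials of the \emph{same} length (it only raises one exponent by~$1$), while by Lemma~\ref{product-fomula} the associative product $\uu\cdot\vv$, when nonzero, is $\pm$ one monomial of length $\ell(\uu)+\ell(\vv)$; hence every monomial occurring in $\uu\circ\vv=\uu\cdot\oD(\vv)$ has length $\ell(\uu)+\ell(\vv)$. Under the isomorphism $\varphi$ of Lemma~\ref{isomorphism} each generator $\aa\in\XX$ is sent to the length-$1$ monomial $\oD^{0}(\aa)$, so a straightforward induction on the bracketing shows that $\varphi$ sends any nonassociative product of $N$ generators of $\fns\XX$ to a linear combination of (weight-$0$) monomials of length exactly~$N$.

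Next I would bound the lengths of weight-$0$ monomials. Let $\XX=\XX_1$ be finite with $|\XX_1|=\tt$, and let $\uu=\oD^{\ii_1}(\aa_1)\cdots\oD^{\ii_\nn}(\aa_\nn)\in\DX$ have weight $0$, i.e. $\ii_1+\cdots+\ii_\nn=\nn-1$. By the defining condition of $\DX$, for each fixed letter $\aa$ the exponents $\ii_\pp$ attached to the copies of $\aa$ occurring in $\uu$ are pairwise distinct nonnegative integers; so if $\aa$ has multiplicity $k_\aa$ in $\uu$, the sum of those exponents is at least $\binom{k_\aa}{2}$. Summing over the at most $\tt$ distinct letters in $\uu$, using $\sum_\aa k_\aa=\nn$ and the convexity bound $\sum_\aa\binom{k_\aa}{2}\geq\frac{\nn(\nn-\tt)}{2\tt}$, and comparing with $\ii_1+\cdots+\ii_\nn=\nn-1$, I obtain $\frac{\nn(\nn-\tt)}{2\tt}\leq\nn-1$, hence $\nn<3\tt$. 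Thus every weight-$0$ monomial of $\DX$ has length at most $3|\XX_1|-1$.

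Combining the two steps: if $N\geq 3|\XX_1|$, then a nonassociative product of $N$ generators of $\fns\XX$ is sent by $\varphi$ to a combination of weight-$0$ monomials of length~$N$ — of which there are none — hence to~$0$; since $\varphi$ is injective (Lemma~\ref{isomorphism}, cf.\ Theorem~\ref{sup-basis}), that product is already $0$ in $\fns\XX$. Therefore $\fns\XX$ is nilpotent, with nilpotency index at most $3|\XX_1|$. For the last assertion, a GDN superalgebra $\mA=\mA_0\oplus\mA_1$ generated by finitely many elements of $\mA_1$ is a homomorphic image of $\fns\XX$ for a finite set $\XX=\XX_1$ of parity-$1$ symbols, and a homomorphic image of a nilpotent superalgebra is nilpotent. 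The only step that is not routine is the combinatorial bound $\nn<3\tt$ in the third paragraph, which is exactly where the ``distinct exponents'' clause defining $\DX$ enters, through $\sum_\aa\binom{k_\aa}{2}\geq\frac{\nn(\nn-\tt)}{2\tt}$.
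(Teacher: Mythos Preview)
Your proof is correct and follows the same strategy as the paper: both reduce via the isomorphism of Lemma~\ref{isomorphism} to showing that weight-$0$ monomials in $\DX$ have bounded length when $\XX=\XX_1$ is finite, and then conclude nilpotency. The only difference is in the counting step for that bound: the paper splits a weight-$0$ monomial as $c_1\cdots c_m\,\oD(b_1)\cdots\oD(b_t)\,\oD^{r_1}(a_1)\cdots\oD^{r_n}(a_n)$ with each $r_i\geq 2$, notes that the exponent-$0$ and exponent-$1$ blocks each have size at most $|\XX_1|$ by pigeonhole, and uses the weight condition to get $n\leq m-1$ and hence $\ell\leq 2m+t-1\leq 3|\XX_1|-1$; you instead group by letter and use the convexity estimate $\sum_a\binom{k_a}{2}\geq \nn(\nn-\tt)/(2\tt)$ to reach the same bound $\nn<3|\XX_1|$.
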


\begin{proof} It is enough to show that, there is some positive integer~$\nn$ such that for every GDN supertableau~$\mu$, the inequality~$\ell(\mu)\geq \nn$ implies that~$\mu=0$.
Let~$\varphi$ be as in Lemma~\ref{isomorphism}. For every \nsupt~$\mu$,
we have $\varphi(\mu)=\sum_{1\leq p\leq \qq} \alpha_p \uu_p$ for some nonzero elements~$\alpha_i$ in~$\kk$, and for some monomials~$\uu_p$ in~$\DX$ such that~$\wt(\uu_p)=0$ and~$\ell(\uu_p)=\ell(\mu)$.
Say~$\uu=\uu_p$ for some integer~$\pp\leq \qq$. Then we may assume that~$$\uu=c_1... c_m\oD(b_1)... \oD(b_t)\oD^{r_1}(a_1)...\oD^{r_n}(a_n)$$ for some elements~$a_i,b_j, c_l$ in~$\XX_1$
such that~$2\leq r_1\leq\pdots \leq r_n$. Then we have~$$n\leq (r_1-1)+ (r_2-1)+ \pdots + (r_n-1)=m-1.$$
 Therefore, we obtain~$\ell(\mu)=\nn+\mm+\tt \leq 2\mm+\tt-1$.
So if~$\ell(\mu)>3(\sharp\XX_1)$, where~$\sharp\XX_1$ is the cardinality of~$\XX_1$, then~$t>(\sharp\XX_1)$ or~$m>(\sharp\XX_1)$,  both of which imply that $\varphi(\mu)=0$.
Since~$\varphi$ is an isomorphism, we get~$\mu=0$.
\end{proof}

Let~$\SS$ be a homogeneous subset of~$\fns\XX$ and let~$\varphi$ be as that in Lemma~\ref{isomorphism}. Denote by~$\IDD{\varphi(\SS)}$ the GDN superalgebra ideal of~$(\kk\DDX,\circ)$ generated by~$\varphi(\SS)$ and denote by~$\kdds{\varphi(\SS)}$  the quotient of~$(\kk\DDX,\circ)$ and~$\IDD{\varphi(\SS)}$.

By Lemma~\ref{isomorphism}, it is clear that~$\kdds{\varphi(\SS)}$ is isomorphic to~$\ns\XX\SS$. Therefore, for the embedding, it is enough to prove that~$(\kdds{\varphi{(\SS)}},\circ)$ can always be embedded into~$(\kds{\varphi(\SS)},\circ)$. We shall first investigate the elements of~$\IdD{\varphi(\SS)}$ and investigate those of~$\IDD{\varphi(\SS)}$.

Since~$\SS$ is homogeneous, it is easy to see that~$\varphi(\SS)$ is also homogeneous, in particular, for every element~$\ss$ in $\SS$, the parity~$\D{\varphi(\ss)}$ of~$\varphi(\ss)$ is well-defined.  Therefore, for every monomial~$\uu$ in~$\DX$, we have~$\uu\cdot \varphi(\ss)=(-1)^{\D{\varphi{(\ss)}}\D\uu}\varphi{(\ss)}\cdot\uu$. By applying right supercommutativity,   we obtain
$$\IdD{\varphi(\SS)}=\mathsf{span}_{\kk}\{\uu\cdot \oD^{\tt}(\varphi{(\ss))} \mid u\in \DX,   t\in \mathbb{Z}_{\geq 0},   s\in S\},$$
where~$\oD^0(\varphi(\ss))$ is defined to be~$\varphi(\ss)$.
We are now ready to describe the ideal of~$(\kk\DDX,\circ)$ generated by the set~$\varphi(\SS)$.
\begin{lemm}\label{Novikov ideal}
Let~$\SS$ be a homogeneous subset of~$\fns\XX$ and let~$\varphi$ be as in Lemma~\ref{isomorphism}.  Suppose that $\IDD{\varphi(\SS)}$ is the  ideal of the GDN superalgebra~$(\kk\DDX,\circ)$ generated by $\varphi(\SS)$. Then we have
 \begin{equation}\label{ids-form}
\IDD{\varphi(\SS)}=\mathsf{span}_{k}\{\uu\cdot \oD^{\tt}(\varphi{(\ss))} \mid u\in \DX, \  t\in \mathbb{Z}_{\geq 0}, \   s\in S, \   \wt(\overline{\uu\cdot \oD^{\tt}(\varphi{(\ss))}})=0\}.
 \end{equation}
\end{lemm}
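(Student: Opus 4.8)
Denote by $\mathcal{M}$ the right-hand side of~\eqref{ids-form}. I would first identify $\mathcal{M}$ with an intersection. Recall that $\wt(\uu\cdot\vv)=\wt(\uu)+\wt(\vv)-1$ whenever $\uu\cdot\vv\neq0$ and that $\wt(\oD(\uu))=\wt(\uu)+1$; since each $\varphi(\ss)$ lies in $\kk\DDX$ and hence has weight $0$, every spanning element $\uu\cdot\oD^{\tt}(\varphi(\ss))$ of $\IdD{\varphi(\SS)}$ is weight-homogeneous of weight $\wt(\uu)+\tt-1$. Thus $\IdD{\varphi(\SS)}$ is $\mathbb{Z}$-graded by weight, its weight-$0$ component is spanned by those $\uu\cdot\oD^{\tt}(\varphi(\ss))$ which are of weight $0$, that is, it is exactly $\mathcal{M}$, and since $\kk\DDX$ is the weight-$0$ part of $\kk\DX$ we obtain $\mathcal{M}=\IdD{\varphi(\SS)}\cap\kk\DDX$. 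So it remains to prove $\IDD{\varphi(\SS)}=\IdD{\varphi(\SS)}\cap\kk\DDX$.

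The inclusion $\IDD{\varphi(\SS)}\subseteq\IdD{\varphi(\SS)}\cap\kk\DDX$ is immediate once one checks that $\IdD{\varphi(\SS)}\cap\kk\DDX$ is a GDN superalgebra ideal of $(\kk\DDX,\circ)$ containing $\varphi(\SS)$: it is a $\kk$-subspace and contains $\varphi(\SS)$, and for $\uu\in\kk\DDX$ and $\vv\in\IdD{\varphi(\SS)}\cap\kk\DDX$ both $\uu\circ\vv=\uu\cdot\oD(\vv)$ and $\vv\circ\uu=\vv\cdot\oD(\uu)$ lie in $\IdD{\varphi(\SS)}$ (an ideal of $(\kk\DX,\cdot)$ stable under $\oD$) and in $\kk\DDX$ (a subalgebra of $(\kk\DX,\circ)$ by Lemma~\ref{isomorphism}), hence in the intersection; now use that $\IDD{\varphi(\SS)}$ is the smallest such ideal.

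The heart of the matter is the reverse inclusion $\mathcal{M}\subseteq\IDD{\varphi(\SS)}$, for which it suffices to show that every generator $\ww=\uu\cdot\oD^{\tt}(\varphi(\ss))$ of $\mathcal{M}$, so $\uu\in\DX$, $\ss\in\SS$ and $\wt(\uu)=1-\tt$, lies in $\IDD{\varphi(\SS)}$; I would argue by induction with primary parameter $\ell(\uu)+\tt$. If $\tt\geq1$ then $\wt(\uu)=1-\tt\leq0$ forces $\uu$ to contain a letter $\oD^{0}(\aa)=\aa$, and by supercommutativity of $\kk\DX$ we may write $\uu=\pm\,\aa\cdot\uu'$; then $\wt(\uu')=2-\tt$, so $\uu'\cdot\oD^{\tt-1}(\varphi(\ss))\in\kk\DDX$ has weight $0$ and lies in $\IDD{\varphi(\SS)}$ by induction, whence $\aa\circ\bigl(\uu'\cdot\oD^{\tt-1}(\varphi(\ss))\bigr)\in\IDD{\varphi(\SS)}$; expanding, $\aa\circ\bigl(\uu'\cdot\oD^{\tt-1}(\varphi(\ss))\bigr)=\bigl(\aa\cdot\oD(\uu')\bigr)\cdot\oD^{\tt-1}(\varphi(\ss))+(\aa\cdot\uu')\cdot\oD^{\tt}(\varphi(\ss))$, and since $\aa\cdot\oD(\uu')$ is a sum of weight-$(2-\tt)$ monomials of length $\ell(\uu)$, each contributing by induction (with parameter $\ell(\uu)+\tt-1$) a member of $\IDD{\varphi(\SS)}$ after multiplication by $\oD^{\tt-1}(\varphi(\ss))$, we conclude $\ww=\pm(\aa\cdot\uu')\cdot\oD^{\tt}(\varphi(\ss))\in\IDD{\varphi(\SS)}$. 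If $\tt=0$, then $\wt(\uu)=1$; if every $\oD$-exponent of $\uu$ equals $1$, say $\uu=\oD(\aa_1)\cdots\oD(\aa_n)$, then $\ww=\pm\,(\cdots((\varphi(\ss)\circ\aa_1)\circ\aa_2)\cdots\circ\aa_n)\in\IDD{\varphi(\SS)}$ directly; otherwise $\uu$ contains a letter $\oD^{m}(\aa)$ with $m\geq2$, and peeling $\oD^{m}(\aa)$ off together with $m-2$ further letters of $\oD$-exponent $0$ one writes $\uu=\pm\,y_1\cdots y_{m-1}\cdot\oD^{m}(\aa)$ where $y_1,\dots,y_{m-2}$ are letters of $\oD$-exponent $0$ and $y_{m-1}$ is a monomial of weight $0$ (so that every $y_i$ has weight $0$); here $\ww=\pm\,\varphi(\ss)\cdot y_1\cdots y_{m-1}\cdot\oD^{m}(\aa)$ arises as one summand of $\varphi(\ss)\circ\bigl(y_1\circ(y_2\circ(\cdots(y_{m-1}\circ\aa)\cdots))\bigr)\in\IDD{\varphi(\SS)}$, the remaining summands being rewritten, via repeated use of left supersymmetry $x\circ(y\circ z)-(x\circ y)\circ z=(-1)^{\D{x}\D{y}}\bigl(y\circ(x\circ z)-(y\circ x)\circ z\bigr)$ and the product formula of Lemma~\ref{product-fomula}, as combinations of generators of $\mathcal{M}$.

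The main obstacle is precisely this last step: when one unpacks a derivative power $\oD^{m}(\aa)$ with $m\geq3$ occurring inside $\uu$, the error summands produced have the same length as $\uu$ and still carry $\tt=0$, so the bare induction on $\ell(\uu)+\tt$ does not decrease on them. I expect the fix to require a secondary well-founded measure on $\uu$ --- for instance on the multiset of $\oD$-exponents occurring in $\uu$, set up so that replacing a unit of a large exponent by a smaller one strictly decreases it, with the case of a repeated maximal exponent handled first by a preliminary reduction --- together with a verification that every error summand is strictly smaller in this refined order. Granting that bookkeeping the induction closes, yielding $\mathcal{M}\subseteq\IDD{\varphi(\SS)}$ and hence the formula~\eqref{ids-form}.
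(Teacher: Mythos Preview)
Your preliminary reduction is clean and correct: identifying $\mathcal{M}$ with $\IdD{\varphi(\SS)}\cap\kk\DDX$ via the weight grading, and obtaining the inclusion $\IDD{\varphi(\SS)}\subseteq\mathcal{M}$ from the fact that the intersection is a $\circ$-ideal containing $\varphi(\SS)$, is exactly right (and in fact slicker than the paper's phrasing of that direction). Your $\tt\geq1$ step in the reverse inclusion is also correct and is essentially the paper's first sub-claim, reformulated: peeling off one exponent-$0$ letter at a time reduces $\tt$ by one while keeping the total parameter $\ell(\uu)+\tt$ strictly decreasing.

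The gap you identify in the $\tt=0$ case is real, but it is not confined to $m\geq3$. Already for $m=2$, writing $\uu=\pm\,y_1\cdot\oD^2(\aa)$ with $y_1$ of weight $0$ and expanding $\varphi(\ss)\circ(y_1\circ\aa)$ produces error terms $\varphi(\ss)\cdot v\cdot\oD(\aa)$ with $v$ a monomial summand of $\oD(y_1)$; here $v\cdot\oD(\aa)$ has the same length as $\uu$ and still $\tt=0$, so your primary induction does not move. Your proposed secondary induction on the multiset of exponents could in principle be made to work, but the bookkeeping is awkward because applying $\oD$ to $y_1$ can raise an exponent that was already maximal.

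The paper's fix is to change the shape of the auxiliary statement rather than the induction measure. Instead of trying to absorb all of $\uu$ at once, it proves: \emph{if $\ff\in\IDD{\varphi(\SS)}$ is homogeneous of weight $0$, then $\ff\cdot\aa_1\cdots\aa_{r-1}\cdot\oD^{r}(\cc)\in\IDD{\varphi(\SS)}$}, by induction on $r$ alone. The inductive step is
\[
\ff\cdot\aa_1\cdots\aa_{r-1}\oD^{r}(\cc)
=\ff\circ\bigl(\aa_1\cdots\aa_{r-1}\oD^{r-1}(\cc)\bigr)
-\sum_{i}\pm\bigl(\ff\cdot\aa_1\cdots\widehat{\aa_i}\cdots\aa_{r-1}\oD^{r-1}(\cc)\bigr)\circ\aa_i,
\]
and each error term has the same form with $r$ replaced by $r-1$, so the induction closes cleanly. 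With this lemma in hand, and with your $\tt\geq1$ step rephrased as ``$\oD^{\tt}(\varphi(\ss))\cdot\bb_1\cdots\bb_\tt\in\IDD{\varphi(\SS)}$'', one decomposes a general weight-$0$ generator as $\pm\,\bigl(\oD^{\tt}(\varphi(\ss))\cdot\bb_1\cdots\bb_\tt\bigr)\cdot\prod_j\bigl(\aa_{j,1}\cdots\aa_{j,r_j-1}\oD^{r_j}(\cc_j)\bigr)$ and absorbs the blocks one at a time. The key point you are missing is to put what has already been absorbed into $\ff$, so that the induction parameter is just the single exponent $r$ of the block currently being attached, not any global quantity depending on $\uu$.
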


\begin{proof}
Since~$\SS$ is homogeneous, it is clear that the right part of Equation~\ref{ids-form} is an ideal including~$\varphi(\SS)$. So to prove the lemma, it is enough to show that~$\uu\cdot \oD^{\tt}(\varphi{(\ss))}$ lies in~$\IDD{\varphi(\SS)}$
whenever~$\wt(\overline{\uu\cdot \oD^{\tt}(\varphi{(\ss))}})=0$. Since every monomial in the expansion of~$\oD^{\tt}(\varphi(\ss))$ has weight~$\tt$, we may suppose that~$$\uu=a_{1}...  a_{m}b_{1}...b_{t}D^{r_1}(c_{1})...D^{r_n}(c_{n})$$
lies in~$\DX$ such that~$m=r_1+\dots +r_n-n$ and $ r_n\geq r_{n-1}\geq \dots \geq r_1\geq 1$.
 Then in~$\kk\DDX$, we have
$$ \uu\cdot \oD^{\tt}(\varphi{(\ss))}=\alpha  \oD^{t}(s) \cdot \bb_{1}...b_{t}
\cdot \aa_{1}...\aa_{m}\oD^{r_1}(\cc_{1})...\oD^{r_n}(\cc_{n}) $$ for some integer~$\alpha$.
So the lemma will be clear if we show that the following two claims hold:

\ITEM1  The polynomial~$D^{t}(\varphi(\ss))\cdot b_{1}... b_{t}$ lies in~$\IDD{\varphi(\SS)}$ if~$\ss$ lies in $\SS$;

\ITEM2  The polynomial~$f\cdot a_{1}...a_{r-1}\cdot D^{r}(c) $ lies in~$\IDD{\varphi(\SS)}$ if $\ff$ is a homogeneous polynomial in~$\IDD{\varphi(\SS)}$.

 To prove~\ITEM1, we use induction on $t$. For~$t=0$, we get~$D^{t}(\varphi(\ss))\cdot b_{1}... b_{t}=\varphi(\ss)\in \IDD{\varphi(\SS)}$.
 For~$\tt>0$, the polynomial
\begin{multline*}
 D^{\tt}(s)\cdot b_{1}...b_{\tt}
 = (-1)^{|b_{1}||s|}b_{1}\cdot D^{\tt}(s)\cdot b_{2}\cdots   b_{\tt}\\
 = (-1)^{|b_{1}||s|}(b_{1}\circ(D^{\tt-1}(s)\cdot b_{2}...   b_{\tt}))
 -(-1)^{|b_{1}||s|}b_{1}\cdot \sum_{2\leq i \leq \tt}(D^{\tt-1}(s)\cdot b_{2}...\bb_{i-1}\cdot (Db_{i})\cdot b_{i+1}... b_{\tt})\\
 =  (-1)^{|b_{1}||s|}(b_{1}\circ(D^{\tt-1}(s)\cdot b_{2}...b_{\tt}))
 -\sum_{2\leq i \leq \tt} (-1)^{|b_{i}||b_{i+1}\cdots   b_{\tt}|}((D^{\tt-1}(s) \cdot b_{1}...  b_{i-1}b_{i+1}...b_{\tt})\circ b_{i})
\end{multline*}
lies in~$\IDD{\varphi(\SS)}$ by induction hypothesis.

To prove~\ITEM2,   we use induction on~$\rr$.
For~$\rr=1$, we obtain~$\ff\cdot \oD(\cc)=\ff\circ \cc\in\IDD{\varphi(\SS)}$.
For~$\rr>1$, the polynomial
\begin{multline*}
 f\cdot a_{1}...a_{r-1} D^{r}(c)
 = (\ff\circ(a_{1}...a_{r-1} D^{r-1}(c)))
 -\sum_{1\leq i\leq \rr-1}(\ff\cdot a_{1}...\aa_{i-1}\cdot \oD\aa_i \cdot \aa_{i+1}...a_{r-1} D^{r-1}(c))\\
 =(\ff\circ(a_{1}...a_{r-1} D^{r-1}(c)))
 -\sum_{1\leq i\leq \rr-1}(-1)^{|\aa_i||\aa_{i+1}...\aa_{r-1}c|}((\ff\cdot a_{1}...\aa_{i-1} \aa_{i+1}...a_{r-1} D^{r-1}(c))\circ \aa_i)
\end{multline*}
lies in~$\IDD{\varphi(\SS)}$ by induction hypothesis.
\end{proof}

We then have the following Poincar\'{e}-Birkhoff-Witt type theorem.
\begin{theorem}
Every GDN superalgebra $\ns\XX\SS$ can be  embedded into
its universal enveloping associative differential supercommutative algebra~$\kds{\varphi(\SS)}$, where~$$\varphi  :\fns\XX\longrightarrow \kk\DX$$ is the GDN superalgebra
 homomorphism induced by~$\varphi(\aa)=\aa$ for every~$\aa$ in~$\XX$.
\end{theorem}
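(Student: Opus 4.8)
The plan is to derive the theorem from Lemma~\ref{Novikov ideal} by treating the weight as a $\mZ$-grading on $\kk\DX$. As already observed before Lemma~\ref{Novikov ideal}, the GDN superalgebra $\ns\XX\SS$ is isomorphic to $\kdds{\varphi(\SS)}=(\kk\DDX,\circ)/\IDD{\varphi(\SS)}$, while $\kds{\varphi(\SS)}=\kk\DX/\IdD{\varphi(\SS)}$ is its universal enveloping \adsa. So it suffices to prove that the inclusion $\iota\colon\kk\DDX\hookrightarrow\kk\DX$ of GDN superalgebras (Lemma~\ref{isomorphism}) descends to an \emph{injective} homomorphism $\overline{\iota}\colon\kdds{\varphi(\SS)}\to\kds{\varphi(\SS)}$. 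Since the spanning set $\{\uu\cdot\oD^{\tt}(\varphi(\ss))\}$ of $\IDD{\varphi(\SS)}$ given by Lemma~\ref{Novikov ideal} is a subset of the spanning set of $\IdD{\varphi(\SS)}$, the inclusion $\IDD{\varphi(\SS)}\subseteq\IdD{\varphi(\SS)}\cap\kk\DDX$ is automatic and $\overline\iota$ is well defined; the real content is the reverse inclusion $\IdD{\varphi(\SS)}\cap\kk\DDX\subseteq\IDD{\varphi(\SS)}$, which amounts to $\ker\overline\iota=0$.

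For the reverse inclusion I would first note that the weight behaves like a degree: $\wt(\uu\cdot\vv)=\wt(\uu)+\wt(\vv)-1$ whenever $\uu\cdot\vv\neq0$ (already recorded in the text), and $\wt(\oD(\uu))=\wt(\uu)+1$, since every monomial occurring in $\oD(\uu)$ has the same length as $\uu$ and one extra derivative. Writing $V_w$ for the $\kk$-span of the monomials of $\DX$ of weight $w$, this gives a vector-space decomposition $\kk\DX=\bigoplus_{w\in\mZ}V_w$ with $V_w\cdot V_{w'}\subseteq V_{w+w'-1}$, $\oD(V_w)\subseteq V_{w+1}$, and $\kk\DDX=V_0$ by the very definition of $\DDX$. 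Since $\varphi$ maps $\fns\XX$ into $\kk\DDX=V_0$, every $\varphi(\ss)$ with $\ss\in\SS$ lies in $V_0$, so each spanning element $\uu\cdot\oD^{\tt}(\varphi(\ss))$ of $\IdD{\varphi(\SS)}$ is weight-homogeneous, namely it lies in $V_{\wt(\uu)+\tt-1}$. Hence $\IdD{\varphi(\SS)}$ is a graded subspace: $\IdD{\varphi(\SS)}=\bigoplus_{w}\big(\IdD{\varphi(\SS)}\cap V_w\big)$.

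Finally, given $\ff\in\IdD{\varphi(\SS)}\cap\kk\DDX=\IdD{\varphi(\SS)}\cap V_0$, I would write $\ff=\sum_k\alpha_k\,\uu_k\cdot\oD^{\tt_k}(\varphi(\ss_k))$ and project onto $V_0$; only the summands with $\uu_k\cdot\oD^{\tt_k}(\varphi(\ss_k))\in V_0=\kk\DDX$ survive, i.e.\ those with $\wt(\lm{\uu_k\cdot\oD^{\tt_k}(\varphi(\ss_k))})=0$, and Lemma~\ref{Novikov ideal} says each of these already lies in $\IDD{\varphi(\SS)}$. Thus $\ff\in\IDD{\varphi(\SS)}$, so $\overline\iota$ is injective and $\ns\XX\SS\cong\kdds{\varphi(\SS)}$ embeds into $\kds{\varphi(\SS)}$. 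I do not anticipate a genuine obstacle; the one point to be careful about is that the spanning set of $\IdD{\varphi(\SS)}$ really does consist of weight-homogeneous elements, so that extracting the weight-$0$ part is harmless — which is why the explicit descriptions of $\IdD{\varphi(\SS)}$ and of $\IDD{\varphi(\SS)}$ in Lemma~\ref{Novikov ideal} were prepared in advance, together with the fact from Lemma~\ref{isomorphism} that $\varphi$ lands in the weight-$0$ subalgebra $\kk\DDX$.
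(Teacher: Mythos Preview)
Your proposal is correct and follows essentially the same route as the paper: both reduce the embedding to the equality $\IDD{\varphi(\SS)}=\IdD{\varphi(\SS)}\cap\kk\DDX$ and then invoke the second isomorphism theorem. The paper simply asserts this equality as an immediate consequence of the two spanning-set descriptions in and before Lemma~\ref{Novikov ideal}, whereas you spell out the underlying reason---that the weight gives a $\mZ$-grading of $\kk\DX$ in which every spanning element $\uu\cdot\oD^{\tt}(\varphi(\ss))$ of $\IdD{\varphi(\SS)}$ is homogeneous, so the weight-$0$ component of $\IdD{\varphi(\SS)}$ is exactly the span described in Lemma~\ref{Novikov ideal}.
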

\begin{proof}
By Lemmas~\ref{isomorphism} and~\ref{Novikov ideal}, we obtain
\begin{multline*}
  \ns\XX\SS\cong \kdds{\varphi(\SS)}=\frac{(\kk\DDX,\circ)}{\IDD{\varphi(\SS)} }=\frac{(\kk\DDX, \circ)}{\IdD{\varphi{(\SS)}}\cap \kk\DDX} \\
 \cong \frac{\kk\DDX+\IdD{\varphi{(\SS)}}}{\IdD{\varphi{(\SS)}} } \leq  \frac{(\kk\DX, \circ)}{\IdD{\varphi{(\SS)}}} =\kds{\varphi(\SS)}.
\end{multline*}
The lemma follows.
\end{proof}

\section{Engel Theorem}\label{nil-sec}
Our aim in this section is to prove an Engel theorem for GDN superalgebras (Theorem~\ref{Engel-theorem}), which is based on what was done for GDN algebras~\cite{engel novikov}. In this section, we assume that  the characteristic~$\chart(\kk)$  of the field~$\kk$ is~$0$, and assume that~$\mA=\mA_0\oplus\mA_1$ is a GDN superalgebra.

For every~$\xx$ in~$(\mA,\circ)$, let~$\rho_{\xx}$ be the right multiplication operator
$$\rho_\xx: \mA\longrightarrow \mA, \  \rho_\xx(\yy)= (\yy\circ \xx) \mbox{ for every }\yy \mbox{ in }\mA.$$ Then~$\mA$ is called right-nil of bound index if, for some positive integer~$n$, for every~$x\in \mA$, we have~$\rho_{x}^{n-1}(x)=0$.
We use the notation~$\xx^n\LSUB$ for~$\rho_{x}^{n-1}(x)$.
For all~$\xx_1\wdots\xx_n$ in~$\mA$, define
$$
\lnormed{\xx_1}{\xx_n}= ((...((x_1\circ x_2) \circ \xx_3)\circ\pdots \circ)\circ x_n) \mbox{ (left-normed bracketing).}
$$
 For all subspace~$\mV_1\wdots\mV_n$ of~$\mA$, define
$$
[\mV_1\wdots\mV_n]\LSUB= \mathsf{span}_{k}\{ \lnormed{\xx_1}{\xx_n} \mid  x_i \in \mV_{i}, 1\leq i\leq n \}.
$$
In particular, we obtain
$$
 \mV^n\LSUB=[\underbrace{\mV\wdots\mV}_{\mbox{$\nn$ times}}]\LSUB \mbox{ and } [\mV_1,\mV_2]\LSUB=(\mV_1\circ \mV_2)=\mathsf{span}_{\kk}\{(\xx_1\circ\xx_2)\mid \xx_1\in \mV_1, \xx_2\in\mV_2\}.
$$
We call an algebra~$\mA$  right-nilpotent if $\mA^{n}\LSUB=0$ for some positive integer~$n$.
Finally, for every subspace~$\mV$ of~$\mA$, for every integer~$n\geq 1$, define the subspace~$\mV^n$ of~$\mA$ inductively as follows:

  \ITEM1 $\mV^1=\mV$ and~$\mV^2=(\mV \circ \mV)$;

  \ITEM2 $\mV^n=\sum_{1\leq i\leq n-1}(\mV^i \circ \mV^{n-i})$.\\
We call an algebra~$\mA$ nilpotent if $\mA^{n}=0$ for some positive integer~$n$.

Since~$\mA_0$ is an ordinal GDN algebra, by Lemmas~6 and~7 in~\cite{engel novikov}, we get the following lemma, which shows that every right-nil GDN algebra of bound index is right nilpotent.  For the convenience of the readers, we quickly repeat the argument.
\begin{lemm}\cite{engel novikov}\label{engel theorem}
Let~$\mA=\mA_0\oplus\mA_1$ be a GDN superalgebra over a field of characteristic~$0$. If for some positive integer~$n$, for every~$\xx\in \mA_{0}$,
we have~$\xx^{n}\LSUB=0$, then $(\mA_0)^{n+1}\LSUB=0$.
\end{lemm}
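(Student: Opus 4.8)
The plan is to reduce the statement to a fact about the ordinary left Novikov algebra $\mA_0$. Indeed $\mA_0\circ\mA_0\subseteq\mA_0$, and the left supersymmetry and right supercommutativity, restricted to $\mA_0$ (where all parities are~$0$), become ordinary left symmetry and right commutativity; so $(\mA_0,\circ)$ is a left Novikov algebra, and the assertion is exactly Lemmas~6 and~7 of~\cite{engel novikov}, whose proof I would reproduce. The one structural input I need is that right multiplications commute: for $\uu,\vv,\ww$ in $\mA_0$, right supercommutativity gives $(\uu\circ\vv)\circ\ww=(\uu\circ\ww)\circ\vv$, i.e. $\rho_\ww\rho_\vv=\rho_\vv\rho_\ww$ on $\mA_0$. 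Hence a left-normed product is an iterated right multiplication, $\lnormed{\zz_1}{\zz_\kk}=\rho_{\zz_\kk}\cdots\rho_{\zz_2}(\zz_1)$, and it is symmetric in $\zz_2\wdots\zz_\kk$; in particular, proving $(\mA_0)^{\nn+1}\LSUB=0$ amounts to proving $\bigl(\prod_{\ii\neq1}\rho_{\zz_\ii}\bigr)(\zz_1)=0$ for all $\zz_1\wdots\zz_{\nn+1}$ in $\mA_0$, the product being over $\ii\in\{2\wdots\nn+1\}$.

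I would then linearize the hypothesis. The map $\xx\mapsto\xx^{\nn}\LSUB=\rho_\xx^{\nn-1}(\xx)$ is homogeneous of degree $\nn$ on $\mA_0$, so, since $\chart(\kk)=0$, its identical vanishing is equivalent to the vanishing of its full polarization $\sum_{\sigma\in S_\nn}\rho_{\xx_{\sigma(2)}}\cdots\rho_{\xx_{\sigma(\nn)}}(\xx_{\sigma(1)})$. Merging, via commutativity of the $\rho$'s, the $(\nn-1)!$ equal summands that share a first argument, and dividing by $(\nn-1)!$, this becomes
\[
\sum_{\ii=1}^{\nn}\Bigl(\prod_{\jj\neq\ii}\rho_{\xx_\jj}\Bigr)(\xx_\ii)=0\qquad\text{for all }\xx_1\wdots\xx_\nn\ \text{in}\ \mA_0,
\]
where the product is over $\jj\in\{1\wdots\nn\}\sm\{\ii\}$; being multilinear, it holds with repeated arguments as well.

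Finally I would run the usual one-step bump. Fix $\zz_1\wdots\zz_{\nn+1}$ in $\mA_0$ and put $\tt_\jj:=\bigl(\prod_{\ii\neq\jj}\rho_{\zz_\ii}\bigr)(\zz_\jj)$ for $1\leq\jj\leq\nn+1$ (well defined by the symmetry above, and equal to a left-normed product of the $\zz_\ii$). For each fixed $\jj_0$, apply the linearized identity to the $\nn$-tuple $(\zz_\ii)_{\ii\neq\jj_0}$ and then apply $\rho_{\zz_{\jj_0}}$ to both sides; by commutativity of the right multiplications the $\ii$-th summand of the result is exactly $\tt_\ii$, so $\sum_{\ii\neq\jj_0}\tt_\ii=0$. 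Setting $\Sigma:=\sum_{\ii=1}^{\nn+1}\tt_\ii$, this reads $\Sigma-\tt_{\jj_0}=0$ for every $\jj_0$, whence all $\tt_{\jj_0}$ equal $\Sigma$, so $\Sigma=(\nn+1)\Sigma$, i.e. $\nn\Sigma=0$; as $\chart(\kk)=0$ and $\nn\geq1$, we get $\Sigma=0$ and hence every $\tt_\jj=0$. In particular $\lnormed{\zz_1}{\zz_{\nn+1}}=\tt_1=0$, and since the $\zz_\ii$ are arbitrary, $(\mA_0)^{\nn+1}\LSUB=0$.

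The computation is short; the only delicate point is the linearization over $\chart(\kk)=0$ together with the bookkeeping that turns the polarized degree-$\nn$ identity, after one extra right multiplication, into the linear system $\Sigma-\tt_{\jj_0}=0$ — everything else is dictated by the commutativity of the right multiplication operators. Note that left (super)symmetry plays no role in this lemma; it is needed only afterwards, to pass from right-nilpotency of $\mA_0$ to nilpotency of $\mA^2$.
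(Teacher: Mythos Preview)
Your proof is correct and takes essentially the same approach as the paper: polarize the degree-$n$ identity $x^{n}_{\mathsf{L}}=0$ on~$\mA_0$ (the paper does this via an explicit inclusion--exclusion formula for $S(x_1,\ldots,x_t):=\sum_{\sigma\in S_t}[x_{\sigma(1)},\ldots,x_{\sigma(t)}]_{\mathsf{L}}$, you invoke the standard linearization lemma in characteristic~$0$), and then pass from $n$ to $n+1$ using commutativity of the operators~$\rho_{x_i}$. The only cosmetic difference is in that last step: the paper uses the recursion $S(x_1,\ldots,x_{t+1})=t\bigl(S(x_2,\ldots,x_{t+1})\circ x_1\bigr)+t!\,[x_1,\ldots,x_{t+1}]_{\mathsf{L}}$ together with $S=0$ at $t=n$ and $t=n+1$, whereas you set up and solve the linear system $\Sigma-t_{j_0}=0$.
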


\begin{proof}  For all~$\xx_1\wdots\xx_t$ in~$\mA_{0}$, define
$$
S(\xx_1,\xx_2,\dots, \xx_t)=\sum_{\sigma \in S_{t}} \lnormed{\xx_{\sigma (1)},\xx_{\sigma (2)}}{\xx_{\sigma (t)}},
$$
where~$S_t$ is the symmetric group of order~$t$.
Then for every term~$\mu$ occurred in the polynomial
$(\xx_1+\xx_2+\cdots + \xx_t)^{t}\LSUB-S(\xx_1,\xx_2,\dots, \xx_t)$,
there is some integer~$\ii\leq \tt$ such that the letter~$\xx_i$ does not occur in~$\mu$.
By the inclusion-exclusion properties, we get
$$
(\xx_1+\xx_2+\cdots + \xx_t)^{t}\LSUB-S(\xx_1,\xx_2,\dots, \xx_t)=
\sum_{\varnothing \neq \{i_1,i_2,\cdots i_r\}\subsetneqq \{1,\dots, t\}}(-1)^{t-r+1}(\xx_{i_1}+\cdots + \xx_{i_r})^{t}\LSUB.
$$
Therefore,  for~$\tt\geq n$, we get~$S(\xx_1,\xx_2,\dots, \xx_t)=0$.
Moreover, using right (super)commutativity, it is straightforward to show that
$$
S(\xx_1,\dots, \xx_{t+1})
=t(S(\xx_2,\dots, \xx_{t+1}) \circ \xx_{1})+
t!\lnormed{\xx_1}{\xx_{t+1}}.
$$
Since~$\chart(\kk)=0$, we have  $\lnormed{\xx_1}{\xx_{t+1}}=0$ for
every~$t\geq n+1$.
\end{proof}

The following lemma shows that, if a GDN superalgebra~$\mA$ is right nilpotent, then~$\mA^2$ is nilpotent. This result is directly reminiscent of that for GDN algebras~\cite{engel novikov}.

\begin{lemm}\label{A2 in A}
Let $\mA$ be a  GDN superalgebra. Then for every positive integer~$n$, the space~$\mA^{n}\LSUB$ is an ideal of~$\mA$,
and we have~$\mathcal(\mA^{2})^{n}\subseteq \mA^{n+1}\LSUB$.
\end{lemm}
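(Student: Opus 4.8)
The plan is to prove the two assertions in order, since the second relies on the first. For the first assertion, that $\mA^n\LSUB$ is an ideal, I would argue by induction on $n$. For $n=1$ there is nothing to prove. For the inductive step, take a generator $\lnormed{\xx_1}{\xx_n}$ of $\mA^n\LSUB$ with all $\xx_i\in\mA$ and an arbitrary $\yy\in\mA$; I need to show that both $(\lnormed{\xx_1}{\xx_n}\circ\yy)$ and $(\yy\circ\lnormed{\xx_1}{\xx_n})$ lie in $\mA^n\LSUB$. The first is immediate since $(\lnormed{\xx_1}{\xx_n}\circ\yy)=\lnormed{\xx_1,\ldots,\xx_n,\yy}{}$ is by definition a left-normed product of $n+1$ factors; hence it lies in $\mA^{n+1}\LSUB\subseteq$ (after one more step) — wait, more carefully, I want $\mA^n\LSUB$ to absorb $\mA$, so I should instead observe $(\lnormed{\xx_1}{\xx_n}\circ\yy)\in[\mA,\ldots,\mA,\mA]\LSUB = \mA^{n+1}\LSUB$, and it is cleaner to prove directly that each $\mA^n\LSUB$ is a left ideal and a right ideal simultaneously; the right-ideal part is trivial as just noted (with $\mA^{n+1}\LSUB\subseteq\mA^n\LSUB$ following once we also know absorption, so one sets up the induction to prove ``$\mA^n\LSUB$ is an ideal'' as a single statement). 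For the left-ideal part, $(\yy\circ\lnormed{\xx_1}{\xx_n})$, I would use the left supersymmetry identity to pull $\yy$ inward: writing $\lnormed{\xx_1}{\xx_n}=(\lnormed{\xx_1}{\xx_{n-1}}\circ\xx_n)$ and applying left supersymmetry to the triple $(\yy,\lnormed{\xx_1}{\xx_{n-1}},\xx_n)$ rewrites $(\yy\circ(\lnormed{\xx_1}{\xx_{n-1}}\circ\xx_n))$ as a $\kk$-combination of $((\yy\circ\lnormed{\xx_1}{\xx_{n-1}})\circ\xx_n)$, $(\lnormed{\xx_1}{\xx_{n-1}}\circ(\yy\circ\xx_n))$ and $((\lnormed{\xx_1}{\xx_{n-1}}\circ\yy)\circ\xx_n)$. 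The first and third terms are of the form (something in $\mA^{n-1}\LSUB$, which is an ideal by induction, hence $\yy\circ\lnormed{\xx_1}{\xx_{n-1}}\in\mA^{n-1}\LSUB$) composed with $\xx_n$, so they lie in $[\mA^{n-1}\LSUB,\mA]\LSUB\subseteq\mA^n\LSUB$; the middle term is $(\lnormed{\xx_1}{\xx_{n-1}}\circ\zz)$ with $\zz=\yy\circ\xx_n\in\mA$, again in $[\mA^{n-1}\LSUB,\mA]\LSUB\subseteq\mA^n\LSUB$. This closes the induction.

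For the second assertion, $(\mA^2)^n\subseteq\mA^{n+1}\LSUB$, I would induct on $n$. The base case $n=1$ reads $\mA^2=(\mA\circ\mA)=[\mA,\mA]\LSUB=\mA^2\LSUB$, which holds by the definitions recalled just before the lemma. For the inductive step, by definition $(\mA^2)^n=\sum_{i=1}^{n-1}\bigl((\mA^2)^i\circ(\mA^2)^{n-i}\bigr)$. Fix $i$ with $1\le i\le n-1$. By the induction hypothesis $(\mA^2)^i\subseteq\mA^{i+1}\LSUB$ and $(\mA^2)^{n-i}\subseteq\mA^{n-i+1}\LSUB$, so it suffices to show $\bigl(\mA^{i+1}\LSUB\circ\mA^{n-i+1}\LSUB\bigr)\subseteq\mA^{n+1}\LSUB$. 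Since $\mA^{n-i+1}\LSUB$ is an ideal of $\mA$ by the first part of the lemma, and $n-i+1\ge 2$, I expect to reduce this to the statement that $\bigl(\mA^{i+1}\LSUB\circ\mA\bigr)$ lands appropriately combined with the extra left-normed factors hidden inside the second slot: concretely, a generator of $\mA^{n-i+1}\LSUB$ is $\lnormed{\yy_1}{\yy_{n-i+1}}=(\lnormed{\yy_1}{\yy_{n-i}}\circ\yy_{n-i+1})$, so a generator of $\bigl(\mA^{i+1}\LSUB\circ\mA^{n-i+1}\LSUB\bigr)$ is $\bigl(u\circ(\vv\circ\yy_{n-i+1})\bigr)$ with $u\in\mA^{i+1}\LSUB$, $\vv=\lnormed{\yy_1}{\yy_{n-i}}\in\mA^{n-i}\LSUB$ (interpreting $\mA^0\LSUB$ as $\mA$ when $n-i=0$, i.e.\ when $i=n-1$, which is the case one should isolate first). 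Applying left supersymmetry to the triple $(u,\vv,\yy_{n-i+1})$ converts $\bigl(u\circ(\vv\circ\yy_{n-i+1})\bigr)$ into a $\kk$-combination of $\bigl((u\circ\vv)\circ\yy_{n-i+1}\bigr)$, $\bigl(\vv\circ(u\circ\yy_{n-i+1})\bigr)$ and $\bigl((\vv\circ u)\circ\yy_{n-i+1}\bigr)$. Now $(u\circ\vv)$ and $(\vv\circ u)$ lie in $\mA^{i+1}\LSUB$ (it is an ideal, or directly $(u\circ\vv)\in\mA^{i+2}\LSUB$ and, by induction on $n-i$, one reduces $\vv$ to $\mA^{n-i}\LSUB$ acting on $u\in\mA^{i+1}\LSUB$), and $(u\circ\yy_{n-i+1})\in\mA^{i+2}\LSUB$; a downward induction on $n-i$ (the ``depth'' of the right factor) together with the ideal property then places each of the three resulting terms in $\mA^{n+1}\LSUB$.

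\textbf{Main obstacle.} The delicate point is the bookkeeping in the second part: when I peel factors off the right-normed-looking object $(\vv\circ\yy_{n-i+1})$ and apply left supersymmetry, one of the three resulting terms, namely $\bigl(\vv\circ(u\circ\yy_{n-i+1})\bigr)$, again has the original shape ``element of $\mA^{n-i}\LSUB$ composed on the left-outside of something,'' so the naive induction is circular unless one sets up a secondary induction on the length $n-i$ of $\vv$ (or equivalently does a simultaneous induction establishing $\bigl(\mA^p\LSUB\circ\mA^q\LSUB\bigr)\subseteq\mA^{p+q}\LSUB$ for all $p,q\ge 1$, which is the clean statement to prove and from which the lemma is immediate by taking $p=i+1$, $q=n-i+1$ and summing). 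I would therefore restructure the proof around the auxiliary claim $\bigl(\mA^p\LSUB\circ\mA^q\LSUB\bigr)\subseteq\mA^{p+q}\LSUB$, proved by induction on $q$ with $q=1$ trivial, and the inductive step exactly the left-supersymmetry expansion above, using the first part of the lemma (ideal property) to handle the $(u\circ\vv)$, $(\vv\circ u)$ factors. The parity signs $(-1)^{|x||y|}$ from the super version of left supersymmetry play no role here since they only alter coefficients, not which subspace a term lies in. No other step presents a real difficulty.
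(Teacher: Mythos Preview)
Your argument for the first assertion (that $\mA^{n}\LSUB$ is an ideal) is correct and matches the paper's: induction on $n$, the right-ideal part being trivial and the left-ideal part coming from the left-supersymmetry expansion together with the inductive hypothesis on $\mA^{n-1}\LSUB$.

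The second assertion, however, has a genuine gap. Your proposed auxiliary claim $\bigl(\mA^{p}\LSUB\circ\mA^{q}\LSUB\bigr)\subseteq\mA^{p+q}\LSUB$ is \emph{false} already for $p=q=2$. In the free GDN algebra on even generators $a,b,c,d$, passing to the differential-commutative envelope gives $(a\circ b)\circ(c\circ d)=a\,Db\,Dc\,Dd + a\,c\,Db\,D^{2}d$; the second summand is not in the span of monomials of shape $x\,Dy\,Dz\,Dw$, which is all that $\mA^{4}\LSUB$ contributes in length $4$. So no induction can establish that inclusion. Correspondingly, your inductive step does not close even formally: after applying left supersymmetry to $u\circ(v\circ y)$ with $u\in\mA^{p}\LSUB$ and $v\in\mA^{q-1}\LSUB$, the term $(v\circ u)\circ y$ only lands in $\mA^{p+1}\LSUB$ via the ideal property, and the middle term $v\circ(u\circ y)\in\mA^{q-1}\LSUB\circ\mA^{p+1}\LSUB$ has the same total $p+q$ with second index $p+1$, so neither induction on $q$ nor on $p+q$ applies.

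What is actually needed (and what the paper proves) is only the weaker inclusion $\bigl(\mA^{i+1}\LSUB\circ\mA^{n-i+1}\LSUB\bigr)\subseteq\mA^{n+1}\LSUB$, i.e.\ $\mA^{p}\LSUB\circ\mA^{q}\LSUB\subseteq\mA^{p+q-1}\LSUB$ for $p\geq 2$, and the tool is \emph{right supercommutativity}, not left supersymmetry. Writing a generator of $\mA^{i+1}\LSUB$ as $\lnormed{a_1}{a_{i+1}}$ and using right supercommutativity to permute the last $i+1$ slots of $[\,a_1,\ldots,a_{i+1},w\,]\LSUB$, one obtains $\pm[\,a_1,w,a_2,\ldots,a_{i+1}\,]\LSUB$ for $w\in\mA^{n-i+1}\LSUB$; then $a_1\circ w\in\mA^{n-i+1}\LSUB$ by the ideal property, and composing on the right with the remaining $i$ elements lands in $\mA^{n+1}\LSUB$. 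Left supersymmetry alone does not suffice here.
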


\begin{proof}
We first use induction on~$n$ to show that~$\mA^{n}\LSUB$ forms an ideal of~$\mA$. For~$n=1$, it is clear.
For~$\nn\geq 2$,  we have~$$(\mA^{\nn}\LSUB\circ \mA)= \mA^{\nn+1}\LSUB \subseteq \mA^{\nn}\LSUB,$$
and by induction hypothesis, we also have
$$(\mA \circ \mA^{\nn}\LSUB)\subseteq ((\mA \circ \mA^{\nn-1}\LSUB)\circ \mA)
+(\mA^{\nn-1}\LSUB\circ (\mA\circ \mA))+((\mA^{\nn-1}\LSUB\circ \mA)\circ \mA) \subseteq \mA^{\nn}\LSUB.$$

Now we use induction on~$\nn$ to show~$ (\mA ^{2})^{n}\subseteq \mA^{n+1}\LSUB$. For~$n=1$, it is clear.
For~$n\geq 2$, we obtain
\begin{multline*}
(\mA^{2})^{\nn}=\sum_{1\leq i \leq \nn-1}((\mA^{2})^{i}\circ (\mA^{2})^{\nn-i})
\subseteq \sum_{1\leq i\leq \nn-1}(\mA^{i+1}\LSUB\circ \mA^{\nn-i+1}\LSUB)\\
\subseteq\sum_{1\leq i\leq \nn-1}[\mA,\mA^{\nn-i+1}\LSUB ,\underbrace{\mA\wdots\mA}_{ \mbox{$i$ times}}]\LSUB
\subseteq\sum_{1\leq i\leq \nn-1}[\mA^{\nn-i+1}\LSUB ,\underbrace{\mA\wdots\mA}_{ \mbox{$i$ times}}]\LSUB
= \mA^{\nn+1}\LSUB.
\end{multline*}
The proof is completed.
\end{proof}

We want to show that, under certain conditions, every right-nil GDN superalgebra of bounded index is right nilpotent. And the main difficulty lies in how to deal with the space~$[\mA_0, \mA_1, \mA_1\wdots \mA_1]\LSUB$.  The idea is to ``split"~$\mA_1$ in the following sense.

\begin{lemm}\label{make 01}
Let~$\mA=\mA_0\oplus \mA_1$ be a GDN superalgebra generated by~$\XX=\XX_0\cup \XX_1$. Then for every integer~$q\geq 1$, we have
$$
[\mA_0,\underbrace{\mA_1\wdots\mA_1}_{ \mbox{ $q$ times}}]\LSUB
 \subseteq
  \sum_{2t+m+p=q,t,m\geq 0, p\in \{1,2\}}
 [\mA_0, \underbrace{\mA_0\wdots \mA_0}_{ \mbox{ $t$ times }}, \underbrace{kX_1\wdots kX_1}_{ \mbox{ $m$ times }},\underbrace{\mA_1\wdots \mA_1}_{\mbox{ $p$ times }}]\LSUB,
$$
where~$\kk\XX_1$ is the subspace of~$\mA_1$ spanned by~$\XX_1$.
\end{lemm}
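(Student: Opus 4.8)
The plan is to reduce everything to monomials and then to run an induction on $q$ in which the left supersymmetry and the right supercommutativity do all the work. First I would note that $\mA_0$ and $\mA_1$ are spanned, respectively, by the monomials in $\XX$ of parity $0$ and of parity $1$; hence, by multilinearity of the left-normed bracketing, it suffices to show $[a_0,v_1\wdots v_q]\LSUB$ lies in the right-hand side whenever $a_0$ is a monomial in $\mA_0$ and $v_1\wdots v_q$ are monomials in $\mA_1$. Equivalently --- since $\mA$ is a homomorphic image of $\fns\XX$ under a parity-preserving epimorphism carrying $\XX$ onto $\XX$, left-normed brackets onto left-normed brackets and $\kk\XX_1$ onto $\kk\XX_1$ --- one may simply assume $\mA=\fns\XX$. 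Because $\mA$ is generated by $\XX$ we have $\mA=\kk\XX+\mA\circ\mA$, so $\mA_1=\kk\XX_1+(\mA_0\circ\mA_1)+(\mA_1\circ\mA_0)$; concretely, a monomial $v\in\mA_1$ is either a letter of $\XX_1$, hence lies in $\kk\XX_1$, or splits at its top node as $a\circ w$ or $w\circ a$ with $a\in\mA_0$, $w\in\mA_1$ and $\ell(a),\ell(w)<\ell(v)$.

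I would then prove, by induction primarily on $q$ and secondarily on the total length $\ell(v_1)+\cdots+\ell(v_q)$ of the odd slots, the slightly stronger statement that permits extra even slots: for every $c\geq 0$ and every $q\geq 1$,
$$[\mA_0,\underbrace{\mA_0\wdots\mA_0}_{c},\underbrace{\mA_1\wdots\mA_1}_{q}]\LSUB\ \subseteq\ \sum_{2t+m+p=q,\ t,m\geq 0,\ p\in\{1,2\}}[\mA_0,\underbrace{\mA_0\wdots\mA_0}_{c+t},\underbrace{\kk\XX_1\wdots\kk\XX_1}_{m},\underbrace{\mA_1\wdots\mA_1}_{p}]\LSUB,$$
whose case $c=0$ is the lemma ($c$ being a parameter, not an induction variable). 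For $q\leq 2$ the left-hand side is itself one of the summands on the right (take $t=m=0$, $p=q$), so there is nothing to prove. For $q\geq 3$, right supercommutativity permutes all slots after the first, so I single out one odd slot $v_1$ and split it as above. If $v_1\in\kk\XX_1$, I move it past the remaining odd slots into a $\kk\XX_1$-position and apply the induction hypothesis (with $q-1$ in place of $q$) to the left-normed bracket of $a_0$, the even slots and $v_2\wdots v_q$. If $v_1=a\circ w$ with $a\in\mA_0$ and $w\in\mA_1$, let $u$ be the left-normed product of the leading even slots and expand $u\circ(a\circ w)$ by left supersymmetry (using $\D a=0$):
$$u\circ(a\circ w)=(u\circ a)\circ w+a\circ(u\circ w)-(a\circ u)\circ w.$$
Putting this back into $[a_0,v_1\wdots v_q]\LSUB$ and using $((x\circ y)\circ z)\circ v_2\circ\cdots=[x,y,z,v_2\wdots v_q]\LSUB$, the first and third resulting summands are again left-normed products with even leading slot, with exactly $q$ odd slots, and with $v_1$ replaced by the strictly shorter odd monomial $w$ (the even factor $a$ being absorbed into the even part of the product); their secondary parameter has strictly decreased, so the induction hypothesis applies.

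The step I expect to be the main obstacle is the middle summand $a\circ(u\circ w)=[a,(u\circ w),v_2\wdots v_q]\LSUB$: here the original leading even element $u$ has been pushed \emph{inside} an odd slot --- it becomes $u\circ w\in\mA_1$ --- while a piece of $v_1$ has become the new leading even slot, so a priori the secondary parameter need not drop; and the symmetric case $v_1=w\circ a$ likewise produces terms whose leading slot is a priori odd. The remedy I have in mind is to handle these by further, carefully staged applications of left supersymmetry that pull the offending even factor back out, iterating the top-node split of $v_1$ down to the level where the relevant sub-monomial is a single letter of $\XX_1$, and to refine the well-founded order accordingly (weighing the length of the leading even slot against that of the odd slots) so that each term produced is genuinely smaller. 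Throughout, right supercommutativity is used only to normalize the order of the non-leading slots, and the characteristic-$0$ assumption of this section plays no role in this lemma.
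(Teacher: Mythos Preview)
Your reduction to monomials, your base cases $q\le 2$, and your treatment of the case $v_1\in kX_1$ are fine. The gap is exactly where you flagged it: the middle summand $[a,(u\circ w),v_2\wdots v_q]\LSUB$. Your proposed remedy of ``refining the well-founded order'' cannot work as stated. In that term the previous leading even block $u$ has been swallowed into the first odd slot, so the total odd length jumps from $\ell(v_1)+\cdots+\ell(v_q)$ to $\ell(u)+\ell(w)+\ell(v_2)+\cdots+\ell(v_q)$; meanwhile $\ell(a)+\ell(u\circ w)=\ell(u)+\ell(v_1)$, so the sum (leading even)$+$(first odd) is unchanged, and the parameters $q$ and $c$ give no help either (from $c=0$ you stay at $c=0$). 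No reweighting of these quantities produces a strict decrease.

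The paper's proof supplies the missing idea and organises the induction differently. It first isolates the case $q=3$ and proves
\[
[\mA_0,\mA_1,\mA_1,\mA_1]\LSUB\subseteq[\mA_0,\mA_1,\mA_0]\LSUB+[\mA_0,\mA_1,\mA_1,\kk\XX_1]\LSUB
\]
by induction on $\ell(\mu_1)$ alone. When $\mu_1=\mu_{11}\circ\mu_{12}$ with $\mu_{12}$ odd, the troublesome middle term $[\mu_{11},(\mu\circ\mu_{12}),\mu_2,\mu_3]\LSUB$ is expanded \emph{a second time} by left supersymmetry (now at its first slot), and after right supercommutativity every resulting summand either lies in $[\mA_0,\mA_1,\mA_0]\LSUB$ outright or has the form $[\mu,\mu_{12},(\mu_{11}\circ\mu_2),\mu_3]\LSUB$, whose first odd slot $\mu_{12}$ is a proper submonomial of $\mu_1$, so the induction on $\ell(\mu_1)$ applies. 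The case $\mu_{12}$ even is easier and lands directly in $[\mA_0,\mA_1,\mA_0]\LSUB$. Once $q=3$ is in hand, the general step $q\ge 4$ is a clean recursion: apply the $q=3$ inclusion to the last three odd slots to get
\[
[\mA_0,\mA_1^{\,q}]\LSUB\subseteq[\mA_0,\mA_1^{\,q-2},\mA_0]\LSUB+[\mA_0,\mA_1^{\,q-1},\kk\XX_1]\LSUB,
\]
and then invoke the induction hypothesis on $q-2$ and $q-1$. Your strengthened statement with the extra even parameter $c$ is in fact unnecessary once the argument is structured this way, since right supercommutativity lets one shuffle the accumulated $\mA_0$'s and $\kk\XX_1$'s into place at the end.
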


\begin{proof}
We use induction on~$q$. For~$q\leq 2$, it is clear.
For~$q=3$, we shall show that
$$
[\mA_0, \mA_1,\mA_1,\mA_1]\LSUB \subseteq [\mA_0,\mA_1,\mA_0]\LSUB
+[\mA_0, \mA_1,\mA_1,\kk\XX_1]\LSUB.
$$
It is enough to show that, for every term~$\mu$ over~$\XX$ of parity~0, for all terms~$\mu_1$, $\mu_2$ and~$\mu_3$ over~$\XX$ of parity~1, we have
$$
[\mu, \mu_1 , \mu_2, \mu_3]\LSUB \in [\mA_0,\mA_1,\mA_0]\LSUB+[\mA_0,\mA_1,\mA_1,\kk\XX_1]\LSUB.
$$
We use induction on~$\ell(\mu_1)$. For~$\ell(\mu_1)=1$, the claim follows by right supercommutativity.
For~$\ell(\mu_1)>1$, suppose that~$\mu_1=(\mu_{11}\circ \mu_{12})$.   If~$\mu_{12}$ lies in~$\mA_1$, then~$\mu_{11}$ lies in~$\mA_0$, and by induction hypothesis, we have
$[\mu,\mu_{12}, (\mu_{11}\circ \mu_2),\mu_3]\LSUB
\in [\mA_0,\mA_1,\mA_0]\LSUB+[\mA_0,\mA_1,\mA_1,kX_1]\LSUB$.
Therefore, we obtain
\begin{multline*}
[\mu, \mu_1 , \mu_2, \mu_3]\LSUB
=[\mu, (\mu_{11}\circ \mu_{12}), \mu_2, \mu_3]\LSUB\\
=[\mu, \mu_{11}, \mu_{12},\mu_2, \mu_3]\LSUB-[\mu_{11},\mu,\mu_{12},\mu_2,\mu_3]\LSUB
+[\mu_{11}, (\mu \circ \mu_{12}), \mu_2,\mu_3]\LSUB\\
 =[\mu, \mu_{12},\mu_2,\mu_3,\mu_{11}]\LSUB-[\mu_{11},\mu_{12}, \mu_2,\mu_3, \mu]\LSUB
+[\mu_{11},(\mu \circ \mu_{12}),\mu_2,\mu_3]\LSUB\\
 =[\mu, \mu_{12},\mu_2,\mu_3,\mu_{11}]\LSUB-[\mu_{11},\mu_{12}, \mu_2,\mu_3, \mu]\LSUB
+[\mu_{11},((\mu \circ \mu_{12})\circ\mu_2), \mu_3]\LSUB\\
+[(\mu\circ\mu_{12}),\mu_{11},\mu_2,\mu_3]\LSUB
-[(\mu\circ\mu_{12}),(\mu_{11}\circ \mu_2),\mu_3]\LSUB\\
 =[((\mu\circ \mu_{12})\circ\mu_2),\mu_3,\mu_{11}]\LSUB-[((\mu_{11}\circ\mu_{12})\circ \mu_2),\mu_3, \mu]\LSUB\\
+[\mu_{11}, \mu_3,((\mu \circ \mu_{12})\circ\mu_2)]\LSUB
+[((\mu\circ \mu_{12})\circ\mu_2),\mu_3,\mu_{11}]\LSUB\\
-[\mu,\mu_{12},(\mu_{11}\circ \mu_2),\mu_3]\LSUB
\in [\mA_0, \mA_1,\mA_0]\LSUB+[\mA_0,\mA_1,\mA_1,kX_1]\LSUB.
\end{multline*}
 If~$\mu_{12}$ lies in~$\mA_0$, then~$\mu_{11}$ lies in~$\mA_1$, and we obtain
\begin{multline*}
[\mu, \mu_1 , \mu_2, \mu_3]\LSUB
=[\mu, (\mu_{11}\circ \mu_{12}), \mu_2, \mu_3]\LSUB\\
=[\mu, \mu_{11}, \mu_{12},\mu_2, \mu_3]\LSUB-[\mu_{11},\mu,\mu_{12},\mu_2,\mu_3]\LSUB
+[\mu_{11}, (\mu \circ \mu_{12}), \mu_2,\mu_3]\LSUB\\
=[((\mu\circ \mu_{11})\circ \mu_2),  \mu_3 ,\mu_{12}]\LSUB+[(\mu_{11} \circ \mu_2),\mu_3,(\mu \circ \mu_{12})]\LSUB
-[((\mu_{11}\circ \mu)\circ \mu_2), \mu_3,\mu_{12}]\LSUB.
\end{multline*}
So~$[\mu, \mu_1 , \mu_2, \mu_3]\LSUB$ lies in~$[\mA_0,\mA_1,\mA_0]\LSUB$.

For~$q\geq 4$, by right supercommutativity and the case~$\qq=3$, we have
\begin{multline*}
[\mA_0, \underbrace{\mA_1 \wdots \mA_1}_{\mbox{$q$ times}}]\LSUB
\subseteq
[\mA_0, \underbrace{\mA_1 \wdots \mA_1,}_{\mbox{$q-2$ times}}\mA_0]\LSUB
+
[\mA_0, \underbrace{\mA_1 \wdots \mA_1,}_{\mbox{$q-1$ times}} kX_1]\LSUB\\
\subseteq
\sum_{2t+m+p=q-2,t,m\geq 0, p\in \{1,2\}}[\mA_0, \underbrace{\mA_0 \wdots \mA_0, }_{\mbox{$t$ times}} \underbrace{kX_1 \wdots kX_1}_{\mbox{$m$ times}}
,\underbrace{\mA_1 , \mA_1,}_{\mbox{$p$ times}}\mA_0]\LSUB\\
+ \sum_{2t+m+p=q-1,t,m\geq 0, p\in \{1,2\}}
[\mA_0, \underbrace{\mA_0 \wdots \mA_0, }_{\mbox{$t$ times}} \underbrace{kX_1 \wdots kX_1}_{\mbox{$m$ times}}
,\underbrace{\mA_1  , \mA_1}_{\mbox{$p$ times}}, kX_1]\LSUB\\
\subseteq \sum_{2t+m+p=q,t,m\geq 0, p\in \{1,2\}}
[\mA_0, \underbrace{\mA_0\wdots \mA_0 ,}_{\mbox{$t$ times}} \underbrace{kX_1 \wdots kX_1,}_{\mbox{$m$ times}}\underbrace{\mA_1  , \mA_1}_{\mbox{$p$ times}}]\LSUB.
\end{multline*}
The claim follows.
\end{proof}

We are now in a position to prove the following Engel theorem.

\begin{theorem}\label{Engel-theorem}
Let~$\mA=\mA_0\oplus \mA_1$ be a GDN superalgebra generated by~$\XX=\XX_0\cup \XX_1$ over a field of characteristic~0,
 where every element of the set~$\XX_0$ is of parity~0 and every element of the set~$\XX_1$ is of parity~1. If~$\XX_1$ is a finite set and the even part~$\mA_0$ is right-nil of bounded index~$\nn$, then~$\mA$ is right nilpotent,
 in particular, the ideal~$\mA^2$ of~$\mA$ is nilpotent.
\end{theorem}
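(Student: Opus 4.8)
The plan is to reduce the statement to the assertion that $\mA$ is right nilpotent, i.e.\ that $\mA^{N}\LSUB=0$ for some~$N$; then Lemma~\ref{A2 in A} gives $(\mA^{2})^{N-1}\subseteq\mA^{N}\LSUB=0$, so the ideal $\mA^{2}$ is nilpotent. Since $\mA=\mA_{0}\oplus\mA_{1}$ is generated by~$\XX$, expanding by multilinearity shows that $\mA^{N}\LSUB$ is a sum of subspaces $[\mB_{1}\wdots\mB_{N}]\LSUB$ with each $\mB_{i}\in\{\mA_{0},\mA_{1}\}$. Right supercommutativity permutes $\mB_{2}\wdots\mB_{N}$ up to sign; combining this with $\mA_{1}\circ\mA_{1}\subseteq\mA_{0}$, every such product either has all factors in $\mA_{0}$ (hence lies in $(\mA_{0})^{N}\LSUB$, which is $0$ for $N\geq n+1$ by Lemma~\ref{engel theorem}), or, possibly after replacing a pair of leading odd factors by the single even factor $\mA_{1}\circ\mA_{1}$, becomes a left-normed product of $N$ or $N-1$ subspaces whose first factor lies in~$\mA_{0}$, or has the form $[\mA_{1},\underbrace{\mA_{0}\wdots\mA_{0}}_{k}]\LSUB$ (a single odd factor, occurring first). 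So it suffices to bound, separately, the left-normed products whose first factor lies in~$\mA_{0}$ and the products $[\mA_{1},\underbrace{\mA_{0}\wdots\mA_{0}}_{k}]\LSUB$.

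Next I would handle the left-normed products whose first factor lies in~$\mA_{0}$. If such a product has $n+1$ or more even factors, then moving all odd factors to the right by right supercommutativity exhibits it inside $(\mA_{0})^{n+1}\LSUB=0$; so we may assume it has at most~$n$ even factors, say $s+1$ of them with $0\le s\le n-1$, and $q$ odd factors. Collecting the odd factors immediately after the leading even factor and applying Lemma~\ref{make 01}, and then collecting all even factors together, rewrites the product as a linear combination of terms
\[
[\,\underbrace{\mA_{0}\wdots\mA_{0}}_{t+s+1},\ \underbrace{\kk\XX_{1}\wdots\kk\XX_{1}}_{m},\ \underbrace{\mA_{1}\wdots\mA_{1}}_{p}\,]\LSUB,\qquad p\in\{1,2\},\quad 2t+m+p=q.
\]
By Lemma~\ref{engel theorem} such a term vanishes unless $t+s\leq n-1$; and, expanding each $\kk\XX_{1}$-factor over the finite set~$\XX_{1}$, if $m>\sharp\XX_{1}$ then two of these factors carry the same generator of parity~$1$, so by right supercommutativity the term equals its own negative and hence is zero as $\chart(\kk)\neq 2$. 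Therefore $q=2t+m+p\leq 2(n-1-s)+\sharp\XX_{1}+2$ and $N=(s+1)+q\leq 2n+\sharp\XX_{1}+1$; once $N$ exceeds this bound, every such product is~$0$.

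It remains to bound $[\mA_{1},\underbrace{\mA_{0}\wdots\mA_{0}}_{k}]\LSUB$ for $k$ large; this is the genuinely new difficulty here, absent from the classical GDN case treated in~\cite{engel novikov}, and I expect it to be the main obstacle. A useful preliminary observation is that $[\mA_{1},\underbrace{\mA_{0}\wdots\mA_{0}}_{k}]\LSUB\circ\mA_{1}=0$ for $k\geq n$: moving the trailing odd factor to the front by right supercommutativity puts such a product into $[\mA_{1}\circ\mA_{1},\underbrace{\mA_{0}\wdots\mA_{0}}_{k}]\LSUB\subseteq(\mA_{0})^{k+1}\LSUB=0$. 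What then remains is to prove that $[\mA_{1},\underbrace{\mA_{0}\wdots\mA_{0}}_{k}]\LSUB=0$ for $k$ large. I would attack this by a symmetrization/inclusion--exclusion argument extending the proof of Lemma~\ref{engel theorem} (where $\chart(\kk)=0$ is essential), combined with the identities relating left and right multiplications that follow from left supersymmetry and right supercommutativity and with the vanishing just recorded, so as to strip off the even factors one at a time; an alternative route is to pass, via the linear basis of GDN supertableaux (Theorem~\ref{sup-basis}) and the embedding of~$\mA$ into its universal enveloping \adsa\ constructed in Section~\ref{basis-and-pbw-sec}, to a computation in $\kk\DX$, where right multiplication by an even element is multiplication by a derivative and the right-nilpotency of~$\mA_{0}$ becomes an explicit family of relations that kills the relevant products of derivatives. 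Once $[\mA_{1},\underbrace{\mA_{0}\wdots\mA_{0}}_{k}]\LSUB=0$ is known for all large~$k$, the two previous steps give $\mA^{N}\LSUB=0$ for $N$ large enough, so $\mA$ is right nilpotent and, by Lemma~\ref{A2 in A}, the ideal $\mA^{2}$ is nilpotent.
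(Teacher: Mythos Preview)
Your overall architecture matches the paper: reduce to right nilpotency, decompose $\mA^{N}\LSUB$ into left-normed products with factors in $\{\mA_0,\mA_1\}$, handle the products beginning in~$\mA_0$ via Lemma~\ref{make 01} together with $(\mA_0)^{n+1}\LSUB=0$ and the pigeonhole on~$\XX_1$, and isolate $[\mA_1,\mA_0,\ldots,\mA_0]\LSUB$ as the residual case. The bookkeeping for the first part is essentially the paper's, just organized a bit differently.

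The gap is precisely where you say ``I expect it to be the main obstacle'': you do not actually prove that $[\mA_1,\underbrace{\mA_0,\ldots,\mA_0}_{k}]\LSUB=0$ for large~$k$. Your primary suggestion, a symmetrization/inclusion--exclusion argument \`a la Lemma~\ref{engel theorem}, is a red herring here: the right-nil hypothesis gives $x^n\LSUB=0$ only for $x\in\mA_0$, and since by right supercommutativity all permutations of the $\mA_0$-factors in $[\,y,x_1,\ldots,x_k\,]\LSUB$ already coincide, symmetrizing over them yields nothing new. Your alternative route through the embedding into $\kk\DX$ is not wrong in principle but is heavy machinery for what turns out to be elementary.

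What the paper does is a short structural induction using only left supersymmetry and right supercommutativity, with no characteristic-zero trick needed at this step. One proves, for every $q\geq 3$,
\[
[\mA_1,\underbrace{\mA_0,\ldots,\mA_0}_{q-1}]\LSUB
\ \subseteq\
(\mA_1\circ(\mA_0)^{q-1}\LSUB)\;+\;((\mA_0)^{q-1}\LSUB\circ\mA_1)\;+\;((\mA_0)^{q-2}\LSUB\circ(\mA_1\circ\mA_0)).
\]
The base case $q=3$ is one application of left supersymmetry to $((y\circ x_1)\circ x_2)$ followed by right supercommutativity on the term $((x_1\circ y)\circ x_2)$. The inductive step multiplies the three summands on the right by~$\mA_0$ and again uses left supersymmetry only on the first summand. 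Once $(\mA_0)^{q-2}\LSUB=0$, the right-hand side vanishes and the case is done. With this lemma in hand your proof closes exactly as you intended; without it the argument is incomplete.
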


\begin{proof}
By Lemma \ref{engel theorem}, we have  $(\mA_0)^{n+1}\LSUB=0$. Let $n_0= \max(\sharp(X_1), n+1)$ and let~${q=3n_0+1}$.
We shall show that~$\mA^{q}\LSUB=0$. It is enough to show the following two claims:
$$[\mA_0, \underbrace{\mA_1 \wdots
\mA_1}_{\mbox{$j$ times}}
, \underbrace{\mA_0 \wdots \mA_0 }_{\mbox{$q-2-j$ times}}]\LSUB=0\  \mbox{ for every integer } j \mbox{ such that } 0\leq j\leq q-2,$$
 and
$$[\mA_1, \underbrace{\mA_1 \wdots \mA_1}_{\mbox{$j$ times}}
, \underbrace{\mA_0 \wdots \mA_0 }_{\mbox{$q-1-j$ times}}]\LSUB=0 \ \mbox{ for every integer } j \mbox{ such that }  0\leq j\leq q-1.$$

For the first claim, if $j=0$, then by Lemma \ref{engel theorem}, we get~$(\mA_0)^{q-1}\LSUB=0 $.
For every integer~$\jj$ such that~$1\leq j\leq q-2$, by Lemma~\ref{make 01}, we have
\begin{eqnarray*}
&&[\mA_0, \underbrace{\mA_1 \wdots
\mA_1}_{\mbox{ $j$ times}}
, \underbrace{\mA_0 \wdots \mA_0 }_{ \mbox{$q-2-j$ times }}]\LSUB  \\
&\subseteq &\sum_{2t+m+p=j,t,m\geq 0, p\in \{1,2\}}
[\mA_0, \underbrace{\mA_0 \wdots \mA_0, }_{ \mbox{$t+q-2-j$  times }}
 \underbrace{kX_1 \wdots kX_1,}_{ \mbox{ $m$ times }}
\underbrace{\mA_1  , \mA_1}_{ \mbox{ $p$ times }}]\LSUB.
\end{eqnarray*}
If~$m>n_0, $ then we obtain
$$ [\mA_0, \underbrace{\mA_0 \wdots \mA_0, }_{ \mbox{$t+q-2-j$  times }}
\underbrace{kX_1 \wdots kX_1,}_{ \mbox{ $m$ times }}
\underbrace{\mA_1  , \mA_1}_{ \mbox{ $p$ times }}]\LSUB=0.$$
If~$m\leq n_0$, then we obtain~$t=\frac{1}{2}(j-m-p)$ and
\begin{multline*}
t+q-2-j
=\frac{1}{2}(j-m-p)+q-j-2\\
=\frac{1}{2}(-m-p+q)+\frac{1}{2}(q-j)-2
\geq \frac{1}{2}(-n_0-2+3n_0+1)+\frac{1}{2}\times 2 -2
=n_0-\frac{3}{2}.
\end{multline*}
Since~$t+q-2-j$ is an integer, we have~$t+q-2-j\geq n_0-1$.
So we get
$$ [\mA_0, \underbrace{\mA_0 \wdots \mA_0, }_{ \mbox{$t+q-2-j$  times }}
\underbrace{kX_1 \wdots kX_1,}_{ \mbox{ $m$ times }}
\underbrace{\mA_1  , \mA_1}_{ \mbox{ $p$ times }}]\LSUB=0.$$
The first claim follows.

 For the second claim, if~$j\geq 1$,  then by the first claim, we get
$$[\mA_1, \underbrace{\mA_1 \wdots \mA_1}_{\mbox{ $j$ times}}
, \underbrace{\mA_0 \wdots \mA_0 }_{\mbox{$q-1-j$ times}}]\LSUB
\subseteqq [\mA_0, \underbrace{\mA_1 \wdots \mA_1,}_{\mbox{$j-1$ times}}\
\underbrace{\mA_0  \wdots \mA_0 }_{ \mbox{$q-2-(j-1)$ times}}]\LSUB=0.$$
 If~$j=0$, then we first use induction on~$\qq$ to show that, for every~$\qq\geq 3$, we have
$$
[\mA_1, \underbrace{\mA_0 , \cdots , \mA_0 }_{ \mbox{$q-1$ times }}]\LSUB
\subseteq (\mA_1\circ {(\mA_0)}^{q-1}\LSUB)+({(\mA_0)}^{q-1}\LSUB\circ \mA_1)+({(\mA_0)}^{q-2}\LSUB\circ (\mA_1\circ\mA_0)).
$$
For~$\qq=3$, by left supersymmetry and right supercommutativity, we get
\begin{multline*}
[\mA_1, \mA_0, \mA_0 ]\LSUB
\subseteq (\mA_1\circ (\mA_0 \circ \mA_0))+((\mA_0 \circ \mA_1)\circ \mA_0)+(\mA_0 \circ (\mA_1\circ\mA_0))\\
\subseteq (\mA_1\circ (\mA_0 \circ \mA_0))+((\mA_0 \circ \mA_0)\circ \mA_1)+(\mA_0 \circ (\mA_1\circ\mA_0)).
\end{multline*}
 For~$\qq>3$, by induction hypothesis, we get
 \begin{multline*}
  [\mA_1, \underbrace{\mA_0 , \cdots , \mA_0 }_{ \mbox{$q-1$ times }}]\LSUB
  =([\mA_1, \underbrace{\mA_0 , \cdots , \mA_0 }_{ \mbox{$q-2$ times }}]\LSUB\circ\mA_0)\\
\subseteq ((\mA_1\circ {(\mA_0)}^{q-2}\LSUB)\circ \mA_0)+(({(\mA_0)}^{q-2}\LSUB\circ \mA_1)\circ\mA_0)+(({(\mA_0)}^{q-3}\LSUB\circ (\mA_1\circ\mA_0))\circ\mA_0) \\
\subseteq (\mA_1\circ ({(\mA_0)}^{q-2}\LSUB\circ \mA_0))+(({(\mA_0)}^{q-2}\LSUB\circ \mA_1)\circ\mA_0)+({(\mA_0)}^{q-2}\LSUB\circ (\mA_1\circ\mA_0))\\
\subseteq (\mA_1\circ {(\mA_0)}^{q-1}\LSUB)+({(\mA_0)}^{q-1}\LSUB\circ \mA_1)+({(\mA_0)}^{q-2}\LSUB\circ (\mA_1\circ\mA_0)).
\end{multline*}
  Finally, since~${(\mA_0)}^{q-2}\LSUB=0$, the second claim follows.
\end{proof}

\end{document}